\def\hmath$#1${\texorpdfstring{{\rmfamily\textit{#1}}}{#1}}
\setlist{labelindent=1pt,itemsep=.5em}
\setlist[itemize]{leftmargin=1.2cm}
\setlist[enumerate]{itemindent=0em,leftmargin=1.2cm}
\setlist[enumerate,1]{label=\upshape{(\roman*)}}
\setlist[enumerate,2]{label=\upshape{(\alph*)}}
\newcommand{\email}[1]{%
    \normalsize\href{mailto:#1}{\color{black}{#1} }}
\newcommand{\subjclass}[2][2020]{%
  \let\@oldtitle\@title%
  \gdef\@title{\@oldtitle\footnotetext{#1 \emph{Mathematics subject classification}: #2}}%
}
\newcommand{\keywords}[1]{%
  \let\@@oldtitle\@title%
  \gdef\@title{\@@oldtitle\footnotetext{\emph{Keywords}: #1}}%
}
\newtheorem{thm}{Theorem}[section]
\newtheorem{cor}[thm]{Corollary}
\newtheorem{lem}[thm]{Lemma}
\newtheorem{prop}[thm]{Proposition}
\theoremstyle{definition}
\newtheorem{defn}[thm]{Definition}
\theoremstyle{remark}
\newtheorem{rmk}[thm]{Remark}
\theoremstyle{remark}
\newtheorem{ex}[thm]{Example}
\numberwithin{equation}{section}
\title{Averaging operators on $q$-deformed Witt and $q$-deformed $W(2,2)$ algebras}
\author[1,2]{Ismail Laraiedh}
\author[3]{Sergei Silvestrov}
\affil[1]{\Affilfont Departement of Mathematics, Faculty of Sciences,
\authorcr \Affilfont Sfax University, Box 1171, 3000 Sfax, Tunisia
\authorcr \Affilfont
\email{ismail.laraiedh@gmail.com}}
\affil[2]{Departement of Mathematics,
\authorcr \Affilfont College of Sciences and Humanities Al Quwaiiyah,
\authorcr \Affilfont Shaqra University, Kingdom of Saudi Arabia
\authorcr \Affilfont
\email{ismail.laraiedh@su.edu.sa}}
\affil[3]{\Affilfont Division of Mathematics and Physics,
\authorcr \Affilfont School of Education, Culture and Communication,
\authorcr \Affilfont M\"{a}lardalen University, Box 883, 72123 V{\"a}ster{\aa}s, Sweden
\authorcr \Affilfont
\email{sergei.silvestrov@mdu.se}}
\subjclass[2020]{17B61, 17D30, 17B63, 16D20}
\keywords{Hom-Lie algebra, averaging operator, $q$-deformed Witt Hom-algebra, $q$-deformed $W(2,2)$ Hom-algebra}
\date{}
\begin{document}
\maketitle
\begin{abstract}
The aim of this paper is to give some constructions results of
averaging operators on Hom-Lie algebras. The homogeneous averaging operators on $q$-deformed Witt and $q$-deformed $W(2,2)$ Hom-algebras are classified. As applications, the induced Hom-Leibniz algebra structures are obtained and their multiplicativity conditions are also given.
\end{abstract}
\section{Introduction}
The investigations of various quantum deformations or $q$-deformations of Lie algebras began a period of rapid expansion in 1980's stimulated by introduction of quantum groups motivated by applications to the quantum Yang-Baxter equation, quantum inverse scattering methods and constructions of the quantum deformations of universal enveloping algebras of semi-simple Lie algebras. Various $q$-deformed Lie algebras have appeared in physical contexts such as string theory, vertex models in conformal field theory, quantum mechanics and quantum field theory in the context of deformations of infinite-dimensional algebras, primarily the Heisenberg algebras, oscillator algebras and Witt and Virasoro algebras. In \cite{AizawaSaito,ChaiElinPop,ChaiIsLukPopPresn,ChaiKuLuk,ChaiPopPres,CurtrZachos1,DamKu,DaskaloyannisGendefVir,Hu,Kassel92,LiuKQuantumCentExt,LiuKQCharQuantWittAlg,LiuKQPhDthesis},
it was in particular discovered that in these $q$-deformations of Witt and Visaroro algebras and some related algebras, some interesting $q$-deformations of Jacobi identities, extending Jacobi identity for Lie algebras, are satisfied. This has been one of the initial motivations for the development of general quasi-deformations and discretizations of Lie algebras of vector fields using more general $\sigma$-derivations (twisted derivations) in \cite{HartwigLarssonSilvestrov:defLiealgsderiv}.

Hom-Lie algebras and more general quasi-Hom-Lie algebras were introduced first by Hartwig, Larsson and Silvestrov \cite{HartwigLarssonSilvestrov:defLiealgsderiv}, where the general quasi-deformations and discretizations of Lie algebras of vector fields using more general $\sigma$-derivations (twisted derivations) and a general method for construction of deformations of Witt and Virasoro type algebras based on twisted derivations have been developed, initially motivated by the $q$-deformed Jacobi identities observed for the $q$-deformed algebras in physics, along with $q$-deformed versions of homological algebra and discrete modifications of differential calculi. Hom-Lie algebras, Hom-Lie superalgebras, Hom-Lie color algebras and more general quasi-Lie algebras and color quasi-Lie algebras where introduced first in \cite{LarssonSilv2005:QuasiLieAlg,LarssonSilv:GradedquasiLiealg,SigSilv:CzechJP2006:GradedquasiLiealgWitt}. Quasi-Lie algebras and color quasi-Lie algebras encompass within the same algebraic framework the quasi-deformations and discretizations of Lie algebras of vector fields by $\sigma$-derivations obeying twisted Leibniz rule, and the well-known generalizations of Lie algebras such as color Lie algebras, the natural generalizations of Lie algebras and Lie superalgebras. In quasi-Lie algebras, the skew-symmetry and the Jacobi identity are twisted by deforming twisting linear maps, with the Jacobi identity in quasi-Lie and quasi-Hom-Lie algebras in general containing six twisted triple bracket terms. In Hom-Lie algebras, the bilinear product satisfies the non-twisted skew-symmetry property as in Lie algebras, and the Hom-Lie algebras Jacobi identity has three terms twisted by a single linear map, reducing to the Lie algebras Jacobi identity when the twisting linear map is the identity map. Hom-Lie admissible algebras have been considered first in \cite{ms:homstructure}, where in particular the Hom-associative algebras have been introduced and shown to be Hom-Lie admissible, that is leading to Hom-Lie algebras using commutator map as new product, and in this sense constituting a natural generalization of associative algebras as Lie admissible algebras. Since the pioneering works \cite{HartwigLarssonSilvestrov:defLiealgsderiv,LarssonSilvJA2005:QuasiHomLieCentExt2cocyid,LarssonSilv:GradedquasiLiealg,LarssonSilv2005:QuasiLieAlg,LarssonSilv:QuasidefSl2,ms:homstructure}, Hom-algebra structures expanded into a popular area with increasing number of publications in various directions. Hom-algebra structures of a given type include their classical counterparts and open broad possibilities for deformations, Hom-algebra extensions of cohomological structures and representations, formal deformations of Hom-associative and Hom-Lie algebras, Hom-Lie admissible Hom-coalgebras, Hom-coalgebras, Hom-Hopf algebras \cite{AmmarEjbehiMakhlouf:homdeformation,BenMakh:Hombiliform,ElchingerLundMakhSilv:BracktausigmaderivWittVir,LarssonSilvJA2005:QuasiHomLieCentExt2cocyid,LarssonSilvestrovGLTMPBSpr2009:GenNComplTwistDer,MakhSil:HomHopf,MakhSilv:HomAlgHomCoalg,MakhSilv:HomDeform,
Sheng:homrep,ShengBai2014:homLiebialg,Yau:HomolHom,Yau:EnvLieAlg}.
Hom-Lie algebras, Hom-Lie superalgebras and color Hom-Lie algebras and their $n$-ary generalizations have been further investigated in various aspects for example in \cite{AbdaouiAmmarMakhloufCohhomLiecolalg2015,belhsine,AbramovSilvestrov:3homLiealgsigmaderivINvol,AmmarEjbehiMakhlouf:homdeformation,AmmarMabroukMakhloufCohomnaryHNLalg2011,AmmarMakhloufHomLieSupAlg2010,AmmarMakhloufSaadaoui2013:CohlgHomLiesupqdefWittSup,AmmarMakhloufSilv:TernaryqVirasoroHomNambuLie,ArmakanFarhangdoost:IJGMMP,ArmakanSilv:envelalgcertaintypescolorHomLie,ArmakanSilvFarh:envelopalgcolhomLiealg,ArmakanSilvFarh:exthomLiecoloralg,ArmakanSilv:NondegKillingformsHomLiesuperalg,
akms:ternary,ams:ternary,ArnlindMakhloufSilvnaryHomLieNambuJMP2011,Bakayoko2014:ModulescolorHomPoisson,BakayokoDialo2015:genHomalgebrastr,BakyokoSilvestrov:Homleftsymmetriccolordialgebras,BakyokoSilvestrov:MultiplicnHomLiecoloralg,BakayokoToure2019:genHomalgebrastr,
BenHassineMabroukNcib:ConstrMultiplicnaryhomNambualg,BenHassineChtiouiMabroukNcib:Strcohom3LieRinehartsuperalg,BenMakh:Hombiliform,CaoChen2012:SplitregularhomLiecoloralg,GuanChenSun:HomLieSuperalgebras,KitouniMakhloufSilvestrov,kms:solvnilpnhomlie2020,LarssonSigSilvJGLTA2008:QuasiLiedefFttN,MabroukNcibSilvestrov2020:GenDerRotaBaxterOpsnaryHomNambuSuperalgs,
ms:homstructure,MakhSilv:HomDeform,MakhSil:HomHopf,MakhSilv:HomAlgHomCoalg,Makhlouf2010:ParadigmnonassHomalgHomsuper,MandalMishra:HomGerstenhaberHomLiealgebroids,MishraSilvestrov:SpringerAAS2020HomGerstenhalgsHomLiealgds,RichardSilvJA2008:quasiLiesigderCtpm1,RichardSilvestrovGLTMPBSpr2009:QuasiLieHomLiesigmaderiv,
SigSilv:CzechJP2006:GradedquasiLiealgWitt,Sheng:homrep,ShengBai2014:homLiebialg,ShengChen2013:HomLie2algebras,ShengXiong:LMLA2015:OnHomLiealg,SigSilv:GLTbdSpringer2009,SilvestrovParadigmQLieQhomLie2007,
Yau2009:HomYangBaxterHomLiequasitring,Yau:EnvLieAlg,Yau:HomolHom,Yau:HomBial,Yuan2012:HomLiecoloralgstr,ZhouChenMa:GenDerHomLiesuper,ZhouChenMa:GenDerLieTripSyst,ZhouNiuChen:GhomDerivation,ZhouZhaoZhang:GenDerHomLeibnizalg}.

In the 1930s, the notion of averaging operator was explicitly defined by Kolmogoroff and Kamp\'{e} de F\'{e}riet~\cite{KF,Mil}.  Then G. Birkhoff \cite{Bi} continued its study and showed that a positive bounded projection in the Banach algebra $C(X)$, the algebra of scalar valued continuous functions on a compact Hausdorff space $X$, onto a fixed range space is an idempotent averaging operator. In 1954, S.~T.~C. Moy~\cite{STC} made the connection between averaging operators and conditional expectation. Furthermore, she studied the relationship between integration theory and averaging operators in turbulence theory and probability. Then her results were extended by G.~C. Rota \cite{R1}. During the same period, the idempotent averaging operators on $C_{\infty}(X)$, the algebra of all real valued continuous functions on a locally compact Hausdorff space $X$ that vanish at the infinity, were characterized by J.~L. Kelley \cite{Kel}.

In this century, while averaging operators continued to find many applications in its traditional areas of analysis and applied areas~\cite{Fe}, their algebraic study has been deepened and generalized.
J.~L. Loday~\cite{Lo} defined the diassociative algebra as the enveloping algebra of the Leibniz algebra by analogy with the associative algebra as the enveloping algebra of the Lie algebra.
More precisely, an averaging operator on an algebra $A$ over a field $\mathbb{K}$ is a linear map $P : A \rightarrow A$ satisfying
the averaging relation:
$$P(x)P(y)=P(P(x)y)=P(xP(y)).$$
M. Aguiar in \cite{Ag} showed that a diassociative algebra can be derived from an averaging associative algebra by defining two new operations $x \dashv y := x P(y)$ and $x \vdash y: = P(x)y$. An analogue process gives a Leibniz algebra from an averaging Lie algebra by defining a new operation $\{x,y\} := [P(x),y]$ and derives a (left) permutative algebra from an averaging commutative associative algebra. In general, an averaging operator was defined on any binary operad and this kind of process was systematically studied in~\cite{PBGN} by relating the averaging actions to a special construction of binary operads called duplicators~\cite{GK2,PBGN2}.

The purpose of this paper is to give some constructions results of
averaging operators on Hom-Lie algebras and to classify the homogeneous averaging operators on $q$-deformed Witt and $q$-deformed $W(2,2)$ algebras. Then the induced Leibniz algebra structures are obtained.
Section \ref{section2} contains some necessary important basic notions, notations and examples on $\mathbb{Z}$-graded Hom-Lie algebras which will be used in
next sections and we study the multiplicativity conditions of $q$-deformed Witt and $q$-deformed $W(2,2)$ Hom algebras. Next, we present some useful methods for constructions of averaging operator on  Hom-Lie algebras. In section \ref{section3}, we classify the  homogeneous averaging operators on the $q$-deformed
Witt Hom-algebra $\mathcal{V}^{q}$ and we give the induced Hom-Leibniz algebras
from the averaging operators on the $q$-deformed Witt Hom-algebra $\mathcal{V}^{q}$. In section \ref{section4}, we classify the  homogeneous averaging operators on the $q$-deformed $W(2,2)$ Hom-algebra $\mathcal{W}^{q}$. Also, we give the induced Hom-Leibniz algebras
from the averaging operators on the $q$-deformed $W(2,2)$ Hom-algebra $\mathcal{W}^{q}$.

\section{Constructions of averaging operators on Hom-Lie algebras}\label{section2}
In this section, firstly, we review some important basic notions, notations and examples on $\mathbb{Z}$-graded Hom-Lie algebras which will be used in next sections. Then, we present some useful methods for constructions of averaging operator on Hom Lie algebras.

In this article, all linear spaces are over a field $\mathbb{K}$ of characteristic zero. A linear operator $T:A\mapsto A$ on a $\mathbb{Z}$-graded linear space $A =\bigoplus_{j\in\mathbb{Z}} V_j,$ is said to respect the grading of the linear space $A$ if for any $i\in\mathbb{Z}$ there exists $j\in\mathbb{Z}$ such that $T(A_i)\subseteq A_j$.
The linear operator respecting grading is said to be homogeneous of degree $\deg T \in\mathbb{Z}$ if
$
T(A_i)\subseteq A_{i+\deg T}
$
for all $i\in\mathbb{Z}$, and $T$ is said to be even if $\deg T=0$, that is
$T(A_i)\subseteq A_{i}$ for all for all $i\in\mathbb{Z}$.

\subsection{Hom-algebras, Hom-Lie algebras and multiplicativity}
Hom-algebras in general are triples $(A,[\cdot,\cdot],\alpha)$ consisting of a linear space $A$, bilinear product $[\cdot,\cdot]:A\times A\mapsto A$ and a linear map (linear space homomorphism) $\alpha:A\mapsto A$.
If, moreover, the linear map $\alpha: A\rightarrow A$ is an algebra endomorphism, meaning that it satisfies for all $x,y\in A$ the multiplicativity property
\begin{eqnarray} \label{multipl:homleibnizhomlie}
\alpha([x, y])=[\alpha(x),\alpha(y)],
\end{eqnarray}
then the Hom-Lie algebra is called {\it multiplicative}.
Within specific classes of Hom-algebras, it is important to characterize multiplicative and non-multiplicative Hom-algebras belonging to the class.

A Hom-algebra, $(A,[\cdot,\cdot],\alpha)$ is said to be {\it $\mathbb{Z}$-graded} if the linear space
$A$ is $\mathbb{Z}$-graded,
\begin{equation*}
A =\bigoplus_{j\in\mathbb{Z}}A_j,
\end{equation*}
the bilinear product $[\cdot,\cdot]$ is $\mathbb{Z}$-graded, that is, for all $m,n\in\mathbb{Z}$,
\begin{equation*}
[A _m, A_n]\subseteq A_{m+n},
\end{equation*}
and the linear operator
$\alpha$ respects the $\mathbb{Z}$-grading of the linear space $A$, that is, for any $i\in\mathbb{Z}$ there exists $j\in\mathbb{Z}$ such that $\alpha(A_i)\subseteq A_j$.

In any $\mathbb{Z}$-graded Hom-algebra $(A =\bigoplus_{j\in\mathbb{Z}}A_j,[\cdot,\cdot],\alpha)$, and the following inclusions hold:
\begin{align*}
         & [\alpha(A_m),\alpha(A_n)] \subseteq [A_{m+k},A_{n+k}]\subseteq A_{m+n+2k},\\
         &\alpha([A_m,A_n]) \subseteq \alpha(A_{m+n})\subseteq A_{m+n+k},\\
         & [\alpha(A_m),\alpha(A_n)] \cap \alpha([A_m,A_n]) \subseteq
         A_{m+n+2k} \cap A_{m+n+k} =
         \left\{\begin{array}{ll}
         A_{m+n},& \ \text{if} \ k=0 \\
         0,& \ \text{if} \ k\neq 0
         \end{array},
         \right. \\
         &
         \begin{array}{l}
        \ker([\cdot,\cdot])\cap
         ((\ker(\alpha)\times A) \cup (A\times \ker(\alpha)))\\
        \quad \subseteq M_{A,[\cdot,\cdot],\alpha}=
        \{(x,y)\in A\times A \mid [\alpha(x),\alpha(y)]=\alpha([x,y])
          \} \\
         \end{array}
     \end{align*}

These inclusions directly yield the following handy conditions for checking whether $\mathbb{Z}$-graded Hom-algebras are
multiplicative or non-multiplicative, based on an interaction between
the bilinear product $[\cdot,\cdot]$, the twisting map $\alpha$, the $\mathbb{Z}$-grading of $A$ and elements of its homogeneous subspaces $A_j, j\in\mathbb{Z}$ in the $\mathbb{Z}$-grading direct decomposition.
\begin{thm}
\label{thm:multnonmultcnds}
Let $(A=\displaystyle{\bigoplus_{n\in\mathbb{Z}}} A_n,[\cdot,\cdot],\alpha)$  be a $\mathbb{Z}$-graded
Hom-algebra where $\alpha$ is a linear operator homogeneous of degree $\deg \alpha = k\in\mathbb{Z}$.
\begin{enumerate}
\item
\label{thm:i:multnonmultcnds}
The Hom-algebra $(A,[\cdot,\cdot],\alpha)$  is not multiplicative, if and only if
\begin{equation*}
\exists\ m,n\in \mathbb{Z}, x_m\in A_m, x_n\in A_n:
[\alpha(x_m),\alpha(x_n)]\neq \alpha([x_m,x_n]).
\end{equation*}
The Hom-algebra $(A,[\cdot,\cdot],\alpha)$ is multiplicative if and only if
\begin{equation*}
\forall \ m,n\in \mathbb{Z}, x_m\in A_m, x_n\in A_n:
[\alpha(x_m),\alpha(x_n)]=\alpha([x_m,x_n]).
\end{equation*}
\item \label{thm:ii:multnonmultcnds}
$(A,[\cdot,\cdot],\alpha)$  is not multiplicative, if and only if the strict inclusion takes place
\begin{equation*}
\exists\ m,n\in \mathbb{Z}:
\left\{(x_m,x_n)\in A_m\times A_n \mid
[\alpha(x_m),\alpha(x_n)]=\alpha([x_m,x_n]) \right\}
      \subsetneq A_m\times A_n,
\end{equation*}
or equivalently if and and only if
\begin{equation*}
\begin{array}{l}
\exists\ m,n\in \mathbb{Z}: \\
 A_m\times A_n
\setminus  \{(x_m,x_n)\in A_m\times A_n \mid [\alpha(x_m),\alpha(x_n)]=\alpha([x_m,x_n])
         \} \neq \emptyset
\end{array}
\end{equation*}
\item \label{thm:iii:multnonmultcnds}
\label{thm:item:mult}
If $(A,[\cdot,\cdot],\alpha)$ is multiplicative, then one of the following alternatives holds:
\begin{enumerate} 
\item  \label{thm:iiia:multnonmultcnds}
linear operator $\alpha$ is even, that is homogeneous of degree $k=0$;
\item \label{thm:iiib:multnonmultcnds}
linear operator $\alpha$ is homogeneous of degree $k\neq 0$ and
\begin{equation*}
\forall \ m,n\in \mathbb{Z}: \ [\alpha(A_m),\alpha(A_n)]\times\alpha([A_m,A_n])=\{0\}\times\{0\}=\{(0,0)\},
\end{equation*}
by linearity of $\alpha$ and bilinearity of $[\cdot,\cdot]$ equivalent to $[\alpha(\cdot),\alpha(\cdot)]=\alpha([\cdot,\cdot])=0.$
\end{enumerate}
\item  \label{thm:iv:multnonmultcnds}
If $k\neq 0$, then
$(A,[\cdot,\cdot],\alpha)$ is not multiplicative if and only if
\begin{equation*}
\exists\ m,n\in \mathbb{Z}: [\alpha(A_m),\alpha(A_n)]\times\alpha([A_m,A_n])\neq\{0\}\times\{0\}=\{(0,0)\}.
\end{equation*}
If $k\neq 0$, then
$(A,[\cdot,\cdot],\alpha)$ is multiplicative if and only if
\begin{equation*}
\forall\ m,n\in \mathbb{Z}: [\alpha(A_m),\alpha(A_n)]\times\alpha([A_m,A_n])=\{0\}\times\{0\}=\{(0,0)\},
\end{equation*}
that is $
\forall\ m,n\in \mathbb{Z}: [\alpha(A_m),\alpha(A_n)]=\alpha([A_m,A_n])=\{0\},
$
which is the same as
\begin{equation*}
[A,A]\subseteq \ker(\alpha),\quad \alpha(A)\subseteq \ker([\cdot,\cdot])=\{(x,y)\in A\times A \mid [x,y]=0\}.
\end{equation*}
or equivalently, for elements of the homogeneous subspaces, if and only if
\begin{equation*}
\forall \ m,n\in \mathbb{Z}, x_m\in A_m, x_n\in A_n:
[\alpha(x_m),\alpha(x_n)]=\alpha([x_m,x_n])=0.
\end{equation*}
\end{enumerate}
\end{thm}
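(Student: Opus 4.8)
The plan is to dispatch the four parts in order, in each case leaning on the chain of inclusions displayed immediately before the statement together with the directness of the decomposition $A = \bigoplus_{n\in\mathbb{Z}} A_n$.

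For part (i), I would observe that both $(x,y)\mapsto \alpha([x,y])$ and $(x,y)\mapsto [\alpha(x),\alpha(y)]$ are bilinear maps $A\times A \to A$: the first by linearity of $\alpha$ and bilinearity of $[\cdot,\cdot]$, the second for the same reason. A bilinear identity holds on all of $A\times A$ precisely when it holds on a pair of spanning sets, and since $A=\bigoplus_{n} A_n$, every element of $A$ is a finite sum of homogeneous ones, so the $A_n$ jointly span $A$. Hence multiplicativity, i.e. $\alpha([x,y])=[\alpha(x),\alpha(y)]$ for all $x,y\in A$, is equivalent to the same identity for all $x_m\in A_m$, $x_n\in A_n$ and all $m,n\in\mathbb{Z}$; negating this yields the stated criterion for non-multiplicativity. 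Part (ii) is then a purely set-theoretic rephrasing of (i): writing $M_{m,n}=\{(x_m,x_n)\in A_m\times A_n : [\alpha(x_m),\alpha(x_n)]=\alpha([x_m,x_n])\}$, the identity holds for every pair in $A_m\times A_n$ exactly when $M_{m,n}=A_m\times A_n$, so failure for some degrees is exactly $M_{m,n}\subsetneq A_m\times A_n$ for some $m,n$, which is in turn equivalent to the complement $(A_m\times A_n)\setminus M_{m,n}$ being nonempty.

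Part (iii) is the crux, and it is here that the grading enters essentially. Suppose $(A,[\cdot,\cdot],\alpha)$ is multiplicative; if $k=0$ we are in alternative (a) and there is nothing to prove, so assume $k\neq 0$. Fix $m,n$ and $x_m\in A_m$, $x_n\in A_n$. By the displayed inclusions, $[\alpha(x_m),\alpha(x_n)]\in A_{m+n+2k}$ while $\alpha([x_m,x_n])\in A_{m+n+k}$. Multiplicativity forces these two elements to be equal, so their common value lies in $A_{m+n+2k}\cap A_{m+n+k}$. Since $k\neq 0$, the degrees $m+n+2k$ and $m+n+k$ are distinct, and distinct summands of a direct sum meet only in $\{0\}$; hence $[\alpha(x_m),\alpha(x_n)]=\alpha([x_m,x_n])=0$. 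As this holds for all homogeneous arguments, bilinearity upgrades it to $[\alpha(\cdot),\alpha(\cdot)]=\alpha([\cdot,\cdot])=0$, which is alternative (b). The main obstacle here is conceptual rather than computational: the point is to recognize that for $k\neq 0$ equality of the two sides is impossible unless both vanish, because they are pinned to different homogeneous components.

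Finally, for part (iv) I would specialize part (iii) to $k\neq 0$. The forward direction of (iii) shows that multiplicativity forces $[\alpha(A_m),\alpha(A_n)]=\alpha([A_m,A_n])=\{0\}$ for all $m,n$; conversely, if both vanish identically then the multiplicativity identity holds trivially since both sides are $0$. Thus for $k\neq 0$ multiplicativity is equivalent to $[\alpha(A_m),\alpha(A_n)]=\alpha([A_m,A_n])=\{0\}$ for all $m,n$. Translating through the grading, $\alpha([A_m,A_n])=\{0\}$ for all $m,n$ says exactly $[A,A]\subseteq\ker(\alpha)$, and $[\alpha(A_m),\alpha(A_n)]=\{0\}$ for all $m,n$ says $\alpha(A)\subseteq\ker([\cdot,\cdot])$; the homogeneous-element form and the negated non-multiplicativity form are then immediate restatements. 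Each of the displayed equivalent conditions is obtained by unwinding these inclusions, with no further use of the grading beyond what was already established in (iii).
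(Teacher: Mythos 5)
Your proof is correct and takes essentially the same approach as the paper: the paper gives no detailed argument, simply asserting that the theorem follows ``directly'' from the chain of inclusions displayed before the statement, and your proposal fills in exactly those details --- bilinearity together with spanning by homogeneous elements for parts (i)--(ii), and the observation that for $k\neq 0$ the two sides $[\alpha(x_m),\alpha(x_n)]\in A_{m+n+2k}$ and $\alpha([x_m,x_n])\in A_{m+n+k}$ lie in distinct summands of the direct sum, hence can only be equal if both vanish, for parts (iii)--(iv).
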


If $\dim A_m=1$ for all $m\in \mathbb{Z}$ and $\{x_m\in A_m, m\in\mathbb{Z}\}$ is a homogeneous basis of the $\mathbb{Z}$-graded linear space $A= \bigoplus\limits_{m\in \mathbb{Z}} A_m$, then
for all $m,n\in \mathbb{Z}$,
\begin{align*}
& \alpha(x_{m})=\alpha_{m+k,m} x_{m+k}, \quad \text{for some unique}\ \alpha_{m+k,m} \in \mathbb{K} \\
& \alpha(x_{m+n}) = \alpha_{m+n+k,m+n} x_{n+m+k}, \quad \text{for some unique}\ \alpha_{m+n+k,m+n} \in \mathbb{K} \\
& \left[x_m,x_n\right] = c^{m+n}_{m,n}x_{m+n}, \quad \text{for some unique}\ c^{m+n}_{m,n} \in \mathbb{K}\\
& \left[\alpha(x_m),\alpha(x_n)\right]= \alpha_{m+k,m}\alpha_{n+k,n} c^{m+n+2k}_{m+k,n+k}
x_{m+n+2k}, \\
& \alpha([x_m,x_n])=  c^{m+n}_{m,n}\alpha(x_{m+n})= c^{m+n}_{m,n}\alpha_{m+n+k,m+n}x_{m+n+k}.
\end{align*}

\begin{cor} \label{cor:Hom-algebramultiplicativity}
Let $\mathcal{A}=(A=\displaystyle{\bigoplus_{n\in\mathbb{Z}}}A_n,[\cdot,\cdot],\alpha)$  be a $\mathbb{Z}$-graded
Hom-algebra where $\alpha$ is a homogeneous linear operator of degree $\deg \alpha = k\in\mathbb{Z}$.
If $\dim A_m=1$ for all $m\in \mathbb{Z}$ and $\{x_m\in A_m, m\in\mathbb{Z}\}$ is a homogeneous basis of the $\mathbb{Z}$-graded linear space $A= \oplus_{m\in \mathbb{Z}} A_m$, then
for all $m,n\in \mathbb{Z}$,
\begin{enumerate}
\item \label{item1:corollary}If $\deg \alpha =k\neq 0$, then $\mathcal{A}$ is multiplicative if and only if
for all $m, n \in \mathbb{Z}$,
\begin{equation*}
\alpha_{m+k,m}\alpha_{n+k,n} c^{m+n+2k}_{m+k,n+k} = c^{m+n}_{m,n}\alpha_{m+n+k,m+n} = 0,
\end{equation*}
which is equivalent to
$
\left\{
\begin{array}{l}
\alpha_{m+n+k,m+n} = 0,\quad \text{if}\quad c^{m+n}_{m,n}\neq 0 \\
\alpha_{m+k,m}=0 \ \text{or}\ \alpha_{n+k,n}=0, \quad \text{if}\quad c^{m+n+2k}_{m+k,n+k}\neq 0
\end{array}
\right.
$
\item\label{item2:corollary} If $\deg \alpha =k=0$, then $\mathcal{A}$ is multiplicative if and only if, for all $m, n \in \mathbb{Z}$,
\begin{equation*}
\alpha_{m,m}\alpha_{n,n} c^{m+n}_{m,n}=c^{m+n}_{m,n}\alpha_{m+n,m+n},
\end{equation*}
that is if and only if, for all $m, n \in \mathbb{Z}$,
\begin{equation*}
c^{m+n}_{m,n}(\alpha_{m,m}\alpha_{n,n}-\alpha_{m+n,m+n})=0,
\end{equation*}
or equivalently if and only if, for all $m, n \in \mathbb{Z}$,
\begin{equation*}
\alpha_{m,m}\alpha_{n,n}=\alpha_{m+n,m+n},\quad \text{if}\quad c^{m+n}_{m,n}\neq 0.
\end{equation*}
\end{enumerate}
\end{cor}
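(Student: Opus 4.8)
The plan is to deduce the corollary directly from Theorem~\ref{thm:multnonmultcnds} together with the coordinate formulas displayed immediately before the statement. Because $\dim A_m = 1$ for every $m \in \mathbb{Z}$, the homogeneous set $\{x_m \mid m \in \mathbb{Z}\}$ is a basis of $A$, so by linearity of $\alpha$ and bilinearity of $[\cdot,\cdot]$ the multiplicativity identity \eqref{multipl:homleibnizhomlie} holds for all $x,y \in A$ if and only if it holds for every pair of basis vectors $(x_m,x_n)$; this is exactly the criterion recorded in Theorem~\ref{thm:multnonmultcnds}. I would therefore fix $m,n \in \mathbb{Z}$ and compare the two already-computed homogeneous elements
\[
[\alpha(x_m),\alpha(x_n)] = \alpha_{m+k,m}\alpha_{n+k,n}\, c^{m+n+2k}_{m+k,n+k}\, x_{m+n+2k} \in A_{m+n+2k}
\]
and
\[
\alpha([x_m,x_n]) = c^{m+n}_{m,n}\,\alpha_{m+n+k,m+n}\, x_{m+n+k} \in A_{m+n+k}.
\]

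The decisive step is to track the direction of the degree shift. When $k \neq 0$ the indices $m+n+2k$ and $m+n+k$ are distinct, so the two expressions lie in different summands of the direct decomposition $A = \bigoplus_{j} A_j$; since $A_{m+n+2k} \cap A_{m+n+k} = \{0\}$, equality of the two sides forces each coefficient to vanish separately, giving $\alpha_{m+k,m}\alpha_{n+k,n} c^{m+n+2k}_{m+k,n+k} = c^{m+n}_{m,n}\alpha_{m+n+k,m+n} = 0$, which is the first displayed condition of part~(i). Its equivalent case-form is then pure field arithmetic: as $\mathbb{K}$ has no zero divisors, $c^{m+n}_{m,n}\alpha_{m+n+k,m+n} = 0$ says $\alpha_{m+n+k,m+n} = 0$ whenever $c^{m+n}_{m,n} \neq 0$, and $\alpha_{m+k,m}\alpha_{n+k,n} c^{m+n+2k}_{m+k,n+k} = 0$ says $\alpha_{m+k,m} = 0$ or $\alpha_{n+k,n} = 0$ whenever $c^{m+n+2k}_{m+k,n+k} \neq 0$.

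For part~(ii), with $k = 0$ both elements now lie in the same subspace $A_{m+n}$ and are scalar multiples of the same basis vector $x_{m+n}$ (here $\alpha_{m+k,m}$ reduces to $\alpha_{m,m}$, and so on). Hence equality of the two sides is equivalent to equality of the scalar coefficients, namely $\alpha_{m,m}\alpha_{n,n} c^{m+n}_{m,n} = c^{m+n}_{m,n}\alpha_{m+n,m+n}$, which I would rearrange to $c^{m+n}_{m,n}(\alpha_{m,m}\alpha_{n,n} - \alpha_{m+n,m+n}) = 0$ and, invoking once more the absence of zero divisors in $\mathbb{K}$, to $\alpha_{m,m}\alpha_{n,n} = \alpha_{m+n,m+n}$ precisely when $c^{m+n}_{m,n} \neq 0$.

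Overall this is a routine bookkeeping argument rather than a hard theorem; the only genuinely substantive point, and the one I would state most carefully, is the grade-separation observation for $k \neq 0$, that two homogeneous vectors of different degrees can be equal only if both vanish. Everything else is substitution into the pre-computed formulas and elementary use of the field axioms.
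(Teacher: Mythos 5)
Your proof is correct and takes essentially the same route the paper intends: the corollary is presented there as a direct consequence of Theorem~\ref{thm:multnonmultcnds} together with the coordinate formulas displayed just before the statement, and your argument reproduces exactly that — reduction to basis pairs, the grade-separation observation $A_{m+n+2k}\cap A_{m+n+k}=\{0\}$ for $k\neq 0$ (which is the same inclusion the paper uses to derive parts (iii)--(iv) of the theorem), and coefficient comparison within the one-dimensional component $A_{m+n}$ for $k=0$. The use of the absence of zero divisors in $\mathbb{K}$ to pass to the case-form conditions is also the intended step, so no gaps remain.
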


\begin{defn}[\cite{HartwigLarssonSilvestrov:defLiealgsderiv,ms:homstructure}]
\label{def:homliealg}
Hom-Lie algebras are Hom-algebras $( A, [\cdot,\cdot], \alpha)$ consisting of a linear space $A$ over a field  $\mathbb{K}$, a bilinear map $[\cdot,\cdot]$: $ A\times A\rightarrow  A$ and a linear map $\alpha:  A \rightarrow A$, satisfying for all $x,y,z \in A$,
\begin{align}
\label{skewsymmetry}
[x,y]&=-[y,x] &\quad \quad \text{(Skew-symmetry identity)}\\
\label{homliejacobiidentity}
[\alpha(x),[y, z]] &+[\alpha(y),[z,x]]+[\alpha(z),[x,y]] = 0. &\quad \quad \text{(Hom-Jacobi identity)}
\end{align}
\end{defn}

\begin{defn}[\cite{LarssonSilv2005:QuasiLieAlg,ms:homstructure}]
\label{def:homleibnizalg}
Hom-Leibniz algebras are Hom-algebras $( A, [\cdot,\cdot], \alpha)$ consisting of a linear space $ A$ over a field $\mathbb{K}$, a bilinear map $[\cdot,\cdot]$: $ A\times A\rightarrow  A$ and a linear map $\alpha:  A\rightarrow A$ satisfying for all $x,y,z \in A$,
\begin{equation}\label{Leibnizidentity}
[\alpha(x),[y, z]]=[[x,y],\alpha(z)]+[\alpha(y),[x,z]]. \quad \quad \text{(Hom-Leibniz identity)}
\end{equation}
When, moreover, the linear map $\alpha: A\rightarrow A$ satisfies multiplicativity \eqref{multipl:homleibnizhomlie}, that is when $\alpha$ is an algebra endomorphism,
the Hom-Leibniz algebra $(A, [\cdot,\cdot], \alpha)$ is called multiplicative.
\end{defn}

\begin{rmk}
Skewsymmetric Hom-algebras are Hom-algebras satisfying the skewsymmetry axiom \eqref{skewsymmetry}, and hence the Hom-Lie algebras form a special subclass of skewsymmetric Hom-algebras where moreover the Hom-Jacobi identity \eqref{homliejacobiidentity} holds.
In skewsymmetric algebras however there is no requirement of any relations between the linear operation $\alpha$ and bilinear operation $[\cdot,\cdot]$. In this sence, the skewsymmetric Hom-algebras can be seen and studied just as arbitrary pairs of skewsymmetric algebras and linear operators on them. However, this is not the case in Hom-Lie or Hom-Leibniz algebras where the linear and bilinear operations are dependent via Hom-Jacobi and Hom-Leibniz identities in nontrivial ways.
\end{rmk}
\begin{rmk}
Every skewsymmetric Hom-Leibniz algebra is a Hom-Lie algebra, every Hom-Lie algebra is a skewsymmetric Leibniz algebra, but not every Hom-Leibniz algebra is skewsymmetric, and thus Hom-Lie algebras  as a class of Hom-algebras coincides with the intersection of the class of Hom-Leibniz algebras and the class of skewsymmetric algebras, which is moroever properly included in each of classes.
\end{rmk}

\begin{ex}\label{ex1}
For $q\in\mathbb{K}\setminus\{0\}$ and $n\in\mathbb{Z}$, the $q$-numbers $\{n\}$ defined by
$$
\{n\}=
\left\{\begin{array}{ll}
\frac{1-q^n}{1-q}, & \text{for} \  q\neq 1 \\
n, & \text{for} \ q=1
\end{array}
\right.
$$
have the following properties
\begin{equation}
\begin{array}{c}
\{m+1\}=1+q\{m\}=\{m\}+q^m,\ \{m+n\}=\{m\}+q^m\{n\},\ q^m\{-m\}=-\{m\},\\
\{m\}=0 \ \Leftrightarrow \ q^{m}=1.
\end{array}
\end{equation}
The linear space $\mathcal {V}^{q}$ with a basis $\{L_n|n\in\mathbb{Z}\}$ equipped with the bilinear operation $[\cdot,\cdot]$ and a linear map $\alpha$ on $\mathcal {V}^{q}$
on the basis, for all $m, n\in\mathbb{Z}$, by
\begin{align}
\label{witt bracket}
&\left[L_m,L_n\right]=(\{m\}-\{n\})L_{m+n}, \quad  \\
&\alpha(L_n)=(1+q^n)L_n. \nonumber
\end{align}
Then, $(\mathcal {V}^{q}, [\cdot,\cdot], \alpha)$
is a Hom-Lie algebra \cite{HartwigLarssonSilvestrov:defLiealgsderiv,SigSilv:CzechJP2006:GradedquasiLiealgWitt,SigSilv:GLTbdSpringer2009}, called the \textit{$q$-deformed Witt Hom-Lie algebra} or \textit{$q$-Witt Hom-Lie algebra}.
There is a natural $\mathbb{Z}$-grading on $\mathcal{V}^{q}$,
$$
\mathcal{V}^{q}=\bigoplus_{n\in\mathbb{Z}}\mathcal{V}^{q}_n, \ \mathcal{V}^{q}_n=\mathbb{K}L_n,\ n\in \mathbb{Z}.$$
\end{ex}

\begin{ex}
\label{ex1modified:degk}
If, in Example \ref{ex1}, the linear operator $\alpha$, homogeneous of degree $k$, is defined for all $n\in\mathbb{Z}$, by
$\alpha=\alpha_k(L_n)=(1+q^{n-k})L_{n+k},$ then $(\mathcal {V}^{q}, [\cdot,\cdot], \alpha_k)$ are  $\mathbb{Z}$-graded Hom-Lie algebras for all $k\in \mathbb{Z}$.
\end{ex}

\begin{ex}\label{ex2}
For $q\neq 0$ and  $n\in\mathbb{Z}$, let $[n]$ denote
the $q$-number
\begin{equation*} 
[n]=[n]_q=\left\{\begin{array}{l}
\frac{q^n-q^{-n}}{q-q^{-1}}, \ \text{if}\ q\neq \pm 1\\
n,  \ \text{if}\ q=1 \\
(-1)^{n-1} n=(-1)^{n+1}n=-(-1)^n n, \ \text{if}\ q=-1.  \end{array} \right.
\end{equation*}
Note that these $q$-numbers are invariant under transformation replacing $q$ by $q^{-1}$, and satisfy for all $m,n\in \mathbb{Z}$,
\begin{equation*}
\label{qnumbersymproperties}
\left\{\begin{array}{l}
[-n]=-[n],\ q^{n}[m]-q^{m}[n]=[m-n],\   q^{-n}[m]+q^{m}[n]=[m+n], \ \text{for all}\  q\in\mathbb{K}\setminus\{0\} \\
\left[n\right]=0 \ \Leftrightarrow \ q^{2n}=1,  \ \text{for all}\ q\neq \pm 1 \\
\left[n\right]=n=0 \ \Rightarrow \ n=0, q^{2n}=1^{0}=1, \ \text{for}\ q= 1 \\
\left[n\right]=(-1)^{n-1} n=0 \ \Rightarrow \ n=0, q^{2n}=(-1)^{0}=1, \ \text{for}\ q=-1.
\end{array}\right.
\end{equation*}
Note that if $q=\pm 1$, then
$q^{2n}=1$ for all $n\in\mathbb{Z}$, while
$\left[n\right]=\left\{\begin{array}{l}
n,  \ \text{if}\ q=1 \\
(-1)^{n-1} n, \ \text{if}\ q=-1\\
\end{array} \right. =0  $ only for $n=0$.

Let $\mathcal{W}^{q}$ be a linear space with basis $\{L_n,W_n|n\in\mathbb{Z}\}$, and a bilinear operation on $\mathcal{W}^{q}$ is defined on the basis, for all $m,n\in\mathbb{Z}$, by
 \begin{eqnarray}\label{H-L1}
            [L_m,L_n]=[m-n]L_{m+n},\ \
            {[L_m,W_n]=[m-n]W_{m+n}},
        \end{eqnarray}
        and with other brackets obtained by skew-symmetry or equal to 0.
        The linear map $\alpha$ on $\mathcal {W}^q$ is defined, for all $n\in\mathbb{Z}$, by
        \begin{equation*} 
            \alpha(L_n)=(q^n+q^{-n})L_n,\ \  \alpha(W_n)=(q^n+q^{-n})W_n.
 \end{equation*}
It was proved in \cite{YY} that the triple ($\mathcal{W}^q, [\cdot,\cdot], \alpha$)
forms a Hom-Lie algebra, which is called the \textit{$q$-deformed $W(2,2)$ Hom-Lie algebra}.
By defining ${\rm deg}(L_n)={\rm deg}(W_n)=n$, we obtain that $\mathcal{W}^q$ is $\mathbb{Z}$-graded Hom-Lie algebra, namely  $\mathcal{W}^q=\bigoplus_{n\in\mathbb{Z}}\mathcal{W}^q_n$ with $\mathcal{W}^q_n={\rm span}_{\mathbb{K}}\{L_n,W_n\}.$
Note that $\mathcal{W}^q$ is not multiplicative since $\alpha$ is not a homomorphism of Hom-Lie algebras.
\end{ex}

\begin{ex}\label{ex2modified:degk}
In Example \ref{ex2}, if the homogeneous linear operator of degree $k$ is defined by
$ \beta(L_n)=(q^{n-k}+q^{k-n})L_{n+k}$,  $\beta(W_n)=(q^{n-k}+q^{k-n})W_{n+k}$ for all $n\in\mathbb{Z},$ then
$(\mathcal {W}^q, [\cdot,\cdot], \beta)$ is a $\mathbb{Z}$-graded Hom-Lie algebra.
\end{ex}

\begin{prop}
For any $k\in \mathbb{Z}$, the Hom-Lie algebra $(\mathcal {V}^{q}, [\cdot,\cdot], \alpha_k)$ is multiplicative if and only if $k=0$ and $q=-1$.
\end{prop}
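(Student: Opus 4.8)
The plan is to reduce the multiplicativity question to the scalar equations of Corollary~\ref{cor:Hom-algebramultiplicativity}. Since $\mathcal{V}^{q}=\bigoplus_{n}\mathbb{K}L_{n}$ has one-dimensional homogeneous components with homogeneous basis $\{L_{n}\}$ and $\alpha_{k}$ is homogeneous of degree $k$, the Corollary applies verbatim. Reading off the structure constants from Example~\ref{ex1modified:degk} and \eqref{witt bracket}, one has $\alpha_{m+k,m}=1+q^{m-k}$ and $c^{m+n}_{m,n}=\{m\}-\{n\}$. The whole problem thus becomes: for which pairs $(q,k)$ do the resulting equations hold for every $m,n\in\mathbb{Z}$? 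Throughout I would lean on the $q$-number identities of Example~\ref{ex1}, in particular $\{m\}=0\Leftrightarrow q^{m}=1$ and $\{m+n\}=\{m\}+q^{m}\{n\}$.

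First I would dispose of the degree-zero case $k=0$ using part~\ref{item2:corollary}. There multiplicativity is equivalent to $(1+q^{m})(1+q^{n})=1+q^{m+n}$ for all $m,n$ with $\{m\}-\{n\}\neq0$, which expands to $q^{m}+q^{n}=0$. Taking $m=1,\ n=0$ (where $\{1\}-\{0\}=1\neq0$) gives $(1+q)\cdot 2=1+q$, forcing $q=-1$. Conversely, for $q=-1$ the hypothesis $\{m\}\neq\{n\}$ means $m,n$ have opposite parity, and then $q^{m}+q^{n}=(-1)^{m}+(-1)^{n}=0$; so $q=-1$ makes the even operator multiplicative and no other value of $q$ does.

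The substantive part is the case $k\neq0$, governed by part~\ref{item1:corollary}: multiplicativity now demands that the two products
\[
(1+q^{m-k})(1+q^{n-k})\big(\{m+k\}-\{n+k\}\big)
\quad\text{and}\quad
\big(\{m\}-\{n\}\big)\,(1+q^{m+n-k})
\]
vanish simultaneously for all $m,n$. I would first exploit the second (``diagonal'') product: choosing $m,n$ of opposite parity makes $\{m\}-\{n\}\neq0$, so the factor $1+q^{m+n-k}$ must vanish, and $\{m\}=0\Leftrightarrow q^{m}=1$ turns this into an equation constraining $q$. I expect the main obstacle to lie precisely here, in running these vanishing requirements simultaneously over all $m,n$ while tracking when $\{m+k\}-\{n+k\}$ and the factors $1+q^{\,\cdot}$ vanish; this parity and root-of-unity bookkeeping --- rather than any single algebraic identity --- is what must be organized to decide which $(q,k)$ with $k\neq0$ survive, and hence to separate them from the degree-zero solution.

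Finally I would combine the two analyses: the degree-zero computation already isolates $q=-1$, and the $k\neq0$ discussion is where the admissible values of $k$ must be pinned down, after which substituting back into \eqref{witt bracket} and the definition of $\alpha_{k}$ confirms sufficiency.
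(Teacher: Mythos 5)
Your degree-zero analysis is correct and complete, and it follows the same route as the paper: reduce via Corollary \ref{cor:Hom-algebramultiplicativity}\,\ref{item2:corollary} to $q^{m}+q^{n}=0$ whenever $\{m\}\neq\{n\}$, extract $q=-1$ from the pair $(m,n)=(1,0)$, and verify sufficiency for $q=-1$ by the parity argument (which is legitimate there, because $q=-1$ has already been fixed).

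The genuine gap is the case $k\neq 0$, which is the bulk of both the proposition and the paper's proof: one must show that for $k\neq 0$ \emph{no} value of $q$ makes $(\mathcal{V}^{q},[\cdot,\cdot],\alpha_{k})$ multiplicative. You correctly write down the two vanishing conditions coming from Corollary \ref{cor:Hom-algebramultiplicativity}\,\ref{item1:corollary}, but then you only describe the remaining work (``parity and root-of-unity bookkeeping \dots is what must be organized'') without carrying it out; no conclusion about $k\neq 0$ is ever reached, so the proposition is not proved. Worse, the one concrete step you do offer there is false for general $q$: the claim that choosing $m,n$ of opposite parity forces $\{m\}-\{n\}\neq 0$ is a fact about $q=-1$ only, since for $q\neq 1$ one has $\{m\}-\{n\}=\frac{q^{n}-q^{m}}{1-q}$, which vanishes exactly when $q^{m}=q^{n}$; for instance, if $q$ is a primitive cube root of unity and $(m,n)=(0,3)$ (opposite parity), then $\{0\}=\{3\}=0$. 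Hence you cannot conclude that $1+q^{m+n-k}$ must vanish for all such pairs, and the intended starting move of your $k\neq 0$ analysis collapses. The paper closes this case by reducing the two conditions to \eqref{Vmultcondprfqcond} and then splitting into three exhaustive subcases according to whether $q^{k}=-1$, $q^{k}=1$ (with $q^{k}\neq -1$), or $q^{k}\neq\pm 1$, exhibiting in each an explicit pair --- respectively $m-n=k$, $(m,n)=(2k+1,k)$, and $(m,n)=(2k,k)$ --- that violates the condition. An argument of this exhaustive kind, covering every possible multiplicative order of $q$, is precisely what your sketch defers and what the proof cannot do without.
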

\begin{proof}
If $k=\deg\alpha_k = \deg\alpha_0= 0$, then
by Corollary \ref{cor:Hom-algebramultiplicativity} \ref{item2:corollary},
\begin{align*}
& (\mathcal {V}^{q}, [\cdot,\cdot], \alpha)\quad \text{is multiplicative} \ \Leftrightarrow \\
&\forall\ m,n\in\mathbb{Z}:\ (\{m\}-\{n\})\big((1+q^{m})(1+q^{n})-(1+q^{m+n})\big)=0 \ \Leftrightarrow \\
&\forall\ m,n\in\mathbb{Z}:\ (\{m\}-\{n\})(q^n+q^m)=0 \ \Leftrightarrow \\
& \forall\ m,n\in\mathbb{Z}:\
\left\{\begin{array}{ll}
(q^n-q^m)(q^n+q^m)=0, & \text{for} \  q\neq 1 \\
(m-n)(q^n+q^m)=(m-n)\cdot 2=0, & \text{for} \ q=1
\end{array}
\right.
 \ \Leftrightarrow \\
&\forall\ m,n\in\mathbb{Z}:\
\left\{\begin{array}{ll}
q^{2(n-m)}=1, & \text{for} \  q\neq 1 \\
m-n=0, & \text{for} \ q=1
\end{array}\right.
 \ \Leftrightarrow \\
& q\neq 1, \forall\ p \in \mathbb{Z}:\ q^{2p}=1 \Leftrightarrow
q\neq 1, q^{2}=1 \ \text{(for $p=1$)} \Leftrightarrow \ q=-1.
\end{align*}
If $k=\deg\alpha_k \neq 0$, then by Corollary \ref{cor:Hom-algebramultiplicativity} \ref{item1:corollary},
\begin{align}
& (\mathcal {V}^{q}, [\cdot,\cdot], \beta)\quad \text{is multiplicative} \ \Leftrightarrow \nonumber \\
&\forall\ m,n\in\mathbb{Z}:\ \left\{
\begin{array}{l}
(\{m\}-\{n\})(1+q^{m+n-k})= 0; \\
(1+q^{m-k})(1+q^{n-k})(\{m+k\}-\{n+k\})=0,
\end{array}\right. \Leftrightarrow \nonumber \\
&\forall\  m,n\in\mathbb{Z}:\
\left\{
\begin{array}{l}
\left\{\begin{array}{l}
(q^n-q^m)(1+q^{m+n-k})= 0,\ \text{if}\ q\neq 1 \\
(m-n)\cdot 2 =0,\ \text{if}\ q= 1
\end{array}\right.;
\\
\left\{\begin{array}{l}
 (1+q^{m-k})(1+q^{n-k})(q^{n+k}-q^{m+k})=0,\ \text{if}\ q\neq 1 \\
4\cdot (m-n)=0, \text{if}\ q=1
\end{array}\right.
\end{array}
\right. \Leftrightarrow  \nonumber  \\
&q\neq 1 \ \text{and}\ \forall\  m,n\in\mathbb{Z}:\
\left\{
\begin{array}{lll}
& q^{n}=q^{m} \ \text{or} \ q^{n+m-k}=-1; \\
& q^{m-k}=-1 \ \text{or} \ q^{n-k}=-1 \ \text{or} \ q^{n}=q^{m}
\end{array}
\right.
\Leftrightarrow \nonumber \\
& q\neq 1 \ \text{and}\ \forall\  m,n\in\mathbb{Z}:\
\left\{
\begin{array}{ll}
& q^{m-n}=1  \\
& \left\{
\begin{array}{ll}
& q^{n+m-k}=-1; \\
& q^{m-k}=-1 \ \text{or} \ q^{n-k}=-1
\end{array}\right. \ \text{if} \ q^{m-n}\neq 1,\
\end{array}
\right.
\Leftrightarrow \nonumber \\
& q\neq 1 \ \text{and}\ \forall\  m,n\in\mathbb{Z}:\
\left\{
\begin{array}{ll}
& q^{m-n}=1;  \\
& \left\{
\begin{array}{ll}
& q^{n+m-k}=-1   \\
& q^{m-k}=-1,
\end{array}\right. \ \text{if} \ q^{m-n}\neq 1,\ q^{n-k}\neq -1
\\
& \left\{
\begin{array}{ll}
& q^{n+m-k}=-1; \\
& \ q^{n-k}=-1
\end{array}\right. \ \text{if} \ q^{m-n}\neq 1,\ q^{m-k}\neq -1,
\\
\end{array}
\right.
\Leftrightarrow \nonumber \\
\label{Vmultcondprfqcond}
& q\neq 1 \ \text{and}\ \forall\  m,n\in\mathbb{Z}:\
\left\{
\begin{array}{ll}
& q^{m-n}=1;  \\
& q^{n}= 1   \ \text{for} \  q^{m}\neq 1, q^{k}\neq -1;
\\
& q^{m}=1 \ \text{for} \  q^{n}\neq 1, q^{k}\neq -1,
\end{array}
\right.
\end{align}
If $q\neq 1$ and $q^{k}= -1$, then  \eqref{Vmultcondprfqcond} reduces to
$q\neq 1 \ \text{and}\ \forall\  m,n\in\mathbb{Z}: q^{m-n}=1$, which
does not hold because $q^{m-n}=-1\neq 1$ when $m-n= k$.
If $q\neq 1$, $q^{k}\neq -1$ and $q^{k}= 1$, the \eqref{Vmultcondprfqcond} does not hold
since for $m=2k+1$ and $n=k$,
$$q^{m-n}=q^{k+1}=q\neq 1, \
q^{m}=q^{2k+1}=q\neq 1, \
q^{n}=q^{k+1}=q\neq 1,
$$
and if $q\neq 1$, $q^{k}\neq -1$ and $q^{k}\neq 1$,
then \eqref{Vmultcondprfqcond} does not hold since for $m=2k$ and $n=k$,
$$q^{m-n}=q^{k}\neq 1, \
q^{m}=q^{2k}\neq 1, \
q^{n}=q^{k}=q\neq 1.
$$
Hence, if $k=\deg\alpha_k \neq 0$, then $(\mathcal {V}^{q}, [\cdot,\cdot], \alpha_k)$ is not multiplicative for any $q$.
\end{proof}

\begin{prop}
The $q$-deformed $W(2,2)$ Hom-Lie algebra $(\mathcal {W}^{q}, [\cdot,\cdot], \alpha)$ is multiplicative if and only if $q^2=-1$  \textup{(}which is equivalent to
$q=\pm i$ if there exists $i\in \mathbb{K}$ such that $ i^2=-1$, for example when
$\mathbb{K}$ is algebraically closed field, like $\mathbb{C}$\textup{)}.
\end{prop}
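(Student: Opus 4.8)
The plan is to invoke Theorem~\ref{thm:multnonmultcnds}\ref{thm:i:multnonmultcnds} in the degree-zero case $\deg\alpha=0$. Since $\alpha$ preserves the grading and both sides of the multiplicativity equation $[\alpha(x),\alpha(y)]=\alpha([x,y])$ are bilinear in $(x,y)$, it is enough to verify it on pairs of basis vectors, and by skew-symmetry only three unordered cases remain: $(L_m,L_n)$, $(L_m,W_n)$, and $(W_m,W_n)$. The last case is immediate, because $[W_m,W_n]=0$ makes both sides vanish, so the entire content sits in the first two.

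First I would treat $(L_m,L_n)$ using the brackets \eqref{H-L1} and the diagonal form of $\alpha$, obtaining $\alpha([L_m,L_n])=[m-n](q^{m+n}+q^{-(m+n)})L_{m+n}$ against $[\alpha(L_m),\alpha(L_n)]=(q^m+q^{-m})(q^n+q^{-n})[m-n]L_{m+n}$; the pair $(L_m,W_n)$ produces the identical scalar identity with $W_{m+n}$ replacing $L_{m+n}$. Hence both cases collapse to the single family $[m-n]\big((q^m+q^{-m})(q^n+q^{-n})-(q^{m+n}+q^{-(m+n)})\big)=0$ for all $m,n\in\mathbb{Z}$. Expanding the product and cancelling $q^{m+n}+q^{-(m+n)}$ reduces the bracketed factor to $q^{m-n}+q^{-(m-n)}$, so, writing $p=m-n$, multiplicativity is equivalent to $[p](q^{p}+q^{-p})=0$ for every $p\in\mathbb{Z}$.

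Finally I would analyse this one condition by splitting off the values $q=\pm1$. For $q\neq\pm1$ the telescoping $[p](q^{p}+q^{-p})=\frac{(q^{p}-q^{-p})(q^{p}+q^{-p})}{q-q^{-1}}=[2p]$ turns the requirement into $[2p]=0$ for all $p$, which by the zero-characterization $[n]=0\Leftrightarrow q^{2n}=1$ means $q^{4p}=1$ for all $p$; taking $p=1$ gives $q^{4}=1$, and since $q\neq\pm1$ rules out $q^2=1$, this is exactly $q^2=-1$, while conversely $q^2=-1$ forces $q^{4p}=1$ for all $p$ and so the condition holds. For the excluded values I would check directly that the condition fails: at $q=1$ one gets $[p](q^p+q^{-p})=2p$ and at $q=-1$ one gets $[p](q^p+q^{-p})=-2p$, both nonzero at $p=1$, so the algebra is non-multiplicative there, consistent with $q^2=1\neq-1$. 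The main point to watch is the $q=\pm1$ bookkeeping, where the $q$-number and its zero set behave differently from the generic case; the rest is routine cancellation and a single substitution $p=1$.
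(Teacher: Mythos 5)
Your proposal is correct and takes essentially the same route as the paper's proof: both evaluate the multiplicativity equation on the three bracket types $(L_m,L_n)$, $(L_m,W_n)$, $(W_m,W_n)$ via Theorem~\ref{thm:multnonmultcnds}~\ref{thm:i:multnonmultcnds}, reduce to the scalar condition $[m-n]\left(q^{m-n}+q^{n-m}\right)=0$ for all $m,n$, and split the cases $q=\pm1$ (where it fails) versus $q\neq\pm1$ (where it becomes $q^{4p}=1$ for all $p$, hence $q^4=1$ with $q^2\neq1$, i.e.\ $q^2=-1$). The only cosmetic difference is your substitution $p=m-n$ together with the identity $[p](q^{p}+q^{-p})=[2p]$, where the paper manipulates the equivalent equation $q^{4(m-n)}=1$ directly.
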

\begin{proof} For all $n,m\in\mathbb{Z}$, we have
\begin{align*}
    &[\alpha(L_m),\alpha(L_n)]-\alpha([L_m,L_n])=[(q^m+q^{-m})L_m,(q^n+q^{-n})L_n]-\alpha([m-n]L_{m+n})\\
     &\quad=(q^m+q^{-m})(q^n+q^{-n})[m-n]L_{m+n}-[m-n](q^{m+n}+q^{-m-n})L_{m+n}\\
      &\quad=[m-n](q^{m-n}+q^{n-m})L_{m+n}=
      \left\{\begin{array}{l}
      \frac{(q^{m-n}-q^{n-m})(q^{m-n}+q^{n-m})}{q-q^{-1}}L_{m+n}, \ \text{if}\ q\neq \pm 1\\
       2(m-n) L_{m+n}, \ \text{if}\ q=1\\
       2(n-m) L_{m+n}\ \text{if}\ q=-1
      \end{array}\right.
      \\
         &[\alpha(L_m),\alpha(W_n)]-\alpha([L_m,W_n])
         =[(q^m+q^{-m})L_m,(q^n+q^{-n})W_n]-\alpha([m-n]W_{m+n})\\
     &\quad=(q^m+q^{-m})(q^n+q^{-n})[m-n]W_{m+n}-[m-n](q^{m+n}+q^{-m-n})W_{m+n}\\
      &\quad=[m-n](q^{m-n}+q^{n-m})W_{m+n}=
      \left\{\begin{array}{l} \frac{(q^{m-n}-q^{n-m})(q^{m-n}+q^{n-m})}{q-q^{-1}}W_{m+n},
      \ \text{if}\ q\neq \pm 1\\
      2(m-n)   W_{m+n}, \ \text{if}\ q=1\\
      2(n-m)  W_{m+n}\ \text{if}\ q=-1
      \end{array}\right.,\\
        &\alpha([W_m,W_n])-[\alpha(W_m),\alpha(W_n)]=0.
\end{align*}
So, by Theorem \ref{thm:multnonmultcnds} \ref{thm:i:multnonmultcnds},
\begin{align*}
& \text{$(\mathcal {W}^{q}, [\cdot,\cdot], \alpha)$ is multiplicative}\  \Leftrightarrow \\
& \forall\ m,n\in\mathbb{Z}:\
\left\{\begin{array}{l} (q^{m-n}-q^{n-m})(q^{m-n}+q^{n-m})=0, \ \text{if}\ q\neq \pm 1\\
m-n=0, \ \text{if}\ q=\pm 1
\end{array}\right.
\Leftrightarrow \\
& q\neq \pm 1 \ \text{and} \ \forall\ m,n\in\mathbb{Z}:\ (q^{2(m-n)}-q^{2(n-m)})=0\ \Leftrightarrow \\
& q\neq \pm 1 \ \text{and} \ \forall\ m,n\in\mathbb{Z}:\ q^{4(m-n)}=1 \Leftrightarrow
q\neq \pm 1 \ \text{and} \ \forall\ p\in\mathbb{Z}:\ q^{4p}=1 \Leftrightarrow \\
& q\neq \pm 1 \ \text{and} \ \forall\ p\in\mathbb{Z}:\ q^4=1 \ \Leftrightarrow \ q^2=-1.\\
& \Leftrightarrow q=\pm i \ \text{if}\ \exists \ i\in \mathbb{K}: \ i^2=-1 \
\text{(for example if $\mathbb{K}$ is algebraically closed).}
\qedhere
\end{align*}
\end{proof}

\begin{prop}
The Hom-Lie algebra $(\mathcal {W}^{q}, [\cdot,\cdot], \beta)$
is not multiplicative for any
$q\in \mathbb{K}\setminus \{0\}$.
\end{prop}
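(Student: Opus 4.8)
The plan is to reduce to the degree-$k$ multiplicativity criterion of Theorem~\ref{thm:multnonmultcnds}\,\ref{thm:iv:multnonmultcnds} and then to exhibit homogeneous elements on which it fails. Here $\beta$ from Example~\ref{ex2modified:degk} is homogeneous of degree $k=\deg\beta$, and I take $k\neq 0$ (the case $k=0$ is $\beta=\alpha$, settled in the previous proposition). Since $k\neq 0$, Theorem~\ref{thm:multnonmultcnds}\,\ref{thm:iv:multnonmultcnds} says that $(\mathcal{W}^{q},[\cdot,\cdot],\beta)$ is multiplicative if and only if $[\beta(x),\beta(y)]=\beta([x,y])=0$ for all homogeneous $x,y$. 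Writing $a_j:=q^{j-k}+q^{k-j}$, so that $\beta(L_j)=a_jL_{j+k}$ and $\beta(W_j)=a_jW_{j+k}$, the same computations as in the proof of the preceding proposition give, for all $m,n\in\mathbb{Z}$,
\begin{align*}
\beta([L_m,L_n]) &= [m-n]\,a_{m+n}\,L_{m+n+k}, & [\beta(L_m),\beta(L_n)] &= [m-n]\,a_m a_n\,L_{m+n+2k},
\end{align*}
the mixed $L$, $W$ brackets producing the same two scalar factors (now on the $W$ generators) and $[W_m,W_n]=0$ contributing nothing. Hence multiplicativity is equivalent to the two scalar conditions $[m-n]\,a_{m+n}=0$ and $[m-n]\,a_m a_n=0$ holding for all $m,n$.

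Next I would produce a single pair $(m,n)$ at which one of these fails. The natural first choice is $m=n+1$, which makes the common factor $[m-n]=[1]=1\neq 0$ for every $q$, so the first condition collapses to $a_{2n+1}=q^{2n+1-k}+q^{k-2n-1}=0$, that is $q^{2(2n+1-k)}=-1$, and this is now required for every $n\in\mathbb{Z}$. Comparing two values of $n$ differing by $1$ forces $q^{4}=1$, and then $q^{2(2n+1-k)}=-1$ forces $q^{2}=-1$. Thus whenever $q^{2}\neq -1$ there is some $n$ with $a_{2n+1}\neq 0$, so $\beta([L_{n+1},L_n])\neq 0$ and $\beta$ is not multiplicative; this disposes of all $q$ except those with $q^{2}=-1$.

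The step I expect to be the main obstacle is the residual case $q^{2}=-1$, where the $q$-numbers degenerate ($[j]=0$ for even $j$, $[j]\neq 0$ for odd $j$) so that the factor $[m-n]$ already vanishes on every even difference and one must steer the argument through odd differences, tracking the parity of $k$. With $m=n+1$ one has $q^{2(2n+1-k)}=(-1)^{\,1-k}$, which equals $1\neq -1$ when $k$ is odd; then $a_{2n+1}\neq 0$, $\beta([L_{n+1},L_n])\neq 0$, and the proof is complete. The delicate subcase is $q^{2}=-1$ with $k$ even: there $a_{m+n}$ and $a_m a_n$ vanish on odd differences while $[m-n]$ vanishes on even ones, so both scalar conditions are satisfied and neither $\beta([L_m,L_n])$ nor $[\beta(L_m),\beta(L_n)]$ detects a failure. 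This is exactly the configuration where the argument must be sharpened, and I would single it out as the crux: completing the proposition as stated appears to require either a finer choice of homogeneous elements or the hypothesis that $k$ is odd (equivalently, the exclusion of $q$ with $q^{2}=-1$).
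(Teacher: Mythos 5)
Your computations are correct, and your route is essentially the paper's own: the paper likewise invokes Theorem~\ref{thm:multnonmultcnds}~\ref{thm:iv:multnonmultcnds}, reduces multiplicativity to the same two scalar families $[m-n]\,a_{m+n}=0$ and $[m-n]\,a_m a_n=0$ (in your notation $a_j=q^{j-k}+q^{k-j}$), and then tests the single pair $(m,n)=(k+1,k)$, which is exactly your choice $m=n+1$ specialized to $n=k$. The divergence is at the very last step, and there your caution uncovers a genuine error in the paper: the paper declares the resulting system
\begin{equation*}
2\bigl(q+q^{-1}\bigr)^{2}=0,\qquad \bigl(q^{k+1}+q^{-(k+1)}\bigr)\bigl(q+q^{-1}\bigr)=0,\qquad q\neq\pm1,
\end{equation*}
to be impossible, overlooking that $q+q^{-1}=0$, i.e.\ $q^{2}=-1$, solves it (and such $q$ exists, e.g.\ in $\mathbb{K}=\mathbb{C}$). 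So the case you single out as the crux is precisely the case the paper's proof silently discards.

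Moreover, what you present as an obstacle is in fact a disproof, and you can state it without hedging. Theorem~\ref{thm:multnonmultcnds}~\ref{thm:iv:multnonmultcnds} is an equivalence, so your verification that for $q^{2}=-1$ and even $k$ both scalar families vanish identically (when $q^{2}=-1$ one has $q^{-1}=-q$, hence $[j]=0$ for even $j$, while $a_{j}=q^{j-k}\bigl(1+(-1)^{j-k}\bigr)=0$ whenever $j-k$ is odd; for even $k$ these vanishing patterns between them cover every pair $(m,n)$), together with the conditions coming from $[W_m,W_n]=0$ which hold trivially, is already a complete proof that $\beta$ \emph{is} multiplicative in that case. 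Hence no finer choice of homogeneous elements can exist: the proposition as stated is false, with a concrete counterexample $\mathbb{K}=\mathbb{C}$, $q=i$, $k=2$. The correct statement, which your argument fully proves, is that for $k\neq 0$ the Hom-Lie algebra $(\mathcal{W}^{q},[\cdot,\cdot],\beta)$ is multiplicative if and only if $q^{2}=-1$ and $k$ is even; this is also consistent with the preceding proposition, which is the case $k=0$. In particular the proposition does hold for all odd $k$, and for every $k$ when $\mathbb{K}$ contains no square root of $-1$, but not in the generality claimed.
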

\begin{proof}
By Theorem \ref{thm:multnonmultcnds} \ref{thm:iv:multnonmultcnds},
\begin{align*}
& (\mathcal {W}^{q}, [\cdot,\cdot], \beta) \ \text{is multiplicative} \ \Leftrightarrow  \
\forall \ m,n\in\mathbb{Z}: \ \left\{
\begin{array}{llll}
\beta([L_m,L_n])=[ \beta(L_m),\beta(L_n)]=0\\
\beta([L_m,W_n])=[\beta(L_m),\beta(W_n)]=0,
\end{array}
\right. \Leftrightarrow \\
&
\forall \ m,n\in\mathbb{Z}: \ \left\{
\begin{array}{l}
(q^{m-k}+q^{k-m})(q^{n-k}+q^{k-n})[m-n]L_{m+n+2k}=0\\ (q^{m+n-k}+q^{k-m-n})[m-n]L_{m+n+k}=0, \\(q^{m-k}+q^{k-m})(q^{n-k}+q^{k-n})[m-n]W_{m+n+2k}=0\\ (q^{m+n-k}+q^{k-m-n})[m-n]W_{m+n+k}=0,
\end{array}
\right.
\Leftrightarrow \\
\\
&
q\neq \pm 1 \  \text{and} \
\forall \ m,n\in\mathbb{Z}:
\left\{
\begin{array}{lll}
(q^{m-k}+q^{k-m})(q^{n-k}+q^{k-n})(q^{m-n}+q^{n-m})=0,\\ (q^{m+n-k}+q^{k-m-n})(q^{m-n}+q^{n-m})=0,
\end{array}
\right.
\end{align*}
which does not hold since for $m=k+1, n=k$ it reduces to the impossible
$$q\neq \pm 1 \  \text{and} \
\left\{
\begin{array}{lll}
2(q+q^{-1})^2=0, \\
(q^{k+1}+q^{-(k+1)})(q+q^{-1})=0,
\end{array}
\right.
$$
Hence $(\mathcal {W}^{q}, [\cdot,\cdot], \beta)$ is not multiplicative for any
$q\in \mathbb{K}\setminus \{0\}$.
\end{proof}

\subsection{Averaging operators on Hom-algebras}
\begin{defn} An averaging operator on a Hom-algebra $(A,[\cdot,\cdot],\alpha)$ over $\mathbb K$ is a linear operator $P: A\rightarrow A$,
satisfying for all $x, y\in A,$
\begin{align}
 \label{Palphacom}
& \alpha\circ P=P\circ\alpha,
& \text{(commutativity of $P$ with $\alpha$)}  \\
  \label{avopaxiom}
& [P(x),P(y)] = P([P(x),y])=P([x,P(y)]),
&\text{(averaging operator axiom)}
 \end{align}
\end{defn}
\begin{rmk}
In skewsymmetric Hom-algebras, and thus in the Hom-Lie algebras in particular, the skewsymmetry of multiplication  \eqref{skewsymmetry} implies that  \eqref{avopaxiom} is equivalent to
\begin{equation}
\label{averaging operator 2}
 [P(x),P(y)] = P([P(x),y]),\;\;\forall\ x, y\in A.
\end{equation}
\end{rmk}

\begin{prop}
\label{thm:PAsubalg}
If $P$ is an averaging operator on a Hom-algebra $\mathcal{A}=(A,[\cdot,\cdot],\alpha)$, then
\begin{enumerate}
\item \label{thm:PAsubalg:i}
$(P(A),[\cdot,\cdot],\alpha)$ is a Hom-subalgebra of the Hom-Lie algebra $(A,[\cdot,\cdot],\alpha)$;
\item
\label{thm:PAsubalg:ii}
$[P(A),ker(P)] \subseteq ker(P) $ and $[ker(P),P(A)]  \subseteq ker(P) $.
\item
\label{thm:PAsubalg:iii}
If $P$ is surjective, that is if $P(A)=A$, then $ker(P)$ is a two-sided Hom-ideal in the Hom-algebra $\mathcal{A}$, meaning that
$[A , ker(P)] \subseteq ker(P) $, $[ker(P), A]  \subseteq ker(P) $ and $\alpha(ker(P))\subseteq ker(P)$.
\end{enumerate}
\end{prop}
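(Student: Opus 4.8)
The plan is to verify the three claims directly from the two defining properties of an averaging operator: the commutativity $\alpha\circ P=P\circ\alpha$ in \eqref{Palphacom} and the two equivalent forms of the averaging axiom $[P(x),P(y)]=P([P(x),y])=P([x,P(y)])$ in \eqref{avopaxiom}. Each part reduces to a one-line computation once the correct form of the axiom is selected, so I do not anticipate any genuine obstacle; the only thing to be careful about is keeping track of which slot an element of $\ker(P)$ occupies and, where convenient, invoking the skewsymmetry \eqref{skewsymmetry} of the Hom-Lie bracket.

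For part \ref{thm:PAsubalg:i}, I note first that $P(A)$ is a linear subspace since $P$ is linear. Closure under $\alpha$ follows from \eqref{Palphacom}: for any $P(x)\in P(A)$ we have $\alpha(P(x))=P(\alpha(x))\in P(A)$, so $\alpha(P(A))\subseteq P(A)$. Closure under the bracket follows from the first equality in \eqref{avopaxiom}: for $P(x),P(y)\in P(A)$, $[P(x),P(y)]=P([P(x),y])\in P(A)$, so $[P(A),P(A)]\subseteq P(A)$. Thus $P(A)$ is an $\alpha$-invariant subalgebra, i.e. a Hom-subalgebra.

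For part \ref{thm:PAsubalg:ii}, I take $P(x)\in P(A)$ and $z\in\ker(P)$, so $P(z)=0$. For the first inclusion I use \eqref{avopaxiom} in the form $[P(u),P(v)]=P([P(u),v])$ with $u=x$, $v=z$: the left-hand side is $[P(x),P(z)]=[P(x),0]=0$, hence $P([P(x),z])=0$, that is $[P(x),z]\in\ker(P)$. For the second inclusion I use the form $[P(u),P(v)]=P([u,P(v)])$ with $u=z$, $v=y$: the left-hand side is $[P(z),P(y)]=[0,P(y)]=0$, hence $P([z,P(y)])=0$, that is $[z,P(y)]\in\ker(P)$. (Alternatively, the second inclusion follows from the first via skewsymmetry \eqref{skewsymmetry}.)

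For part \ref{thm:PAsubalg:iii}, surjectivity gives $P(A)=A$, so the two inclusions of part \ref{thm:PAsubalg:ii} upgrade at once to $[A,\ker(P)]\subseteq\ker(P)$ and $[\ker(P),A]\subseteq\ker(P)$. The remaining $\alpha$-invariance is immediate from \eqref{Palphacom}: if $z\in\ker(P)$ then $P(\alpha(z))=\alpha(P(z))=\alpha(0)=0$, so $\alpha(z)\in\ker(P)$, giving $\alpha(\ker(P))\subseteq\ker(P)$. Together these show $\ker(P)$ is a two-sided Hom-ideal; I note that this final $\alpha$-invariance in fact needs only \eqref{Palphacom} and holds even without surjectivity.
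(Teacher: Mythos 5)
Your proof is correct and follows essentially the same route as the paper's: part \ref{thm:PAsubalg:i} via linearity, the commutation relation \eqref{Palphacom} and the averaging axiom \eqref{avopaxiom}; part \ref{thm:PAsubalg:ii} by applying the two forms of \eqref{avopaxiom} to an element of $\ker(P)$ in each slot; and part \ref{thm:PAsubalg:iii} as the surjective specialization of \ref{thm:PAsubalg:ii} together with $\alpha\circ P=P\circ\alpha$. The only caveat is your parenthetical remark that the second inclusion in \ref{thm:PAsubalg:ii} follows from the first by skewsymmetry: the proposition is stated for general Hom-algebras, where \eqref{skewsymmetry} need not hold, but since this is merely an aside and your main argument uses only \eqref{avopaxiom}, the proof stands as written.
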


\begin{proof}
Since $P$ is a linear operator, $P(A)$ is a linear subspace of $A$.
\begin{enumerate}
\item
By \eqref{Palphacom}, $P$ and $\alpha$ commute, and hence
$\alpha(P(x))=P(\alpha(x))\in P(A).$
Since, for any
$x,y\in P(A)$, there exist
$x',y'\in A$ such that  $x = P(x'), \ y = P(y'),$
the averaging operator axiom \eqref{avopaxiom} yields $[x,y] = [P(x'),P(y')] = P([P(x'),y']) \in P(A)$.
\item Let  $x = P(x')$ for some $x' \in A$.
If $y' \in ker(P)$, then \eqref{avopaxiom} yields
     $P([x,y']) = P([P(x'),y']) = [P(x'),P(y')] = 0$,
and hence, $[P(A) , ker(P)]  \subseteq ker(P) $.
Let  $y = P(y')$ for some $y' \in A$.
If $x' \in ker(P)$, then \eqref{avopaxiom} yields
     $P([x',y]) = P([x',P(y')]) = [P(x'),P(y')] = 0$, and hence  $[ker(P), P(A)]  \subseteq ker(P) $.
\item The first two inclusions are a special case of \ref{thm:PAsubalg:ii}, and $\alpha(ker(P))\subseteq ker(P)$ follows from commutativity of $\alpha$ and $P$.
\qedhere
      \end{enumerate}
\end{proof}

The defining axioms of Hom-Leibniz algebras and Hom-Lie algebras are multilinear in their arguments and are inherited by Hom-subalgebras.
\begin{cor}
Let $P$ be an averaging operator on a Hom-algebra $(A,[\cdot,\cdot],\alpha)$. Then,
if $(A,[\cdot,\cdot],\alpha)$ is a skewsymmetric Hom-algebra, or a Hom-Leibniz algebra or a Hom-Lie algebra,  then  $(P(A),[\cdot,\cdot],\alpha)$ is
also a skewsymmetric Hom-algebra, or a Hom-Leibniz algebra or a Hom-Lie algebra respectively.
\end{cor}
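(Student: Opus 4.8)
The plan is to deduce the statement from Proposition \ref{thm:PAsubalg} \ref{thm:PAsubalg:i} by the elementary principle that an algebraic identity universally quantified over all elements of a structure is inherited by any subset that is closed under the operations appearing in the identity. First I would record that, by Proposition \ref{thm:PAsubalg} \ref{thm:PAsubalg:i}, the linear subspace $P(A)$ is closed under the bracket $[\cdot,\cdot]$ and under $\alpha$: for $x,y\in P(A)$ one has $[x,y]\in P(A)$ and $\alpha(x)\in P(A)$. Consequently the restrictions of $[\cdot,\cdot]$ and of $\alpha$ to $P(A)$ are well-defined as a bilinear product and a linear map on $P(A)$, so that $(P(A),[\cdot,\cdot],\alpha)$ is itself a Hom-algebra whose operations agree with those of $\mathcal{A}$ on $P(A)$.

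Next I would observe that each of the three defining conditions is an identity quantified over all elements: skewsymmetry \eqref{skewsymmetry} over all $x,y\in A$, and the Hom-Jacobi identity \eqref{homliejacobiidentity} and the Hom-Leibniz identity \eqref{Leibnizidentity} over all $x,y,z\in A$. Since $P(A)\subseteq A$, any such identity valid for all elements of $A$ is in particular valid for all elements of $P(A)$; and because $P(A)$ is closed under $[\cdot,\cdot]$ and $\alpha$, every bracket and every application of $\alpha$ occurring in these equalities, when evaluated at arguments lying in $P(A)$, again lands in $P(A)$. Hence the equalities are genuine identities of the restricted Hom-algebra $(P(A),[\cdot,\cdot],\alpha)$, and not merely relations that happen to hold in the ambient $A$.

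Finally I would conclude case by case: if $A$ is skewsymmetric, then \eqref{skewsymmetry} restricts to $P(A)$, so $(P(A),[\cdot,\cdot],\alpha)$ is skewsymmetric; if $A$ is a Hom-Leibniz algebra, then \eqref{Leibnizidentity} restricts, so $(P(A),[\cdot,\cdot],\alpha)$ is Hom-Leibniz; and if $A$ is a Hom-Lie algebra, then both \eqref{skewsymmetry} and \eqref{homliejacobiidentity} restrict, so $(P(A),[\cdot,\cdot],\alpha)$ is Hom-Lie. The only substantive point is the closure of $P(A)$ under the two operations, and this is exactly what Proposition \ref{thm:PAsubalg} \ref{thm:PAsubalg:i} supplies; thus I do not expect a genuine obstacle, and the proof should reduce to a short appeal to that proposition together with the inheritance principle recorded above.
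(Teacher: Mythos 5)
Your proposal is correct and takes essentially the same route as the paper: the paper derives this corollary from Proposition \ref{thm:PAsubalg} \ref{thm:PAsubalg:i} together with the remark immediately preceding the corollary that the defining axioms are multilinear identities inherited by Hom-subalgebras, which is precisely the closure-plus-restriction argument you spell out. There is no gap; your write-up merely makes explicit what the paper leaves implicit.
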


\begin{prop}\label{prop:Hom-Lieto Hom-Leibniz}
If $\mathcal{A}=( A, [\cdot,\cdot], \alpha)$ is a Hom-Leibniz algebra and $P$ is an averaging operator on $\mathcal{A}$, then with
$\{\cdot,\cdot\}:A\times A\rightarrow A$ defined for all $x,y\in A$ by $\{x,y\}=[P(x),y],$
\begin{enumerate}
\item the triple   $\mathcal{A}'=( A, \{\cdot,\cdot\}, \alpha)$ is a Hom-Leibniz algebra;
\item If $\mathcal{A}=( A, [\cdot,\cdot], \alpha)$ is a Hom-Leibniz algebra, then $\mathcal{A}'=( A, \{\cdot,\cdot\}, \alpha)$ is a Hom-Lie algebra if and only if $[P(x),y]=-[P(y),x]$ for all $x,y\in A$;
\item If $\mathcal{A}=( A, [\cdot,\cdot], \alpha)$ is a Hom-Lie algebra, and the averaging linear operator $P$ is surjective, that is $P(A)=A$, then
$\mathcal{A}'=( A, \{\cdot,\cdot\}, \alpha)$ is a Hom-Lie algebra.
\end{enumerate}
\end{prop}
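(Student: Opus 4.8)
The plan is to handle the three parts in order, since (ii) rests on (i) and (iii) rests on both (i) and (ii). For part (i), I would verify the Hom-Leibniz identity \eqref{Leibnizidentity} for the new bracket by direct substitution. Writing $\{x,y\}=[P(x),y]$ and using the commutativity relation \eqref{Palphacom} in the form $P\circ\alpha=\alpha\circ P$, the left-hand side $\{\alpha(x),\{y,z\}\}$ becomes $[\alpha(P(x)),[P(y),z]]$. On the right-hand side, the first term $\{\{x,y\},\alpha(z)\}=[P([P(x),y]),\alpha(z)]$ collapses, via the averaging axiom \eqref{avopaxiom}, to $[[P(x),P(y)],\alpha(z)]$, while the second term $\{\alpha(y),\{x,z\}\}$ becomes $[\alpha(P(y)),[P(x),z]]$ again by \eqref{Palphacom}. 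The equation to be checked is then exactly the original Hom-Leibniz identity \eqref{Leibnizidentity} evaluated at the triple $(P(x),P(y),z)$, which holds by hypothesis; so $\mathcal{A}'$ is a Hom-Leibniz algebra. This step is purely computational and uses only the first equality of \eqref{avopaxiom}.

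For part (ii), I would combine (i) with the structural observation that a skewsymmetric Hom-Leibniz algebra is precisely a Hom-Lie algebra, Hom-Lie algebras being the intersection of the classes of Hom-Leibniz and skewsymmetric algebras. Since $\mathcal{A}'$ is already a Hom-Leibniz algebra by (i), it is a Hom-Lie algebra if and only if its bracket is skewsymmetric, that is $\{x,y\}=-\{y,x\}$ for all $x,y\in A$, which unfolds to the stated condition $[P(x),y]=-[P(y),x]$. No further Hom-Jacobi verification is required once (i) is in hand.

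For part (iii), the strategy is to check the skewsymmetry criterion of (ii) using surjectivity of $P$, and this is where I expect the only real difficulty. The naive route, combining \eqref{avopaxiom} with the skewsymmetry \eqref{skewsymmetry} of the Hom-Lie bracket, yields merely $P([P(x),y]+[P(y),x])=0$, that is $[P(x),y]+[P(y),x]\in\ker P$, which need not vanish; overcoming this is the main obstacle, since the averaging axiom alone controls skewsymmetry of $\{\cdot,\cdot\}$ only modulo $\ker P$. The resolution is to apply surjectivity to the second slot: given $y\in A$, write $y=P(y')$, and then the averaging axiom in the form $[P(x),P(y')]=P([x,P(y')])$ gives $[P(x),y]=P([x,y])$ for all $x,y\in A$. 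Hence $\{x,y\}=P([x,y])$, and now skewsymmetry of the original bracket together with linearity of $P$ yields at once $\{x,y\}+\{y,x\}=P([x,y]+[y,x])=0$. The criterion of (ii) is therefore satisfied, and $\mathcal{A}'$ is a Hom-Lie algebra.
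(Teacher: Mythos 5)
Your proposal is correct, and for part (i) it is essentially the paper's own argument: both proofs verify the Hom-Leibniz identity \eqref{Leibnizidentity} for $\{\cdot,\cdot\}$ by rewriting each term via $\alpha\circ P=P\circ\alpha$ and the averaging axiom \eqref{avopaxiom}, reducing everything to the original identity evaluated at the triple $(P(x),P(y),z)$. The difference is in coverage: the paper's printed proof stops after this computation and gives no argument at all for parts (ii) and (iii), whereas you supply both. Your part (ii) correctly invokes the structural fact (stated as a remark in the paper) that Hom-Lie algebras are exactly the skewsymmetric Hom-Leibniz algebras; since $\mathcal{A}'$ is Hom-Leibniz by (i), being Hom-Lie reduces to skewsymmetry of $\{\cdot,\cdot\}$, which is verbatim the condition $[P(x),y]=-[P(y),x]$. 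Your part (iii) is the genuinely valuable addition: you correctly diagnose that the averaging axiom alone only yields $[P(x),y]+[P(y),x]\in\ker P$, which is insufficient, and you close the gap by using surjectivity in the second slot, writing $y=P(y')$ and applying $[P(x),P(y')]=P([x,P(y')])$ to obtain the identity $\{x,y\}=P([x,y])$ for all $x,y\in A$; skewsymmetry of $\{\cdot,\cdot\}$ then follows immediately from skewsymmetry \eqref{skewsymmetry} of $[\cdot,\cdot]$ and linearity of $P$, and (ii) finishes the argument. This is a clean, complete treatment of the two parts the paper leaves unproved.
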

\begin{proof}
Let $x,y,z\in A$, Then \eqref{Leibnizidentity} in
$\mathcal{A}'$ is proved as follows:
\begin{align*}
&\{\alpha(x),\{y,z\}\}-\{\{x,y\},\alpha(z)\}-\{\alpha(y),\{x,z\}\}\\
&=[P(\alpha(x)),[P(y),z]]-[P([P(x),y],\alpha(z)]-[P(\alpha(y)),[P(x),P(z)]]\\
&\begin{array}[b]{l}
\text{\scriptsize ($\alpha$, $P$ commute)}\\
=[\alpha(P(x)),[P(y),z]]-[[P(x),P(y)],\alpha(z)]
-[\alpha(P(y)),[P(x),z]]
\end{array}
\begin{array}[b]{l}
\text{\scriptsize ($\mathcal{A}$ is Hom-Leibniz algebra)} \\
=0.
\end{array}
\qedhere
\end{align*}
\end{proof}

\begin{prop} \label{prop:avopslincombcompinv}
Let $\{P_j\}_{1\leq j \leq n}$ be a finite set of averaging operators on a Hom-Lie algebra $(A,[\cdot,\cdot],\alpha)$, and $\{\lambda_j\}_{1\leq j \leq n}\subseteq \mathbb{K}$. Then
\begin{enumerate}
\item \label{item:avopslincomb}
The operator $S=\sum_{j=1}^n\lambda_jP_j$ is an averaging operator on $A$ if
\begin{eqnarray*}
\sum_{\substack{ j,k=1 \\  i\neq j }}^n
\lambda_j\lambda_k P_j([P_k(x),y]) = \sum_{\substack{ j,k=1 \\  j\neq k }}^n\lambda_j\lambda_k [P_j(x),P_k(y)]
\end{eqnarray*}
\item \label{item:avopscomp} If $P_j \circ P_k = P_k \circ P_j$ for $1\leq k,j\leq n$, then
$T={\displaystyle \prod_{j=1}^{n}} P_j = P_1\circ \dots \circ P_n$ is an averaging operator.
\item \label{item:avopspoly} If $P_j \circ P_k = P_k \circ P_j$ for $1\leq k,j\leq n$, then for any polynomial $F \in \mathbb{K}[t_1,\dots,t_n]$ with zero constant term $F(0,\dots,0)=0$,
the operator $F(P_1,\dots,P_n)$ is an averaging operator.
\item \label{item:avopsinv} If an averaging operator $P$ is invertible, then $P^{-1}$ is an averaging operator.
\end{enumerate}
\end{prop}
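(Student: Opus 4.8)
The plan is to establish the four parts largely separately, using the composition statement (ii) as the engine for (iii). For (i), I would expand both sides of the averaging axiom bilinearly. The commutation \(\alpha\circ S=S\circ\alpha\) is immediate from linearity together with \(\alpha\circ P_j=P_j\circ\alpha\). Writing
\[ [S(x),S(y)]=\sum_{j,k}\lambda_j\lambda_k[P_j(x),P_k(y)],\qquad S([S(x),y])=\sum_{j,k}\lambda_j\lambda_k P_j([P_k(x),y]), \]
I would split each double sum into its diagonal (\(j=k\)) and off-diagonal (\(j\neq k\)) parts. The diagonal parts already agree term by term, since each \(P_j\) is itself averaging, \([P_j(x),P_j(y)]=P_j([P_j(x),y])\). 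Hence the averaging axiom for \(S\) — which in the skew-symmetric setting only needs its one-sided form — reduces exactly to equality of the two off-diagonal sums, i.e. to the stated hypothesis; this argument in fact shows the condition is necessary as well as sufficient.

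For (ii), the engine is the lemma that the composite of two commuting averaging operators is again averaging. Given averaging \(P,Q\) with \(PQ=QP\), I would compute \([PQ(x),PQ(y)]\) by first applying the averaging identity for \(P\) to the outer pair, obtaining \(P([PQ(x),Q(y)])\); rewriting the inner bracket as \([QP(x),Q(y)]\) by commutativity and applying the averaging identity for \(Q\) then collapses everything to \(PQ([PQ(x),y])\), which is precisely the averaging axiom for \(PQ\). Commutation with \(\alpha\) is automatic. The general statement follows by induction: with \(T=(P_1\cdots P_{n-1})\circ P_n\), the factor \(P_1\cdots P_{n-1}\) is averaging by the induction hypothesis and commutes with \(P_n\) because each \(P_i\) does, so the lemma applies.

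For (iv), I would substitute \(u=P^{-1}(x)\), \(v=P^{-1}(y)\) into \([P(u),P(v)]=P([P(u),v])=P([u,P(v)])\). Since \(P(u)=x\) and \(P(v)=y\), this yields \([x,y]=P([P^{-1}(x),y])=P([x,P^{-1}(y)])\), that is, \(P^{-1}([x,y])=[P^{-1}(x),y]=[x,P^{-1}(y)]\). Feeding \([P^{-1}(x),y]\) back into the last identity gives \(P^{-1}([P^{-1}(x),y])=[P^{-1}(x),P^{-1}(y)]\), which is the averaging axiom for \(P^{-1}\); conjugating \(\alpha P=P\alpha\) by \(P^{-1}\) gives \(\alpha\circ P^{-1}=P^{-1}\circ\alpha\).

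Finally, for (iii) the natural route is to write \(F(P_1,\dots,P_n)\) as a \(\mathbb{K}\)-linear combination of monomials \(P_1^{a_1}\cdots P_n^{a_n}\) with no constant term; each monomial is a composite of commuting averaging operators, hence averaging by (ii), and any two monomials commute. One then wants to assemble these through (i), and here lies the main obstacle: (i) does not hand over sums of averaging operators for free, since it demands cancellation of the off-diagonal cross-terms. For powers of a single operator this cancellation does hold, because the averaging identities force \([P(x),P^2(y)]=P^2([P(x),y])\) and \([P^2(x),P(y)]=P([P^2(x),y])\), so the two mixed sums in (i) coincide after interchanging the two factors. The crux — already visible in the degree-one case \(P_1+P_2\), so not a mere technicality — is the cross-term identity
\[ [M(x),N(y)]+[N(x),M(y)]=M([N(x),y])+N([M(x),y]) \]
for two \emph{distinct} commuting averaging operators \(M,N\); unlike the single-variable case one cannot reduce both bracket slots to images of a single operator, and the argument must route entirely through commutativity. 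Settling this identity, or, failing that, pinning down the extra hypotheses under which it holds, is the step on which the whole proposition hinges and where I would concentrate the effort.
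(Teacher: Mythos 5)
Your treatments of (i), (ii) and (iv) are correct and essentially identical to the paper's own arguments: for (i) the same bilinear expansion with a diagonal/off-diagonal split (your added observation that the hypothesis is also \emph{necessary} is correct, since the diagonal parts always agree); for (ii) the same two-operator composition computation run through an induction; for (iv) the same substitution-plus-injectivity trick (your version is in fact cleaner than the paper's, which garbles the auxiliary element $x'$ but follows the identical idea).

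The substantive issue is (iii), and here your caution is not a defect of your proposal but a diagnosis of a defect in the paper. The paper's entire proof of (iii) is exactly the one-line argument you reject as insufficient: ``$F(P_1,\dots,P_n)$ is a linear combination of compositions of averaging operators, hence averaging by (ii) and (i)'' --- with no verification of the cross-term hypothesis that (i) requires. Moreover, the cross-term identity you isolate as the crux,
\begin{equation*}
[M(x),N(y)]+[N(x),M(y)]=M([N(x),y])+N([M(x),y]),
\end{equation*}
is \emph{false} for general distinct commuting averaging operators, so statement (iii) itself is false and no amount of effort will settle it affirmatively. A minimal counterexample: let $A=\mathbb{K}h\oplus\mathbb{K}e$ be the two-dimensional non-abelian Lie algebra with $[h,e]=e$, regarded as a Hom-Lie algebra with $\alpha=\mathrm{id}$; let $P_1=\mathrm{id}$ and let $P_2=Q$ be the projection onto $\mathbb{K}h$ along $\mathbb{K}e$. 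Both are averaging ($Q$ by the paper's own criterion for idempotents, since $[\mathbb{K}h,\mathbb{K}h]=0$ and $[\mathbb{K}h,\mathbb{K}e]\subseteq\mathbb{K}e$), and they commute. Taking $F(t_1,t_2)=t_1+t_2$ gives $S=\mathrm{id}+Q$, and for $x=e$, $y=h$ one computes
\begin{equation*}
[S(e),S(h)]=[e,2h]=-2e,\qquad S([S(e),h])=S([e,h])=S(-e)=-e,
\end{equation*}
so $S$ violates the averaging axiom (equivalently, the required relation $[x,Q(y)]=Q([x,y])$ fails at $(e,h)$). Only the one-variable case $F\in\mathbb{K}[t]$ survives, by exactly the mechanism you describe: $[P^a(x),P^b(y)]=P^b([P^a(x),y])$ for $a,b\geq 1$, after which the index swap matches the two off-diagonal sums; this is what the paper's subsequent remark on polynomials in a single averaging operator actually needs. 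So the step ``on which the whole proposition hinges'' cannot be settled as stated, and the honest resolution is to restrict (iii) to one variable or to add the cross-term hypothesis of (i).
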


\begin{proof}
\ref{item:avopslincomb}
The map $S$ is a linear operator on $A$ as a linear combination of the linear operators $\{P_j\}_{1\leq j \leq n}$, and $\alpha\circ S= S\circ\alpha$, since  $\alpha\circ P_j=P_j \circ\alpha,\ 1\leq j \leq n$. For all
$x, y \in A$.
  \begin{align*}
S([S(x),y]) =& (\sum_{j=1}^n\lambda_jP_j) ([(\sum_{k=1}^n\lambda_kP_k)(x),y]) =  (\sum_{j=1}^n\lambda_jP_j) ([\sum_{k=1}^n\lambda_kP_k(x),y])
\\
= &  \sum_{j,k=1}^n\lambda_j\lambda_k P_j ([P_k(x),y]) \\
= & \sum_{j=1}^n\lambda_j^2 P_j ([P_k(x),y]) +
\sum_{\substack{ j,k=1 \\  j\neq k}}^n\lambda_j\lambda_k P_j ([P_k(x),y]) \\
=& \sum_{j=1}^n\lambda_j^2 [P_j(x), P_j(y)]+
\sum_{\substack{ j,k=1 \\  j\neq k}}^n\lambda_j\lambda_k  ([P_j(x),P_k(y)]) \\
=& \sum_{j=1}^n [\lambda_j P_j(x), \lambda_j P_j(y)]
+
\sum_{\substack{ j,k=1 \\  j\neq k}}^n  ([\lambda_j P_j(x),\lambda_k P_k(y)]) \\
=& [\sum_{j=1}^n\lambda_jP_j,\sum_{k=1}^n\lambda_kP_k] =[S(x),S(y)].
\end{align*}
\newline
\ref{item:avopscomp} The operator $T_n =P_1 \circ \dots \circ P_{n}$ is linear as a composition of the linear operators, and also $\alpha \circ T_n = T_n\circ \alpha$ since $\alpha \circ P_j = P_j \circ \alpha$ for all $1\leq j \leq n$.
For $n=1$, $T_{1}={\displaystyle \prod_{j=1}^n} P_j = P_1$ is an averaging operator. Suppose that for $n = k$ the statement holds, that is $T_k=P_1 \circ \dots \circ  P_{k}$ is an averaging operator. Then, for all $x, y \in A$,
\begin{align*}
 T_{k+1}([T_{k+1}(x),y]) =& T_k \circ P_{k+1} ([T_k(P_{k+1}(x)),y]) \\^\text{($T_k,P_{k+1}$ commute)}=& T_k ( P_{k+1} ([P_{k+1}(T_k(x)),y]))\\  ^\text{($P_{k+1}$ is an averaging operator)}=& T_k ([ P_{k+1} (T_k(x)),P_{k+1}(y)])\\ ^\text{($T_k,P_{k+1}$ commute)}=& T_k ( [T_k (P_{k+1}(x)),P_{k+1}(y)])
\\  ^\text{($T_k$ is an averaging operator)}=&[T_k ( P_{k+1} (x)),T_k(P_{k+1}(y))]\\ =& [T_{k+1}(x), T_{k+1}(y)].
\end{align*}
proving that $T_n$ is averaging operator for $n=k+1$, which completes the proof by the principle of mathematical induction.    \\
\ref{item:avopspoly} Since $F(P_1,\dots,P_n)$ is a linear combination of compositions of averaging operators, it is also an averaging operator by \ref{item:avopscomp} and \ref{item:avopslincomb}.\\
\ref{item:avopsinv} It is clear that if $P$ is invertible, then $P^{-1}$ is a linear map of $A$ and $\alpha\circ P^{-1}=P^{-1}\circ\alpha$.
 Let $x, y \in A$.
Since $P$ is surjective, there exists $ y' \in A$ such that
 $P(x') = P^{-1}(x)$. Then,
 \begin{align*}
 P(P^{-1}([P^{-1}(x),y]))& = [P^{-1}(x),y]= [P^{-1}(x),P(P^{-1}(y)]\\
 & = [P(x'),P(P^{-1}(y)]= P([P(x'),P^{-1}(y)]) = P([P^{-1}(x),P^{-1}(y)]).
 \end{align*}
 Since $P$ is injective, we have
  $P^{-1}([P^{-1}(x),y]) = [P^{-1}(x),P^{-1}(y)]$.
\end{proof}

\begin{rmk}
By Proposition \ref{prop:avopslincombcompinv}, if $P$ and $Q$ are two averaging operators on a Hom-Lie algebra $(A,[\cdot,\cdot],\alpha)$, then
\begin{enumerate}
\item for any $\lambda\in \mathbb{K}$, $\lambda P$ is an averaging operator on $A$;
\item If $P([Q(x),y]) + Q([P(x),y]) = [Q(x),P(y)] + [P(x),Q(y)]$  holds
    for all $x, y \in A$, then $P + Q$ is an averaging operator.
\label{2avgOprsomme}
\item If $P \circ Q$ = $Q \circ P$, then $P \circ Q$ is an averaging operator;
\label{2avgOprcomposition}
\item for any polynomial $F \in \mathbb{K}[t]$ with zero constant term $F(0)=0$, the operator $F(P)$ is an averaging operator.
\end{enumerate}
\end{rmk}
\begin{rmk}
It is known that $P$ is an averaging operator on a Hom-Lie algebra $(A,[\cdot,\cdot],\alpha)$ if and only if $\lambda P$ is an averaging operator on $A$ for $\lambda\in\mathbb{K}^{\ast}=\mathbb{K}\setminus \{0\}$. So the set of averaging operators on any Hom-Lie algebra carries an action of $\mathbb{K}^{\ast}$ by scalar multiplication.
\end{rmk}

Next we provide the necessary and sufficient conditions for an idempotent linear operator to be an averaging operator.
\begin{defn} An idempotent operator on a Hom-Lie algebra $(A,[\cdot,\cdot],\alpha)$ over $\mathbb{K}$ is a linear map $P: A\rightarrow A$ satisfying $\alpha\circ P=P\circ\alpha$ and $P^{2}=P.$
\end{defn}

\begin{rmk}
Recall that there is a bijection
$$ \{\text{idempotent linear operators on } A\}\leftrightarrow
\{\text{direct sum decompositions}\ A=A_0\oplus A_1\}$$
where $A_0=im(P)$ and $A_1=ker(P)$.
The linear map $P$ corresponding to $A=A_0\oplus A_1$ is called the projection onto $A_0$ along $A_1$.
If $P$ is the projection onto $A_0$ along $A_1$, then
$I-P$ is the projection onto $A_1$ along $A_0$ since
$(I-P)^2=I-2P+P^2=I-2P+P=I-P$ and $im(I-P)=ker(P),\ Ker(I-P)=im(P)$.
\end{rmk}

\begin{prop}
Let $(A,[\cdot,\cdot],\alpha)$ be a Hom-Lie algebra and let $P:A\to A$ be an idempotent linear map. Let $A=A_0\oplus A_1$ be the corresponding linear decomposition. Then $P$ is an averaging operator if and only if \begin{equation}\label{eq:grad}
[A_0,A_0]\subseteq A_0, \quad [A_0,A_1]\subseteq A_1.
\end{equation}
\end{prop}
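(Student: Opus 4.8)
The plan is to translate the averaging axiom into component conditions relative to the decomposition $A=A_0\oplus A_1$, where $A_0=\mathrm{im}(P)$ and $A_1=\ker(P)$. First I would record the two facts that follow immediately from $P^2=P$: the operator $P$ acts as the identity on $A_0$ and as zero on $A_1$, so that for $x=x_0+x_1$ with $x_0\in A_0$, $x_1\in A_1$ one has $P(x)=x_0$; moreover, for any $z\in A$ the equivalence $P(z)=z\Leftrightarrow z\in A_0$ holds. Since an idempotent operator already commutes with $\alpha$ by definition, verifying that $P$ is an averaging operator amounts to checking only the averaging axiom, and by the skewsymmetry remark this reduces to the single identity $[P(x),P(y)]=P([P(x),y])$ for all $x,y\in A$.

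Next I would expand this identity in components. Writing $x=x_0+x_1$ and $y=y_0+y_1$, the left-hand side equals $[x_0,y_0]$, while the right-hand side equals $P([x_0,y_0+y_1])=P([x_0,y_0])+P([x_0,y_1])$ by linearity. Thus the averaging axiom is equivalent to
\[
[x_0,y_0]=P([x_0,y_0])+P([x_0,y_1])\quad\text{for all }x_0,y_0\in A_0,\ y_1\in A_1,
\]
the component $x_1$ having dropped out entirely. The crucial observation is that as $x,y$ range over $A$ the triple $(x_0,y_0,y_1)$ ranges independently over $A_0\times A_0\times A_1$, which is what lets me isolate the two summands and split the condition into two separate inclusions.

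For the forward direction, specializing to $y_1=0$ gives $[x_0,y_0]=P([x_0,y_0])$, i.e. $[x_0,y_0]\in A_0$, for all $x_0,y_0\in A_0$; this is exactly $[A_0,A_0]\subseteq A_0$. Feeding this back into the displayed identity leaves $P([x_0,y_1])=0$ for all $x_0\in A_0$, $y_1\in A_1$, i.e. $[x_0,y_1]\in\ker(P)=A_1$, which is $[A_0,A_1]\subseteq A_1$. Conversely, assuming both inclusions, for any $x,y$ we have $[x_0,y_0]\in A_0$ so $P([x_0,y_0])=[x_0,y_0]$, and $[x_0,y_1]\in A_1$ so $P([x_0,y_1])=0$; substituting these into the right-hand side recovers $[x_0,y_0]$, establishing the axiom. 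Since every step is a direct consequence of idempotency and the grading-type inclusions, I expect no serious obstacle here — the only point requiring genuine care is the bookkeeping that permits the two inclusions to be separated, namely the independent variation of the components together with the complete disappearance of $x_1$ from the identity.
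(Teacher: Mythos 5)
Your proof is correct and follows essentially the same route as the paper's: decompose $x=x_0+x_1$, $y=y_0+y_1$, reduce via skewsymmetry to the single identity $[P(x),P(y)]=P([P(x),y])$, expand both sides, and separate the two inclusions (the paper obtains $[A_0,A_0]\subseteq A_0$ by noting the bracket equals $P([P(x),y])\in P(A)=A_0$, while you set $y_1=0$ and invoke the fixed-point characterization of $\mathrm{im}(P)$ --- a trivially equivalent step). Your explicit observation that commutativity with $\alpha$ is already part of the paper's definition of an idempotent operator, so only the averaging axiom needs checking, is a small point of care that the paper leaves implicit.
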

\begin{proof}
For any $x, y\in A$, denote $x=x_0+x_1$ and $y=y_0+y_1$ with $x_i, y_i\in A_i, i=0, 1$.
Suppose $P$ is an averaging operator. Then from $P(A)=A_0$ and $[P(x),P(y)]=P([P(x),y]),$ we obtain $[A_0,A_0]\subseteq A_0$.
Then we have
\begin{eqnarray*}
[P(x),P(y)]&=&[x_0,y_0], \\ P([P(x),y])&=&P([x_0,y_0]+[x_0,y_1])=P([x_0,y_0])+P([x_0,y_1])=[x_0,y_0]+P([x_0,y_1]).
\end{eqnarray*}
Thus from \eqref{averaging operator 2} we obtain  $P([x_0,y_1])=0$ for all $x_i,y_i\in A_i, i=0, 1$. Therefore \eqref{eq:grad} holds since $A_1=ker P$ by the definition of $P$.
Conversely, suppose \eqref{eq:grad} holds. Then we have
$$ P([P(x),y])=P([x_0,y_0]+[x_0,y_1])=P([x_0,y_0])+P([x_0,y_1])=[x_0,y_0]=[P(x),P(y)].$$
Thus $P$ is an averaging operator.
\end{proof}
\begin{cor}
An idempotent endomorphism $P:A\to A$ is an averaging operator.
\end{cor}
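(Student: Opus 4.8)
The plan is to deduce this directly from the preceding Proposition, which characterizes idempotent averaging operators through the bracket conditions \eqref{eq:grad} on the decomposition $A = A_0 \oplus A_1$. Since $P$ is assumed to be an idempotent endomorphism, it is in particular an idempotent linear map commuting with $\alpha$, so that Proposition is applicable and it suffices to verify that $[A_0,A_0]\subseteq A_0$ and $[A_0,A_1]\subseteq A_1$, where $A_0=\mathrm{im}(P)$ and $A_1=\ker(P)$.

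The key elementary fact I would record first is that, for any idempotent operator, the image coincides with the fixed-point set, that is $A_0=\mathrm{im}(P)=\{a\in A\mid P(a)=a\}$; indeed, if $a=P(b)$ then $P(a)=P^2(b)=P(b)=a$, and conversely any fixed point lies in the image. With this identification in hand, both inclusions follow immediately from the endomorphism property $P([x,y])=[P(x),P(y)]$. For $x,y\in A_0$ one has $P([x,y])=[P(x),P(y)]=[x,y]$, so $[x,y]$ is again a fixed point and hence lies in $A_0$; for $x\in A_0$ and $y\in A_1=\ker(P)$ one has $P([x,y])=[P(x),P(y)]=[P(x),0]=0$, so $[x,y]\in\ker(P)=A_1$.

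Having checked \eqref{eq:grad}, the Proposition yields that $P$ is an averaging operator. There is no genuine obstacle here: the only points requiring care are (i) confirming that an idempotent endomorphism really does satisfy the hypotheses of the Proposition, in particular that $\alpha\circ P=P\circ\alpha$, which is part of the definition of an endomorphism of a Hom-Lie algebra and supplies axiom \eqref{Palphacom}, and (ii) using the description of $A_0$ as the fixed-point space, rather than merely as the image, when verifying $[A_0,A_0]\subseteq A_0$. Alternatively, one could bypass the Proposition and verify the averaging axiom \eqref{avopaxiom} directly: since $P^2=P$ and $P$ preserves the bracket, $P([P(x),y])=[P^2(x),P(y)]=[P(x),P(y)]$ and symmetrically $P([x,P(y)])=[P(x),P(y)]$, which is exactly \eqref{avopaxiom}. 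This second route is arguably the shortest and makes the roles of idempotency and of the homomorphism property most transparent; I would likely present it as the main argument and mention the Proposition-based derivation as the reason this is stated as a corollary.
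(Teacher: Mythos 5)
Your proposal is correct, and it actually contains two arguments. The first half is essentially the paper's proof: the paper also sets $A_0=\mathrm{im}(P)$, $A_1=\ker(P)$ and invokes the preceding Proposition via \eqref{eq:grad}, justifying it by the observation that $A_1$, being the kernel of an endomorphism, is an ideal. You go slightly further than the paper here, and usefully so: the ideal property only gives $[A_0,A_1]\subseteq A_1$, while the other half of \eqref{eq:grad}, namely $[A_0,A_0]\subseteq A_0$, is left implicit in the paper; your fixed-point description $A_0=\{a\in A\mid P(a)=a\}$ together with $P([x,y])=[P(x),P(y)]=[x,y]$ supplies exactly this missing inclusion. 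Your second, direct argument --- verifying \eqref{avopaxiom} from $P^2=P$ and multiplicativity via $P([P(x),y])=[P^2(x),P(y)]=[P(x),P(y)]$ and its mirror image --- is a genuinely different route that the paper does not take: it bypasses the decomposition and the Proposition entirely, is shorter, and makes transparent that idempotency and the homomorphism property are the only ingredients; its only cost is that it does not explain why the statement sits naturally as a corollary of the Proposition. You are also right to flag the requirement $\alpha\circ P=P\circ\alpha$ from \eqref{Palphacom}: the paper's proof never mentions it, silently relying on it being part of what ``endomorphism of a Hom-Lie algebra'' means, whereas you make this hypothesis explicit --- a point worth keeping in whichever version you present.
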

\begin{proof}
Let $A_0:=im P$ and $A_1:=ker P$. Then we have $A=A_0\oplus A_1$ and $P$ is the projection to $A_0$ along $A_1$. Since $A_1$ is an ideal of $A$, then \eqref{eq:grad} holds. Hence $P$ is an averaging operator.
\end{proof}

\section{On homogeneous averaging operators on \hmath $q$-deformed Witt Hom-algebra}\label{section3}
In this section we classify the  homogeneous averaging operators on the $q$-deformed
Witt algebra $\mathcal{V}^{q}$ and we give the induced Hom-Leibniz algebras
from the averaging operators on the $q$-deformed
Witt algebra $\mathcal{V}^{q}$ and its multiplicativity condition is studied.
    \begin{defn}
   A homogeneous operator $F$ with degree $d\in\mathbb{Z}$ on the
$q$-deformed Witt Hom-algebra $\mathcal{V}^{q}$ is a linear operator on $\mathcal{V}^{q}$ satisfying
$F(\mathcal{V}^{q}_m)\subseteq \mathcal{V}^{q}_{m+d}$ for all $m\in\mathbb{Z}$.
    \end{defn}
Therefore, a homogeneous averaging operator $P_d$ with degree $d$ on the $q$-deformed Witt Hom-algebra
$\mathcal{V}^{q}$ is an averaging operator on $\mathcal{V}^{q}$ of the following form
\begin{equation}\label{averaging Witt}
    P_{d}(L_{m})=f(m+d)L_{m+d},\quad \forall\   m\in\mathbb{Z},
\end{equation}
where $f$ is a $\mathbb{K}$-valued function defined on $\mathbb{Z}.$

Let $P_d$ be a homogeneous averaging operator with degree $d$ on the $q$-deformed Witt Hom-algebra
$(\mathcal{V}^{q},[\cdot,\cdot],\alpha)$ satisfying \eqref{averaging Witt}. Then by \eqref{witt bracket} and \eqref{averaging operator 2},
  \begin{align*} {} %
 [P_{d}(L_m),P_{d}(L_n)]
  &  =[f(m+d)L_{m+d},f(n+d)L_{n+d}] \\
& =f(m+d)f(n+d)(\{m+d\}-\{n+d\})L_{m+n+2d}, \\
 P_d([P_d(L_m),L_n])
  & =P_d([f(m+d)L_{m+d},L_{n}]) \\
&    =f(m+d)f(m+n+2d)(\{m+d\}-\{n\})L_{m+n+2d}.
\end{align*}
We see that the function $f$ satisfies for all $m,n\in \mathbb{Z}$,
$$ f(m+d)f(n+d)(\{m+d\}-\{n+d\}) = f(m+d)f(m+n+2d)(\{m+d\}-\{n\}),$$
or equivalently, after changing $m\rightarrow m-d$ and $n\rightarrow n-d$,
\begin{equation}
 f(m)\big(f(n)  (\{m\}-\{n\})-f(m+n)(\{m\}-\{n-d\})\big)=0. \label{firsteqwittgeneral}
\end{equation}

\begin{lem}\label{commuteWitt}
If $P_{d}$ is a non-zero averaging operator on the $q$-deformed Witt Hom-algebra
$(\mathcal{V}^{q},[\cdot,\cdot],\alpha)$ with degree $d$, then
$\alpha\circ P_{d}=P_{d}\circ\alpha$ if and only if
$q^d = 1$.
\end{lem}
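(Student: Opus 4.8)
The plan is to compute both sides of the commutativity relation $\alpha\circ P_d = P_d\circ\alpha$ directly on the homogeneous basis $\{L_m\}_{m\in\mathbb{Z}}$, and then extract the scalar condition that makes them coincide. Recall from Example \ref{ex1} that $\alpha(L_n) = (1+q^n)L_n$, and from \eqref{averaging Witt} that $P_d(L_m) = f(m+d)L_{m+d}$. First I would write
\begin{align*}
(\alpha\circ P_d)(L_m) &= \alpha\big(f(m+d)L_{m+d}\big) = f(m+d)(1+q^{m+d})L_{m+d},\\
(P_d\circ\alpha)(L_m) &= P_d\big((1+q^m)L_m\big) = (1+q^m)f(m+d)L_{m+d}.
\end{align*}
Since $\{L_{m+d}\}$ is a basis, equality of these two expressions for every $m\in\mathbb{Z}$ is equivalent to the scalar identity
\begin{equation*}
f(m+d)(1+q^{m+d}) = f(m+d)(1+q^m),\quad\text{for all } m\in\mathbb{Z},
\end{equation*}
which simplifies to $f(m+d)\,q^m(q^d - 1) = 0$ for all $m\in\mathbb{Z}$.

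Next I would argue the two directions. If $q^d = 1$, the factor $(q^d-1)$ vanishes, so the identity holds automatically for every $m$, and hence $\alpha\circ P_d = P_d\circ\alpha$; this direction is immediate. For the converse, suppose $\alpha\circ P_d = P_d\circ\alpha$, so that $f(m+d)\,q^m(q^d-1)=0$ for all $m$. Since $q\in\mathbb{K}\setminus\{0\}$, the scalar $q^m$ is never zero, so the identity forces $f(m+d)(q^d-1)=0$ for every $m\in\mathbb{Z}$. The point where I must use a genuine hypothesis is here: because $P_d$ is assumed to be a \emph{non-zero} averaging operator, there exists at least one index $m_0\in\mathbb{Z}$ with $f(m_0+d)\neq 0$. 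Evaluating at $m=m_0$ then yields $q^d - 1 = 0$, i.e. $q^d = 1$.

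The main obstacle is subtle rather than computational: the commutativity relation $f(m+d)q^m(q^d-1)=0$ is satisfied for every $m$ at which $f$ happens to vanish, so one cannot conclude $q^d=1$ from the relation alone. The argument genuinely needs the nonvanishing of $P_d$ to produce a single index where $f\neq 0$, and it is precisely at that index that the nonzero scalar $q^m$ can be cancelled to isolate $q^d-1=0$. Thus the hypothesis that $P_d$ is non-zero is not decorative but essential, and I would flag in the write-up that without it the statement fails (the zero operator commutes with $\alpha$ for any $q$ and any $d$).
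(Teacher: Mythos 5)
Your proposal is correct and follows essentially the same route as the paper: evaluate $\alpha\circ P_d$ and $P_d\circ\alpha$ on the basis $\{L_m\}$, reduce commutativity to the scalar identity $f(m+d)(q^d-1)=0$ for all $m$ (the paper silently cancels the nonzero factor $q^m$ that you make explicit), and invoke $P_d\neq 0$ to produce an index with $f(m_0+d)\neq 0$, forcing $q^d=1$. Your added remark that the non-vanishing hypothesis is essential matches the paper's closing observation.
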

\begin{proof}
For all $m\in\mathbb{Z}$,
\begin{align*}
& \alpha\circ P_d(L_m)=\alpha(f(m+d)L_{m+d})
=f(m+d)\alpha(L_{m+d})
=f(m+d)(1+q^{m+d})L_{m+d},\\
& P_d\circ \alpha(L_m)=P_d( \alpha(L_m))=P_d((1+q^{m})L_m)
=(1+q^{m})P_d(L_m)
=(1+q^{m})f(m+d)L_{m+d},\\
& \alpha\circ P_d(L_m)=P_d\circ \alpha(L_m)\
\Leftrightarrow
\forall\ m\in\mathbb{Z}:\ f(m+d)(q^{d}-1)=0,
\end{align*}
is equivalent to $q^{d} = 1$ when $P_d\neq 0$, as in this case
$f(m+d)\neq 0$ for some $m\in \mathbb{Z}$.
\end{proof}
\subsection*{Case 1: $q=1$}

When $q=1$, equation \eqref{firsteqwittgeneral} becomes for all $m,n\in \mathbb{Z}$,
\begin{equation}
 f(m)\big(f(n)  (m-n)-f(m+n)(m-n+d)\big)=0. \label{firsteqwitt1}
\end{equation}
Plugging $n = 0$ in \eqref{firsteqwitt1}, we have
\begin{equation}
 f(m)\big(mf(0) -(m+d)f(m)\big)=0. \label{firsteqwitt2}
\end{equation}
\subsubsection*{Subcase 1: $d=0$}
\begin{prop}\label{propwitt1}
With the notations as above, the averaging operator $P_0$
with degree $d=0$ is given by
\begin{equation*}
f(m)=\mu f(0)+\nu\delta_{m,0},\;\; \forall\ m\in\mathbb{Z},~for~ some~(\nu,\mu)\in \mathbb{K}\times\{0,1\},
\end{equation*}
where for any $x,y\in\mathbb{K},$ $\delta_{x,y}=\left\{
  \begin{array}{lllll}
1 \text{ if } x=y\\
0 \text{ if } x\neq y.
  \end{array}\right.$
\end{prop}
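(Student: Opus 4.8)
The plan is to read $f$ directly off the scalar relations to which the averaging axiom has already been reduced. Putting $d=0$ in \eqref{firsteqwitt1} gives the master relation
\begin{equation*}
f(m)\,(m-n)\,\big(f(n)-f(m+n)\big)=0,\qquad \forall\, m,n\in\mathbb{Z},
\end{equation*}
and its specialization \eqref{firsteqwitt2} at $n=0$ reads $m\,f(m)\,\big(f(0)-f(m)\big)=0$. First I would treat $f(0)=0$ separately: since $\mathbb{K}$ has characteristic zero, the scalar $m$ is nonzero for every integer $m\neq 0$, so this specialization forces $f(m)^2=0$ and hence $f(m)=0$; thus $f\equiv 0$, which is the case $\mu=0,\ \nu=0$. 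Assuming now $f(0)=c\neq 0$, the same specialization yields the pointwise dichotomy $f(m)\in\{0,c\}$ for all $m$, so $f=c\,\mathbf{1}_{S}$, where $S=\{m\in\mathbb{Z}:f(m)\neq 0\}$ is the support and $0\in S$.

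The substance is to determine $S$ from the master relation. With $f=c\,\mathbf{1}_{S}$ and $c\neq 0$ the relation becomes the combinatorial condition that for every $m\in S$ and every $n\neq m$ one has $n\in S\iff m+n\in S$. Applying this with $n=-m$ (allowed because $m\neq 0$ forces $-m\neq m$) gives $-m\in S$, so $S$ is symmetric. Fixing $m\in S\setminus\{0\}$ and following the condition along the progression $(km)_{k\in\mathbb{Z}}$ — reaching all indices $k\le 0$ with the shift by $+m$ and all indices $k\ge 1$ with the shift by $-m$ (available since $-m\in S$), each range avoiding the single index where the equivalence is not asserted — I would obtain $km\in S$ for all $k$, that is $m\mathbb{Z}\subseteq S$. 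Combined with the condition applied to two distinct nonzero elements, this shows $S$ is closed under addition and inversion, hence a subgroup of $\mathbb{Z}$.

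By the classification of subgroups of $\mathbb{Z}$, $S=d_0\mathbb{Z}$ for a unique $d_0\ge 0$. Here $S=\{0\}$ returns $f=c\,\delta_{\cdot,0}$ (the case $\mu=0,\ \nu=c$) and $S=\mathbb{Z}$ returns the constant $f\equiv c$ (the case $\mu=1,\ \nu=0$); the commutativity \eqref{Palphacom} is automatic throughout because $q^{0}=1$ by Lemma \ref{commuteWitt}. The step I expect to be the real obstacle is exactly this concluding classification. The subgroup reduction is sharp, and the intermediate subgroups $d_0\mathbb{Z}$ with $d_0\ge 2$ satisfy the master relation just as well: for $f=c\,\mathbf{1}_{2\mathbb{Z}}$, every $m$ with $f(m)\neq 0$ is even, so $n$ and $m+n$ share parity and $f(n)=f(m+n)$ identically. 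Consequently the two families recorded in the statement do not exhaust the solutions of \eqref{firsteqwitt1}, and to land on exactly ``constant or supported at $0$'' one must either impose an extra condition ruling out the periodic supports or else broaden the conclusion to all $f=c\,\mathbf{1}_{d_0\mathbb{Z}}$, $d_0\ge 0$. I would therefore present the subgroup reduction as the core of the argument and single out this gap as the point demanding the most care.
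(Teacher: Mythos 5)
Your argument is correct, and it exposes a real defect in the paper rather than in your own reasoning. The paper's proof of Proposition \ref{propwitt1} is a single line: from the $n=0$ specialization \eqref{firsteqwitt2}, namely $m\,f(m)\,(f(0)-f(m))=0$, it concludes at once that $f(m)=\mu f(0)+\nu\delta_{m,0}$ for a fixed pair $(\nu,\mu)\in\mathbb{K}\times\{0,1\}$. That inference is a non sequitur: \eqref{firsteqwitt2} only gives the pointwise dichotomy $f(m)\in\{0,f(0)\}$ for each $m\neq 0$ separately, and promoting it to a single uniform choice of $\mu$ would require returning to the full relation \eqref{firsteqwitt1}, which the paper never does. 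Your route does exactly this: writing $f=c\,\mathbf{1}_S$ with $c=f(0)\neq 0$, the relation \eqref{firsteqwitt1} with $d=0$ says that $n\in S\Leftrightarrow m+n\in S$ for every $m\in S$ and every $n\neq m$, and your symmetry and progression arguments correctly show that $S$ is a subgroup of $\mathbb{Z}$, hence $S=d_0\mathbb{Z}$ for some $d_0\geq 0$. (Your handling of the case $f(0)=0$, using that $\mathbb{K}$ has characteristic zero so that $m\neq 0$ is invertible in $\mathbb{K}$, is also correct.)

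The ``gap'' you flag at the end is genuine, and it lies in the statement of the Proposition, not in your proof. Your counterexample is valid: for $f=c\,\mathbf{1}_{2\mathbb{Z}}$ with $c\neq 0$, any $m$ with $f(m)\neq 0$ is even, so $n$ and $m+n$ always have equal parity and $f(n)=f(m+n)$; thus \eqref{firsteqwitt1} holds, and \eqref{Palphacom} is automatic since $d=0$ (indeed $\alpha=2\,\mathrm{id}$ when $q=1$), so $P_0(L_m)=c\,\mathbf{1}_{2\mathbb{Z}}(m)\,L_m$ is an honest degree-zero averaging operator. Yet it admits no expression $\mu f(0)+\nu\delta_{m,0}$ with constant $\mu$: $f(1)=0$ forces $\mu=0$ while $f(2)=c\neq 0$ forces $\mu=1$. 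The complete classification in this case is $f\equiv 0$ or $f=c\,\mathbf{1}_{d_0\mathbb{Z}}$ with $c\in\mathbb{K}^{\ast}$ and $d_0\geq 0$; Proposition \ref{propwitt1} records only $d_0=0$ (support $\{0\}$) and $d_0=1$ ($f$ constant), and the omission propagates to Theorem \ref{Thm2.14} and to the induced Hom-Leibniz brackets derived from it. Your proposed remedy --- either broaden the conclusion to all $d_0\geq 0$ or add a hypothesis ruling out the periodic supports --- is exactly what is needed.
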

\begin{proof}
When $d=0$, \eqref{firsteqwitt2} becomes $m f(m)(f(0)-f(m))=0$.
Hence
$f(m)=\mu f(0)+\nu\delta_{m,0}$ for some $\nu\in \mathbb{K}$ and $\mu\in\{0,1\}$.
\end{proof}
\subsubsection*{Subcase 2: $d\in\mathbb{Z}^\ast$}
\begin{prop}\label{propwitt2}
With the notations as above, when the degree $d\in\mathbb{Z}^\ast$ and $f(0)=0$, we have
\begin{equation*}
f(m)=\nu\delta_{m+d,0},\;\; \forall\ m\in\mathbb{Z},~where~\nu\in \mathbb{K},
\end{equation*}
\end{prop}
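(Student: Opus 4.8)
The plan is to analyze the defining functional equation under the specific hypotheses of this subcase. We are in Case 1 ($q=1$) with degree $d\in\mathbb{Z}^\ast$ and the additional assumption $f(0)=0$, so the governing equation is \eqref{firsteqwitt1}, namely $f(m)\big(f(n)(m-n)-f(m+n)(m-n+d)\big)=0$ for all $m,n\in\mathbb{Z}$. First I would extract information from the specialized equation \eqref{firsteqwitt2}, which with $f(0)=0$ reduces to $f(m)\big(-(m+d)f(m)\big)=0$, that is $(m+d)f(m)^2=0$ for all $m\in\mathbb{Z}$. For every $m\neq -d$ the factor $m+d$ is a nonzero integer, hence invertible in the characteristic-zero field $\mathbb{K}$, forcing $f(m)=0$. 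This already pins down $f$ to be supported only at $m=-d$, i.e. $f(m)=\nu\,\delta_{m+d,0}$ where $\nu:=f(-d)\in\mathbb{K}$ is a priori arbitrary.

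The remaining task is to verify that this candidate genuinely satisfies the full equation \eqref{firsteqwitt1} for all $m,n$, so that $\nu$ is indeed unconstrained. I would substitute $f(m)=\nu\,\delta_{m+d,0}$ back in. The outer factor $f(m)$ vanishes unless $m=-d$, so the only nontrivial instances occur at $m=-d$; there \eqref{firsteqwitt1} becomes $\nu\big(f(n)(-d-n)-f(-d+n)(-d-n+d)\big)=\nu\big(f(n)(-d-n)+n\,f(n-d)\big)$. Now $f(n)\neq 0$ needs $n=-d$ and $f(n-d)\neq 0$ needs $n=d$; since $d\neq 0$ these two cases are disjoint. For $n=-d$ the bracket is $\nu\cdot\nu\cdot 0=0$ (the coefficient $-d-n=-d+d=0$), and for $n=d$ the bracket is $\nu\cdot n\cdot f(0)=\nu\cdot d\cdot 0=0$ by the standing hypothesis $f(0)=0$; all other $n$ give zero trivially. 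Hence the equation holds identically, confirming $\nu$ is free.

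I anticipate no serious obstacle here; the only subtlety worth flagging is ensuring the commutativity condition $\alpha\circ P_d=P_d\circ\alpha$ is respected. By Lemma \ref{commuteWitt} this requires $q^d=1$, which is automatic since $q=1$ in this case, so the candidate is a bona fide averaging operator. The mildly delicate point in the verification is keeping the two support conditions $n=-d$ and $n=d$ separate and using $d\neq 0$ to guarantee they do not coincide, together with invoking $f(0)=0$ precisely at $n=d$; beyond that the argument is a direct substitution. I would therefore present the proof as: derive $(m+d)f(m)^2=0$ from \eqref{firsteqwitt2}, conclude $f(m)=0$ for $m\neq -d$, and then check that $f(m)=\nu\,\delta_{m+d,0}$ satisfies \eqref{firsteqwitt1} for every $m,n\in\mathbb{Z}$.
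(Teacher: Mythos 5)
Your core argument coincides exactly with the paper's proof: setting $n=0$ in \eqref{firsteqwitt1} yields \eqref{firsteqwitt2}, which under $f(0)=0$ reduces to $(m+d)f(m)^{2}=0$; since $m+d$ is a nonzero integer (hence invertible in a field of characteristic zero) for every $m\neq -d$, this forces $f(m)=0$ off $m=-d$, i.e. $f(m)=\nu\delta_{m+d,0}$ with $\nu=f(-d)$. The paper stops there. You additionally verify sufficiency, namely that every such $f$ satisfies \eqref{firsteqwitt1} for all $m,n$, so that $\nu$ is genuinely unconstrained; this is a worthwhile addition, since the proposition as stated is only a necessity claim while the classification in Theorem \ref{Thm2.14} implicitly relies on the converse, which the paper never checks. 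However, your verification contains a slip in the support analysis: since $f(k)=\nu\delta_{k+d,0}$ is supported at $k=-d$, the factor $f(n-d)$ is nonzero precisely when $n-d=-d$, i.e. $n=0$, \emph{not} $n=d$. The verification still goes through once this is corrected: at $m=-d$ the bracket is $\nu\bigl(f(n)(-d-n)+n\,f(n-d)\bigr)$; the first term can be nonzero only at $n=-d$, where its coefficient $-d-n$ vanishes (and there $f(n-d)=f(-2d)=0$ because $d\neq 0$), while the second term can be nonzero only at $n=0$, where it is killed by the factor $n$. So each instance of \eqref{firsteqwitt1} holds, but for these reasons rather than via an appeal to $f(0)=0$ at $n=d$ (at $n=d$ both terms vanish automatically, since $f(d)=0$ and $f(0)=0$). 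Your closing remark that the commutativity requirement of Lemma \ref{commuteWitt} is automatic here because $q=1$ gives $q^{d}=1$ is correct.
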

\begin{proof}
When $d\neq 0$ and $f(0)=0$, then by equation \eqref{firsteqwitt2}, we have for all $m\in\mathbb{Z}$, $(m+d)f^{2}(m)=0$. Thus the function $f$ satisfies for any $m\in\mathbb{Z}$,
$f(m)=\nu\delta_{m+d,0}$ for some $\nu\in \mathbb{K}$.
\end{proof}
\begin{prop}\label{propwitt3}
With the notations as above, when the degree $d\in\mathbb{Z}^\ast$ and $f(0)\neq0$, we have
\begin{equation*}
f(m)=\mu\frac{m}{m+d}f(0)\delta_{m,\mathbb{Z}\setminus\{-d\}},\;\; \forall\ m\in\mathbb{Z},~where~\mu\in\{0,1\},
\end{equation*}
where  $\delta_{m,\mathbb{Z}\setminus\{-d\}}=\left\{
  \begin{array}{lllll}
1 \text{ if } m\neq -d\\
0 \text{ if } m=-d.
  \end{array}\right.$
\end{prop}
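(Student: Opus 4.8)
The plan is to extract the claimed form of $f$ from the two functional equations already derived, working throughout under the standing hypotheses $q=1$, $d\in\mathbb{Z}^\ast$ and $f(0)\neq0$. The natural entry point is the specialization \eqref{firsteqwitt2}, namely $f(m)\big(mf(0)-(m+d)f(m)\big)=0$ for all $m\in\mathbb{Z}$, since it already separates the analysis into a statement about the exceptional index $m=-d$ and a pointwise dichotomy at all other indices.

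First I would treat $m=-d$ separately. There the term $(m+d)f(m)$ vanishes and \eqref{firsteqwitt2} collapses to $-d\,f(0)\,f(-d)=0$; since $d\neq0$ and $f(0)\neq0$, this forces $f(-d)=0$, which is exactly what the factor $\delta_{m,\mathbb{Z}\setminus\{-d\}}$ records. For every other index $m\neq-d$ the coefficient $(m+d)$ is invertible, so \eqref{firsteqwitt2} factors into the alternative $f(m)=0$ or $f(m)=\tfrac{m}{m+d}f(0)$. At this stage each value $f(m)$ independently lies in a two-element set, and the content of the proposition is that these choices are governed by a single parameter $\mu\in\{0,1\}$: either $f$ vanishes off $-d$, or it equals $\tfrac{m}{m+d}f(0)$ on all of $\mathbb{Z}\setminus\{-d\}$.

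To pass from this pointwise dichotomy to the global statement one must invoke the full bilinear identity \eqref{firsteqwitt1}, since \eqref{firsteqwitt2} alone does not couple distinct indices. Concretely, I would fix an index $m_0\neq-d$ with $f(m_0)\neq0$, cancel the nonzero factor $f(m_0)$ from \eqref{firsteqwitt1}, and obtain the relation $f(n)(m_0-n)=f(m_0+n)(m_0-n+d)$ linking the value at $n$ to the value at the shifted index $m_0+n$; this is the relation I would use to tie the local choices together, with the complementary possibility that no such $m_0$ exists corresponding to $\mu=0$. Assembling the two alternatives would then produce the single expression $f(m)=\mu\,\tfrac{m}{m+d}f(0)\,\delta_{m,\mathbb{Z}\setminus\{-d\}}$.

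I expect the real work to lie precisely in reconciling these shift relations with the \emph{full} equation \eqref{firsteqwitt1} rather than only with its $n=0$ specialization, and in particular in handling the degenerate index $m=0$, where both branches of the dichotomy return $f(0)=0$. The interaction between the shift relation of the previous paragraph, the excluded index $m=-d$, and the standing hypothesis $f(0)\neq0$ is exactly what determines which values of $\mu$ survive, so I would devote the bulk of the argument to a careful bookkeeping of these degenerate indices; the factorizations in the earlier steps are routine by comparison. Once the admissible global choices are pinned down, collecting them into the stated formula is immediate.
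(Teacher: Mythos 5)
Your executed steps coincide exactly with the paper's entire proof of this proposition: the paper also specializes \eqref{firsteqwitt2} at $m=-d$ to get $f(-d)=0$, reads off the pointwise dichotomy $f(m)\in\bigl\{0,\tfrac{m}{m+d}f(0)\bigr\}$ for $m\neq -d$, and then simply \emph{asserts} that a single $\mu\in\{0,1\}$ works for all $m$. So your diagnosis that the pointwise-to-global passage is an unproved leap is accurate — the paper does nothing to close it. But your proposal does not close it either: the ``careful bookkeeping of degenerate indices'' to which you defer ``the bulk of the argument'' is never carried out, so as a proof the proposal is incomplete.

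More importantly, the deferred step is not bookkeeping at all — it is a one-line contradiction that you walk right past. You correctly observe that at $m=0$ both branches of the dichotomy return $f(0)=0$; but the standing hypothesis is $f(0)\neq 0$. Indeed, \eqref{firsteqwitt2} at $m=0$ reads
\begin{equation*}
f(0)\bigl(0\cdot f(0)-(0+d)f(0)\bigr)=-d\,f(0)^{2}=0,
\end{equation*}
which forces $f(0)=0$, as $d\neq 0$ and $\mathbb{K}$ has characteristic zero. Hence no averaging operator of nonzero degree with $q=1$ satisfies $f(0)\neq 0$: the hypothesis class of the proposition is empty, the statement is only vacuously true, and the displayed formula (which itself returns $f(0)=0$ at $m=0$) can never be consistent with the hypothesis. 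Your own machinery detects this instantly: the index $m_{0}=0$ is admissible for your shift relation precisely because $f(0)\neq 0$ and $0\neq -d$, and it gives $-n\,f(n)=(d-n)f(n)$, i.e.\ $d\,f(n)=0$ for all $n$, so $f\equiv 0$ — contradicting $f(0)\neq 0$ again. So the reconciliation of branches you planned has nothing to operate on; the honest proof is the observation of vacuity. Note also that the leap is not harmless for the paper: the $\mu=1$ candidate is not in fact an averaging operator (for $d=1$, $m=1$, $n=2$, equation \eqref{firsteqwitt1} gives $-f(1)f(2)=-\tfrac{1}{3}f(0)^{2}\neq 0$), so the operator $P_d^{2}$ recorded in Theorem \ref{Thm2.14} survives only as the zero operator.
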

\begin{proof}
When $d\neq 0$ and $f(0)\neq 0$, it follows from equation \eqref{firsteqwitt2} for $m=-d$ that $f(-d)=0$. Moreover for $m\neq -d$ in equation \eqref{firsteqwitt2}, we have
$
f(m)=\mu\frac{m}{m+d}f(0),\;\; \text{ for some }\mu\in\{0,1\}.
$
Therefore for all $m\in\mathbb{Z}$,
$
f(m)=\mu\frac{m}{m+d}f(0)\delta_{m,\mathbb{Z}\setminus\{-d\}},\;\;  where ~\mu\in\{0,1\}.
$
\end{proof}
\subsection*{Case 2: $q\neq 1$ and $q^d=1$}
\begin{prop}\label{propWitt}
With the notations as above, Suppose that the degree $d$ satisfying $q\neq 1$ and $q^{d}=1$, then we have
\begin{equation*}
f(m)=\mu f(0)+\nu\delta_{q^{m},1},\;\; \forall\ m\in\mathbb{Z},~where~(\nu,\mu)\in \mathbb{K}\times\{0,1\}.
\end{equation*}
\end{prop}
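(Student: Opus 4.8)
The plan is to reduce the defining relation \eqref{firsteqwittgeneral} to a single clean functional equation by exploiting the hypothesis $q^{d}=1$, and then to read off $f$ from it. If $P_{d}=0$ the claim holds trivially with $\mu=\nu=0$, so assume $P_{d}\neq 0$; then Lemma \ref{commuteWitt} tells us that the commutativity requirement \eqref{Palphacom} is automatic precisely because $q^{d}=1$, and the whole content is the averaging relation encoded in \eqref{firsteqwittgeneral}. The key observation is that $q^{d}=1$ forces $q^{\,n-d}=q^{n}$, whence $\{n-d\}=\{n\}$ for every $n\in\mathbb Z$. Feeding this into \eqref{firsteqwittgeneral} collapses the two bracket coefficients and leaves the \emph{reduced equation}
\[
f(m)\,(\{m\}-\{n\})\,\bigl(f(n)-f(m+n)\bigr)=0,\qquad \forall\, m,n\in\mathbb Z,
\]
which will drive all of the analysis.

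Next I would extract a pointwise dichotomy by specialising $n=0$. Using $\{0\}=0$, the reduced equation becomes $f(m)\,\{m\}\,(f(0)-f(m))=0$ for all $m$. Recalling from Example \ref{ex1} that $\{m\}=0$ if and only if $q^{m}=1$, this says that for every $m$ outside $Z:=\{m\in\mathbb Z : q^{m}=1\}$ one has either $f(m)=0$ or $f(m)=f(0)$, while on $Z$ (which contains $0$) the relation for $n=0$ imposes no constraint. Thus off $Z$ the function $f$ already takes only the two values $0$ and $f(0)$, which is exactly the local shape that, once made uniform, produces the factor $\mu f(0)$.

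The heart of the proof — and the step I expect to be the main obstacle — is to promote this pointwise dichotomy to the global form $f(m)=\mu f(0)$ with a \emph{single} $\mu\in\{0,1\}$ valid for all $m\notin Z$, and simultaneously to control the values of $f$ on $Z$. For this I would run the reduced equation with both arguments free, using the ordered pairs $(m,n)$ and $(n,m)$ together: whenever $f(m)\neq 0$, $f(n)\neq 0$ and $q^{m}\neq q^{n}$, the two instances give $f(n)=f(m+n)$ and $f(m)=f(m+n)$, hence $f(m)=f(n)$; so $f$ is constant across its support as soon as the support meets distinct $q$-classes. Combining this with the $n=0$ dichotomy should pin the common value to $f(0)$ and force the off-$Z$ behaviour to be uniform, yielding $\mu$. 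I anticipate the genuine difficulty to lie exactly here: one must rule out ``intermediate'' solutions in which $f$ equals $f(0)$ on part of $Z^{c}$ and vanishes elsewhere, and one must pin down the behaviour on $Z$ by itself. This is where the arithmetic of the $q$-numbers enters — in particular whether $q$ is a root of unity, so that $Z$ is strictly larger than $\{0\}$ — and it must be handled with care, since it is the only place the passage from the local two-valued constraint to the rigid two-parameter normal form can fail.

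Finally, assembling the two regimes yields the stated normal form: choosing $\mu\in\{0,1\}$ according to whether $f$ equals $f(0)$ or $0$ off $Z$, and recording the (constant) deviation on $Z$ as $\nu\,\delta_{q^{m},1}$, one obtains $f(m)=\mu f(0)+\nu\,\delta_{q^{m},1}$ for all $m\in\mathbb Z$, with the trivial operator recovered by $\mu=\nu=0$. A direct substitution then checks conversely that every such $f$ solves the reduced equation, closing the classification.
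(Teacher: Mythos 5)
Your first two steps---collapsing \eqref{firsteqwittgeneral} under $q^{d}=1$ to the reduced equation $f(m)\,(\{m\}-\{n\})\,\bigl(f(n)-f(m+n)\bigr)=0$ and specialising to $n=0$ to obtain $\{m\}\,f(m)\,(f(0)-f(m))=0$---coincide exactly with the paper's proof, which consists of precisely these two computations followed by the word ``Hence''. But the step that you yourself single out as the heart of the matter, namely passing from the pointwise dichotomy off $Z=\{m\in\mathbb{Z}:q^{m}=1\}$ to a single uniform $\mu$ together with a controlled behaviour of $f$ on $Z$, is only sketched in your proposal (``should pin \dots'', ``I anticipate the genuine difficulty to lie exactly here''), so as written this is a genuine gap---and it sits at exactly the point where the paper's own proof also jumps.

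Moreover, this gap cannot be filled, because the stated classification is false. The reduced equation imposes no constraint at all between points in the same $q$-class (if $q^{m}=q^{n}$ the factor $\{m\}-\{n\}$ vanishes), and consequently the scaled indicator of any subgroup $k\mathbb{Z}$ solves it: if $f(m)\neq 0$ implies $k\mid m$, then $f(n)=f(m+n)$ holds automatically. Concretely, take $q=-1$ and $d=0$ (allowed here: $q\neq 1$, $q^{d}=1$, and $Z=2\mathbb{Z}$), fix $c\in\mathbb{K}^{\ast}$, and set $f(m)=c$ if $3\mid m$ and $f(m)=0$ otherwise. Then $P_{0}(L_{m})=f(m)L_{m}$ commutes with $\alpha$ and satisfies \eqref{averaging operator 2}, hence is a nonzero homogeneous averaging operator of degree $0$; yet $f$ is not of the form $\mu f(0)+\nu\delta_{q^{m},1}$, since $f(1)=0$ forces $\mu=0$, then $f(0)=c$ forces $\nu=c$, contradicting $f(2)=0$. (The same $f$ gives a counterexample for any $d$ with $q^{d}=1$, e.g.\ $d=2$.) Note that this $f$ passes every test your partial arguments supply: off $Z$ it takes only the values $0$ and $f(0)$, and it is constant on its support, whose points $3$ and $6$ do lie in distinct $q$-classes. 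The equation simply never compares $f(1)$ with $f(3)$, so the pair trick with $(m,n)$ and $(n,m)$ cannot exclude such ``intermediate'' solutions. They are honest averaging operators missing from Proposition \ref{propWitt} (and hence from Theorem \ref{Thm2.14}); a correct treatment must enlarge the list of solutions rather than find a sharper argument for the stated normal form.
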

\begin{proof}
When $q\neq 1$ and $q^{d}=1$, \eqref{firsteqwittgeneral} becomes for all $m,n\in \mathbb{Z},$
\begin{equation}
 f(m)\big(f(n)(\{m\}-\{n\})-f(m+n)(\{m\}-\{n\})\big)=0\label{first eq witt}.
\end{equation}
Taking $n=0$ in \eqref{first eq witt} yields
$\{m\}f(m)(f(0)-f(m))=0.$
Hence $f(m)=\mu f(0)+\nu\delta_{q^{m},1}$ for some $\nu\in \mathbb{K}$ and $\mu\in\{0,1\}$.
\end{proof}

\begin{thm}\label{Thm2.14}
The homogeneous averaging operator $P_d$ with degree $d$ on the $q$-deformed Witt Hom-algebra $(\mathcal{V}^{q},[\cdot,\cdot],\alpha)$ must be one of the following operators, given for all $m\in \mathbb{Z}$, by
\begin{align*}&P_d^{1}(L_m) =\beta+\nu\delta_{m+2d,0}L_{m+d},\quad\quad \text{ for } q=1 \text{ and }d\in\mathbb{Z},\\
&P_d^{2}(L_m) =
\mu\frac{m+d}{m+2d}\gamma \delta_{m+d,\mathbb{Z}\setminus\{-d\}}L_{m+d},\quad\quad \text{ for } q=1 \text{ and }d\in\mathbb{Z},\\
&P_d^{3}(L_m) =
\beta+\nu\delta_{q^{m},1}L_{m+d},\quad\quad \text{ for } q\neq 1 \text{ and } q^d=1, \end{align*}
where $\beta,\nu\in\mathbb{K}$, $\gamma\in\mathbb{K}^{\ast}$ and $\mu\in\{0,1\}$.
\end{thm}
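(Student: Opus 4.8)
The plan is to assemble Theorem~\ref{Thm2.14} directly from the case analysis already carried out in Propositions~\ref{propwitt1}--\ref{propWitt}, since Lemma~\ref{commuteWitt} forces every admissible degree $d$ to satisfy $q^d=1$. First I would observe that any homogeneous averaging operator $P_d$ on $\mathcal{V}^{q}$ has the form \eqref{averaging Witt}, so classifying such operators amounts to classifying the coefficient function $f$ subject to the master equation \eqref{firsteqwittgeneral}. By Lemma~\ref{commuteWitt} the commutativity axiom \eqref{Palphacom} is equivalent to $q^d=1$ whenever $P_d\neq 0$; hence the classification splits into exactly the regimes treated above, namely $q=1$ (where $q^d=1$ holds automatically for all $d$) and $q\neq 1$ with $q^d=1$.

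Next I would handle the two regimes separately and then translate the formulas for $f$ into formulas for $P_d$ via \eqref{averaging Witt}. For $q=1$, Subcase~1 ($d=0$) gives $f(m)=\mu f(0)+\nu\delta_{m,0}$ by Proposition~\ref{propwitt1}, while Subcase~2 ($d\in\mathbb{Z}^\ast$) splits according to whether $f(0)=0$ or $f(0)\neq 0$: Proposition~\ref{propwitt2} yields $f(m)=\nu\delta_{m+d,0}$ and Proposition~\ref{propwitt3} yields $f(m)=\mu\frac{m}{m+d}f(0)\,\delta_{m,\mathbb{Z}\setminus\{-d\}}$. Substituting $f(m+d)$ for the coefficient of $L_{m+d}$ in each case produces the operators $P_d^{1}$ and $P_d^{2}$ after relabelling the free scalars (writing $\beta=\mu f(0)$ and $\gamma=f(0)$). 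For $q\neq 1$ with $q^d=1$, Proposition~\ref{propWitt} gives $f(m)=\mu f(0)+\nu\delta_{q^{m},1}$, which upon substitution yields $P_d^{3}$.

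The one genuinely nontrivial bookkeeping step is the index shift when passing from the function $f$ to the operator: since $P_d(L_m)=f(m+d)L_{m+d}$, the Kronecker constraints on the argument of $f$ must be re-expressed in terms of $m$. Thus $\delta_{m,0}$ evaluated at argument $m+d$ becomes $\delta_{m+d,0}$, but in $P_d^{1}$ the stated indicator is $\delta_{m+2d,0}$; this discrepancy, together with the analogous shifts $\delta_{m+d,0}\rightsquigarrow\delta_{m+2d,0}$ and $\delta_{m,\mathbb{Z}\setminus\{-d\}}\rightsquigarrow\delta_{m+d,\mathbb{Z}\setminus\{-d\}}$ in $P_d^{2}$, is exactly what must be verified carefully, and I expect this index-tracking to be the main place where sign or offset errors can creep in. Once the substitutions are performed and the free parameters relabelled, the three families $P_d^{1},P_d^{2},P_d^{3}$ exhaust all possibilities, completing the classification.
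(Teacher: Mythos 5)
Your proposal is correct and is essentially the paper's own proof: the paper's argument is exactly the assembly you describe, namely ``directly by combining Lemma~\ref{commuteWitt} and Propositions~\ref{propwitt1}, \ref{propwitt2}, \ref{propwitt3} and \ref{propWitt}'' via the substitution $m\mapsto m+d$ in \eqref{averaging Witt}. The ``discrepancy'' you flag in your last paragraph is not a real one: Proposition~\ref{propwitt1} applies only when $d=0$, where $\delta_{m,0}=\delta_{m+d,0}=\delta_{m+2d,0}$, so no conflict arises there; the indicator $\delta_{m+2d,0}$ appearing in $P_d^{1}$ for $d\neq 0$ comes from Proposition~\ref{propwitt2}'s $\delta_{m+d,0}$ under the shift $m\mapsto m+d$ (with $\beta=0$ forced in that subcase, the stated family being a superset, which is consistent with the ``must be one of'' formulation). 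For $P_d^{3}$ note also that the shift produces $\delta_{q^{m+d},1}$, which equals $\delta_{q^{m},1}$ precisely because $q^{d}=1$, so the index-tracking closes without any residual offset.
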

\begin{proof}
Directly by ombining Lemma \ref{commuteWitt} and Propositions \ref{propwitt1}, \ref{propwitt2}, \ref{propwitt3} and \ref{propWitt}.
\end{proof}
Now, using the construction given in Proposition \ref{prop:Hom-Lieto Hom-Leibniz}, we have the following:
\begin{thm}
The homogeneous averaging operators obtained in Theorem \ref{Thm2.14}
for the $q$-deformed Witt Hom-algebra $(\mathcal{V}^{q},[\cdot,\cdot],\alpha)$, give rise to the following Hom-Leibniz algebras on the underlying
linear space $\mathcal{V}^q$,
\begin{align*}
&\{L_m,L_n\}^{1}=( \beta+\nu\delta_{m+d,0})(m-n)L_{m+n},\quad\quad \forall m,n\in\mathbb{Z}, \text{ where } q=1 \text{ and }d\in\mathbb{Z},\\
&\{L_m,L_n\}^{2}=(\mu\frac{m}{m+d}\gamma\delta_{m,\mathbb{Z}\setminus\{-d\}})(m-n)L_{m+n},\quad\quad \forall m,n\in\mathbb{Z}, \text{ where } q=1 \text{ and }d\in\mathbb{Z},\\
   & \{L_m,L_n\}^{3}=( \beta+\nu\delta_{q^{m},1})(\{m\}-\{n\})L_{m+n},\quad\quad\forall m,n\in\mathbb{Z}, \text{ where } q\neq 1 \text{ and } q^d=1,
\end{align*}
where $\beta,\nu\in\mathbb{K}$, $\gamma\in\mathbb{K}^{\ast}$ and $\mu\in\{0,1\}$.
\end{thm}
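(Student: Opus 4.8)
The plan is to read off each induced structure directly from Proposition \ref{prop:Hom-Lieto Hom-Leibniz}, which already carries the conceptual load: for any averaging operator $P$ on a Hom-Lie algebra, the bilinear map $\{x,y\} := [P(x),y]$ makes the triple $(\mathcal{V}^q, \{\cdot,\cdot\}, \alpha)$ a Hom-Leibniz algebra. Since Theorem \ref{Thm2.14} exhausts the homogeneous averaging operators on $\mathcal{V}^q$ by $P_d^1, P_d^2, P_d^3$, the Hom-Leibniz property of each of the three resulting triples is immediate and needs no separate verification of the Hom-Leibniz identity \eqref{Leibnizidentity}. Hence the whole content of the statement is to evaluate $\{L_m,L_n\} = [P_d^i(L_m),L_n]$ explicitly on the homogeneous basis $\{L_m\}_{m\in\mathbb{Z}}$.

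First I would fix $i\in\{1,2,3\}$ and insert the closed form of $P_d^i$ from Theorem \ref{Thm2.14}. Because each $P_d^i(L_m)$ is a scalar multiple of the single basis vector $L_{m+d}$, bilinearity lets me factor the scalar coefficient out of the bracket, so the computation collapses to one application of the Witt relation \eqref{witt bracket}, namely $[L_{m+d},L_n] = (\{m+d\}-\{n\})L_{m+n+d}$. It then remains to simplify the $q$-number factor regime by regime: for $q=1$ (the operators $P_d^1$ and $P_d^2$) I use $\{k\}=k$, so the factor becomes an integer linear in $m,n,d$; for the regime $q\neq 1$, $q^d=1$ (the operator $P_d^3$) I use the additivity identity $\{m+d\}=\{m\}+q^m\{d\}$ together with $\{d\}=0$ (which is equivalent to $q^d=1$) to obtain $\{m+d\}=\{m\}$, leaving precisely the factor $\{m\}-\{n\}$.

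The remaining work is purely bookkeeping with the indicator symbols and the $d$-shifts. I would track how the coefficients $\beta+\nu\delta_{m+2d,0}$, $\mu\frac{m+d}{m+2d}\gamma\,\delta_{m+d,\mathbb{Z}\setminus\{-d\}}$ and $\beta+\nu\delta_{q^{m+d},1}$ propagate through the bracket, using $q^d=1 \Rightarrow \delta_{q^{m+d},1}=\delta_{q^m,1}$ in the third case, and then read off the three displayed brackets. I expect no genuine obstacle: the Hom-Leibniz identity comes for free from Proposition \ref{prop:Hom-Lieto Hom-Leibniz}, and everything else is a controlled substitution. The single point to watch is the matching of index conventions — the placement of the $d$-shift in the subscript of the output $L$ and inside each $\delta$ — since an off-by-$d$ slip in the arithmetic is the easiest way to land on an incorrect closed form.
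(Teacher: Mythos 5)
Your plan is methodologically identical to the paper's proof: the Hom-Leibniz property of each triple is delegated to Proposition \ref{prop:Hom-Lieto Hom-Leibniz}, and all that remains is to evaluate $\{L_m,L_n\}^i=[P_d^i(L_m),L_n]$ on the homogeneous basis via \eqref{witt bracket}, simplifying the $q$-number factor with $\{m+d\}=\{m\}+q^m\{d\}=\{m\}$ when $q^d=1$ (the paper does the same reduction through $\{-d\}=0$). At that level nothing is missing.

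The difficulty is that the point you flag as a mere ``point to watch'' is fatal to the stated goal: the honest computation does not terminate at the displayed formulas when $d\neq0$, and this is exactly where the paper's own proof cheats. Substituting the operators of Theorem \ref{Thm2.14} into \eqref{witt bracket} gives
\begin{align*}
[P_d^{1}(L_m),L_n]&=(\beta+\nu\delta_{m+2d,0})(m+d-n)L_{m+n+d},\\
[P_d^{2}(L_m),L_n]&=\mu\tfrac{m+d}{m+2d}\gamma\,\delta_{m+d,\mathbb{Z}\setminus\{-d\}}(m+d-n)L_{m+n+d},\\
[P_d^{3}(L_m),L_n]&=(\beta+\nu\delta_{q^{m},1})(\{m\}-\{n\})L_{m+n+d},
\end{align*}
with image vector $L_{m+n+d}$ and, in the first two cases, coefficients shifted by $d$ relative to the theorem's display. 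The paper converts these into the displayed formulas by replacing $m$ by $m-d$ (in the third case, $n$ by $n-d$) inside a chain of equal signs; for $d\neq 0$ that is a relabelling of indices, not an identity of bilinear maps on $\mathcal{V}^q$. The gap is not cosmetic: taking $q=1$, $d\neq0$, $\beta=0$, $\nu\neq0$, the true bracket $[P_d^{1}(L_m),L_n]$ is supported on $m=-2d$, while the displayed bracket $\{L_m,L_n\}^{1}=\nu\delta_{m+d,0}(m-n)L_{m+n}$ is supported on $m=-d$; moreover the displayed map is not even a Hom-Leibniz structure, since for $x=L_{-d}$, $y=z=L_0$ the left-hand side of \eqref{Leibnizidentity} is $0$ while the right-hand side is $2\nu^{2}d^{2}L_{-d}\neq0$. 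So a rigorous execution of your plan proves the three $d$-shifted formulas above, and hence the theorem as printed only in the case $d=0$; it cannot ``read off the displayed brackets'', and if your bookkeeping does land on them, it means you have reproduced the paper's invalid substitution.
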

\begin{proof}
By Proposition \ref{prop:Hom-Lieto Hom-Leibniz}, the  Hom-Leibniz algebra induced by $P_d^{i},~i=1,2,3,$ is given for $m,n\in\mathbb{Z}$ by
\begin{align*}
&\{L_m,L_n\}^{1}=[P_d^{1}(L_m),L_n]
=[(\beta+\nu\delta_{m+2d,0}L_{m+d},L_n]=( \beta+\nu\delta_{m+2d,0})(m+d-n)L_{m+n+d}\\&=( \beta+\nu\delta_{m+d,0})(m-n)L_{m+n},\\[0.2cm]
&\{L_m,L_n\}^{2}=[P_d^{2}(L_m),L_n]=[\mu\frac{m+d}{m+2d}\gamma\delta_{m+d,\mathbb{Z}\setminus\{-d\}}L_{m+d},L_n]\\
&=(\mu\frac{m+d}{m+2d}\gamma\delta_{m+d,\mathbb{Z}\setminus\{-d\}})(m+d-n)L_{m+n+d}
=(\mu\frac{m}{m+d}\gamma\delta_{m,\mathbb{Z}\setminus\{-d\}})(m-n)L_{m+n},\\[0.2cm]
&  \{L_m,L_n\}^{3}=[P_d^{3}(L_m),L_n]=[(\beta+\nu\delta_{q^{m},1})L_{m+d},L_n]=(\beta+\nu\delta_{q^{m},1})(\{m\}-\{n\})L_{m+n+d}\\
  &=(\beta+\nu\delta_{q^{m},1})(\{m\}-\{n-d\})L_{m+n}
  \begin{array}[b]{l}
  \text{\scriptsize ($\{n-d\}=\{n\}+q^{n}\{-d\}$)}\\ =(\beta+\nu\delta_{q^{m},1})(\{m\}-(\{n\}+q^{n}\{-d\})L_{m+n}
  \end{array} \\
&
  ^\text{($\{-d\}=0$)}=(\beta+\nu\delta_{q^{m},1})(\{m\}-\{n\})L_{m+n}. \qedhere
\end{align*}
\end{proof}
\begin{prop}
The non-trivial Hom-Leibniz algebra $(\mathcal {V}^{q}, ~\{\cdot,\cdot\}^i, \alpha)$ induced by $P_d^{i}$ is multiplicative if and only if $i=3$ and $q=-1$.
\end{prop}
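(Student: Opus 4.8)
The plan is to invoke Corollary \ref{cor:Hom-algebramultiplicativity}, item \ref{item2:corollary}, since in each of the three cases the induced structure $(\mathcal{V}^q,\{\cdot,\cdot\}^i,\alpha)$ is a $\mathbb{Z}$-graded Hom-algebra with one-dimensional homogeneous components $\mathcal{V}^q_m=\mathbb{K}L_m$ and with $\alpha$ homogeneous of degree $k=0$, acting by $\alpha(L_j)=(1+q^j)L_j$, so $\alpha_{j,j}=1+q^j$. Writing $\{L_m,L_n\}^i=c^{m+n}_{m,n}L_{m+n}$, that corollary says the algebra is multiplicative if and only if $c^{m+n}_{m,n}\big((1+q^m)(1+q^n)-(1+q^{m+n})\big)=0$ for all $m,n\in\mathbb{Z}$. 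First I would record the elementary identity $(1+q^m)(1+q^n)-(1+q^{m+n})=q^m+q^n$, which reduces the criterion to the single requirement that $c^{m+n}_{m,n}(q^m+q^n)=0$ for all $m,n$.

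For $i=1$ and $i=2$ the defining hypothesis is $q=1$, so $q^m+q^n=2\neq 0$ since the base field has characteristic zero. Hence multiplicativity forces $c^{m+n}_{m,n}=0$ for all $m,n$, i.e. the induced bracket vanishes identically and the Hom-Leibniz algebra is trivial. Thus neither $\{\cdot,\cdot\}^1$ nor $\{\cdot,\cdot\}^2$ can be simultaneously non-trivial and multiplicative, which disposes of the cases $i=1,2$.

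For $i=3$ one has $q\neq 1$, $q^d=1$, and $c^{m+n}_{m,n}=(\beta+\nu\delta_{q^m,1})(\{m\}-\{n\})$. Using $\{m\}=\frac{1-q^m}{1-q}$ I would rewrite $(\{m\}-\{n\})(q^m+q^n)=\frac{(q^n-q^m)(q^n+q^m)}{1-q}=\frac{q^{2n}-q^{2m}}{1-q}$, so multiplicativity becomes $(\beta+\nu\delta_{q^m,1})(q^{2n}-q^{2m})=0$ for all $m,n$. For the forward direction, non-triviality of $\{\cdot,\cdot\}^3$ supplies indices $m_0,n_0$ with $(\beta+\nu\delta_{q^{m_0},1})(\{m_0\}-\{n_0\})\neq 0$, in particular $\beta+\nu\delta_{q^{m_0},1}\neq 0$; specializing the relation to $m=m_0$ gives $q^{2n}=q^{2m_0}$ for every $n$, whence $q^2=1$ (comparing $n$ and $n+1$, using $q\neq 0$) and therefore $q=-1$ since $q\neq 1$. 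For the converse, when $q=-1$ one has $q^{2n}-q^{2m}=0$ identically, so the multiplicativity relation holds for all $\beta,\nu$; and non-trivial choices exist (for instance $\beta\neq 0$ yields $\{L_m,L_n\}^3=\beta L_{m+n}\neq 0$ whenever $m$ is odd and $n$ is even, noting that $q^d=(-1)^d=1$ forces $d$ even). Combining both directions gives the claimed equivalence.

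The only delicate points I anticipate are bookkeeping ones: tracking the position-dependent coefficient $\delta_{q^m,1}$ carefully so that non-triviality is correctly converted into the inequality $\beta+\nu\delta_{q^{m_0},1}\neq 0$, and confirming that the $q=-1$ case genuinely carries a non-trivial bracket rather than collapsing. Neither step is computationally heavy once the identity $(1+q^m)(1+q^n)-(1+q^{m+n})=q^m+q^n$ and the factorization $(\{m\}-\{n\})(q^m+q^n)=\frac{q^{2n}-q^{2m}}{1-q}$ are in hand.
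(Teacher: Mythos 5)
Your proposal is correct and takes essentially the same route as the paper: both invoke Corollary \ref{cor:Hom-algebramultiplicativity} \ref{item2:corollary}, use the identity $(1+q^m)(1+q^n)-(1+q^{m+n})=q^m+q^n$ to reduce multiplicativity to $c^{m+n}_{m,n}(q^m+q^n)=0$, conclude that for $i=1,2$ (where $q=1$, so $q^m+q^n=2\neq 0$) the bracket must vanish, and for $i=3$ reduce to $(\beta+\nu\delta_{q^{m},1})(q^{2n}-q^{2m})=0$, which for a non-trivial bracket forces $q^2=1$ and hence $q=-1$. Your treatment is marginally more careful than the paper's in the $i=3$ forward direction (fixing $m_0$ with nonzero coefficient rather than shuffling quantifiers) and in explicitly exhibiting a non-trivial multiplicative bracket at $q=-1$, but these are refinements of the same argument, not a different one.
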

\begin{proof}
By Corollary \ref{cor:Hom-algebramultiplicativity} item \ref{item2:corollary}:
\begin{enumerate}[label=,leftmargin=0pt]
\item $(\mathcal {V}^{q}, \{\cdot,\cdot\}^1, \alpha)
\ \text{is multiplicative}\ \Leftrightarrow
\forall\ m,n\in\mathbb{Z}:\ 2(\beta+\nu\delta_{m+d,0})(m-n)=0\ \Leftrightarrow $\\
$ \forall\ m\in\mathbb{Z}:\ (\beta+\nu\delta_{m+d,0})=0 \Leftrightarrow
\beta=\nu=0\Leftrightarrow \{\cdot,\cdot\}^1=0.
$
\item
$\text{$(\mathcal {V}^{q}, \{\cdot,\cdot\}^2, \alpha)$ is multiplicative} \Leftrightarrow
\forall\ m,n\in\mathbb{Z}:\ 2\mu\frac{m}{m+d}\gamma\delta_{m,\mathbb{Z}\setminus\{-d\}})(m-n)=0\ \Leftrightarrow $
$ \forall\ m\in\mathbb{Z}:\ \mu\frac{m}{m+d}\gamma\delta_{m,\mathbb{Z}\setminus\{-d\}})=0
 \stackrel{\gamma\neq0}{\Leftrightarrow}
\mu=0
\Leftrightarrow \{\cdot,\cdot\}^2=0.
$
\item
$ \text{$(\mathcal {V}^{q}, \{\cdot,\cdot\}^3, \alpha)$ is multiplicative}\Leftrightarrow $\\
$\forall\ m,n\in\mathbb{Z}:\ (\beta+\nu\delta_{q^{m},1})(\{m\}-\{n\})\big((1+q^{m+n})-(1+q^{m})(1+q^{n})\big)=0\ \Leftrightarrow $ \\
$ \forall\ m\in\mathbb{Z}:\ (\beta+\nu\delta_{q^{m},1})(q^{n}-q^{m})(q^{n}+q^{m})=0 \Leftrightarrow $\\
$ \forall\ m\in\mathbb{Z}:\ (\beta+\nu\delta_{q^{m},1})(q^{2n}-q^{2m})=0\Leftrightarrow $\\
$ \forall\ m,n\in\mathbb{Z}:\ \left\{
  \begin{array}{ll}
    (q^{2(n-m)})=1,\text{ or } \\
    (\beta+\nu\delta_{q^{m},1})=0,
  \end{array}
\right.\Leftrightarrow $\\
$\left\{
  \begin{array}{ll}
   q^{2p}=1,~\forall p\in\mathbb{Z} ,\text{ or } \\
   \forall\ m\in\mathbb{Z}:\ \beta+\nu\delta_{q^{m},1}=0,
  \end{array}
\right.
\Leftrightarrow
\left\{
  \begin{array}{ll}
   q=-1 ,\text{ or }\hspace{1cm}(\text{since }q\neq1) \\
   \beta=\nu=0.
  \end{array}
\right. $ \qedhere
\end{enumerate}
\end{proof}

\section{On homogeneous averaging operators on \hmath $q$-deformed
\hmath $W$(2,2) Hom-algebra}\label{section4}
In this section we classify the homogeneous averaging operators on the $q$-deformed $W(2,2)$ Hom-algebra $\mathcal{W}^{q}$. Also, we give the induced Hom-Leibniz algebras
from the averaging operators on the $q$-deformed $W(2,2)$ Hom-algebra and its multiplicativity condition is studied.
\begin{defn}
A homogeneous operator $F$ with degree $d\in\mathbb{Z}$ on the
$q$-deformed $W(2,2)$ Hom-algebra $\mathcal{W}^{q}$ is a linear operator on $\mathcal{W}^{q}$ satisfying
$F(\mathcal{W}^{q}_{m}) \subseteq \mathcal{W}^{q}_{m+d}$
for all $m \in \mathbb{Z}$.
\end{defn}
Hence a homogeneous averaging operator $P_d$ with degree $d$ on the $q$-deformed $W(2,2)$ Hom-algebra $\mathcal{W}^{q}$
is an averaging operator on $\mathcal{W}^{q}$ with the following form:
\begin{eqnarray}
    P_{d}(L_{m})=f_1(m+d)L_{m+d}+g_1(m+d)W_{m+d}\label{W operators 1},\\
     P_{d}(W_{m})=f_2(m+d)L_{m+d}+g_2(m+d)W_{m+d}\label{W operators 2},
\end{eqnarray}
where $f_i$ and $g_i$ are $\mathbb{K}$-valued functions defined on $\mathbb{Z}$.

Let $P_d$ be a homogeneous averaging operator of degree $d$ on the $q$-deformed $W(2,2)$ Hom-algebra
$\mathcal{W}^{q}$ satisfying equations \eqref{W operators 1} and \eqref{W operators 2}. Then, by equations \eqref{H-L1} and
\eqref{averaging operator 2},
\begin{align*}
   & [P_d(L_m),P_d(L_n)]=[f_1(m+d)L_{m+d}+g_1(m+d)W_{m+d},f_1(n+d)L_{n+d}+g_1(n+d)W_{n+d}]\\
    &\quad =f_1(m+d)f_1(n+d)[m-n]L_{m+n+2d}+f_1(m+d)g_1(n+d)[m-n]W_{n+m+2d}\\
    &\hspace{8cm} -g_1(m+d)f_1(n+d)[n-m]W_{m+n+2d},\\*[0,2cm]
   & P_d([P_d(L_m),L_n])=P_d([f_1(m+d)L_{m+d}+g_1(m+d)W_{m+d},L_n])\\
    &\quad=f_1(m+d)[m+d-n]P_d(L_{m+n+d})-g_1(m+d)[n-m-d]P_d(W_{m+n+d})\\
    &\quad=f_1(m+d)f_1(m+n+2d)[m+d-n]L_{m+n+2d}\\&\quad\quad+f_1(m+d)g_1(m+n+2d)[m+d-n]W_{m+n+2d}\\
    &\quad\quad-g_1(m+d)f_2(m+n+2d)[n-m-d]L_{m+n+2d}\\
    &\quad\quad-g_1(m+d)g_2(m+n+2d)[n-m-d]W_{m+n+2d},
\end{align*}
we see that the functions $f_i$ and $g_i$ satisfy, for all $m,n\in \mathbb{Z}$, the following equations:
\begin{align}
&f_1(m)f_1(n)[m-n]-f_1(m)f_1(m+n)[m+d-n]+g_1(m)f_2(m+n)[n-m-d]=0,\label{first eq}\\
&
\begin{array}{l}
f_1(m)g_1(n)[m-n]-f_1(n)g_1(m)[n-m]-f_1(m)g_1(m+n)[m+d-n] \\
\hspace{8cm} +g_1(m)g_2(m+n)[n-m-d]=0.
\end{array}
\label{second eq}
\end{align}
and from
\begin{align*}
  & [P_d(L_{m}),P_d(W_n)]=[f_1(m+d)L_{m+d}+g_1(m+d)W_{m+d},f_2(n+d)L_{n+d}+g_2(n+d)W_{n+d}]\\
   &\quad=f_1(m+d)f_2(n+d)[m-n]L_{m+n+2d}+f_1(m+d)g_2(n+d)[m-n]W_{m+n+2d}\\
   &\hspace{8cm}-g_1(m+d)f_2(n+d)[n-m]W_{m+n+2d},\\*[0,2cm]
   &P_d([P_d(L_m),W_n])=P_d([f_1(m+d)L_{m+d}+g_1(m+d)W_{m+d},W_n])\\
   &\quad=f_1(m+d)f_2(m+n+2d)[m+d-n]L_{m+n+2d}\\
   &\hspace{6cm}+f_1(m+d)g_2(m+n+2d)[m+d-n]W_{m+n+2d},
   \end{align*}
we see that the functions $f_i$ and $g_i$ satisfy, for all $m,n\in \mathbb{Z}$, the following equations:
\begin{align}
&f_{1}(m)f_2(n)[m-n]-f_1(m)f_2(m+n)[m+d-n]=0,\label{fifth eq}\\
&f_1(m)g_2(n)[m-n]-g_1(m)f_2(n)[n-m]-f_1(m)g_2(m+n)[m+d-n]=0.\label{sixth eq}
\end{align}
In the same way, from
\begin{align*}
   & [P_d(W_m),P_d
(W_n)]=[f_2(m+d)L_{m+d}+g_2(m+d)W_{m+d},f_2(n+d)L_{n+d}+g_2(n+d)W_{n+d}]\\
   &\quad =[f_2(m+d)f_2(n+d)[m-n]L_{n+m+2d}+f_2(m+d)g_2(n+d)[m-n]W_{m+n+2d}\\
    &\hspace{8cm} -g_2(m+d)f_2(n+d)[n-m]W_{m+n+2d},\\*[0,2cm]
    &P_d
([P_d
(W_{m}),W_n])=P_d
([f_2(m+d)L_{m+d}+g_2(m+d)W_{m+d},W_n]\\
   &\quad=f_2(m+d)f_2(m+n+2d)[m+d-n]L_{m+n+2d}+f_2(m+d)g_2(m+n+2d)W_{m+n+2d},\\*[0,2cm]
&P_d
([W_m,P_d
(W_m])=P_d
([W_m,f_2(n+d)L_{n+d}+g_2(n+d)W_{n+d}])\\
&\quad\quad-f_2(n+d)f_2(m+n+2d)[m-n-d]L_{m+n+2d}\\
&\quad\quad-f_2(n+d)g_2(m+n+2d)[m-n-d]W_{m+n+2d},
\end{align*}
we see that the functions $f_i$ and $g_i$ satisfy for all $m,n\in \mathbb{Z}$, the following equations:
\begin{align}
    &f_2(m)f_2(n)[m-n]-f_2(m)f_2(m+n)[m+d-n]=0,\label{nineth eq}\\
    &f_2(m)g_2(n)[m-n]-f_2(n)g_2(m)[n-m]-f_2(m)g_2(m+n)[m+d-n]=0.\label{tenth eq}
\end{align}
Similarly from
\begin{align*}
   & [P_d
(W_m),P_d
(L_n)]=[f_2(m+d)L_{m+d}+g_2(m+d)W_{m+d},f_1(n+d)L_{n+d}+g_1(n+d)W_{n+d}]\\
    &\quad =f_2(m+d)f_1(n+d)[m-n]L_{m+n+2d}\\
    &\quad\quad+f_2(m+d)g_1(n+d)[m-n] W_{m+n+2d}-g_2(m+d)f_1(n+d)[m-n]W_{m+n+2d},\\*[0,2cm]
&P_d
([P_d
(W_{m}),L_{n}])=P_d
([f_2(m+d)L_{m+d}+g_2(m+d)W_{m+d},L_n])\\
&\quad=f_2(m+d)[m+d-n](f_1(m+n+2d)L_{m+n+2d}+g_1(m+n+2d)L_{m+n+2d})\\
&\quad\quad g_2(m+d)[n-d-m](f_2(m+n+2d)L_{m+n+2d}+g_2(m+n+2d)W_{m+n+2d}),
\end{align*}
we see that the functions $f_i$ and $g_i$ satisfy, for all $m,n\in \mathbb{Z}$,
\begin{align}
&f_2(m)f_1(n)[m-n]-f_2(m)f_1(m+n)[m+d-n]+g_2(m)f_2(m+n)[n-m-d]=0,\label{thirteenth eq}\\
&\begin{array}{l}
f_2(m)g_1(n)[m-n]-g_2(m)f_1(n)[n-m]-f_2(m)g_1(m+n)[m+d-n]\\
\hspace{8cm} +g_2(m)g_2(m+n)[n-m-d]=0.
\end{array}
\label{fourteenth eq}
\end{align}
\begin{lem}\label{lemma3}
If $P_{d}$ be a non zero averaging operator on $q$-deformed $W(2,2)$ Hom-algebra $\mathcal{W}^{q}$ with degree $d$. Then  $\alpha\circ P_{d}=P_{d}\circ\alpha$ if and only if $q^d=1$.
\end{lem}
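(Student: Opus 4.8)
The plan is to follow the pattern of Lemma~\ref{commuteWitt}, exploiting that $\alpha$ acts on each homogeneous component $\mathcal{W}^q_n=\mathrm{span}_{\mathbb{K}}\{L_n,W_n\}$ simply as multiplication by the scalar $q^n+q^{-n}$, while $P_d$ raises the degree by $d$. First I would apply both composites to the basis vectors $L_m$ and $W_m$ using \eqref{W operators 1} and \eqref{W operators 2}. Because $\alpha$ is scalar on every homogeneous component, $\alpha\circ P_d$ multiplies each of the four coefficients $f_1(m+d),g_1(m+d),f_2(m+d),g_2(m+d)$ by $q^{m+d}+q^{-m-d}$, whereas $P_d\circ\alpha$ multiplies the same coefficients by $q^m+q^{-m}$. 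Comparing the coefficients of $L_{m+d}$ and $W_{m+d}$ then shows that \eqref{Palphacom} holds if and only if, for every $m\in\mathbb{Z}$ and every $\varphi\in\{f_1,g_1,f_2,g_2\}$,
\[
\varphi(m+d)\,\big((q^{m+d}+q^{-m-d})-(q^m+q^{-m})\big)=0.
\]

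Next I would factor the scalar coefficient as
\[
(q^{m+d}+q^{-m-d})-(q^m+q^{-m})=q^{-m-d}(q^{d}-1)(q^{2m+d}-1),
\]
which reduces the commutativity condition to $\varphi(m+d)\,q^{-m-d}(q^{d}-1)(q^{2m+d}-1)=0$ for all $m$ and all four functions. The backward implication is then immediate: if $q^d=1$, this scalar vanishes identically, so $\alpha\circ P_d=P_d\circ\alpha$ holds with no constraint on the $f_i,g_i$, and the hypothesis $P_d\neq 0$ is not even needed here.

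The forward implication is where the main obstacle lies, and it is genuinely more delicate than its Witt counterpart. In Lemma~\ref{commuteWitt} the analogous factor was $q^m(q^d-1)$, whose $m$-dependent part $q^m$ never vanishes, so commutation of a nonzero operator forced $q^d=1$ at once. Here the extra factor $q^{2m+d}-1$ does depend on $m$ and vanishes at the isolated resonant degrees with $q^{2m+d}=1$; hence $P_d\neq 0$ together with \eqref{Palphacom} only forces the four coefficient functions to be supported on $\{m:\,q^{2m+d}=1\}$, and does not by itself yield $q^d=1$. To finish I would argue by contradiction: assuming $q^d\neq 1$, I would feed this support restriction into the averaging identities \eqref{first eq}--\eqref{fourteenth eq}, expecting them to propagate a nonzero coefficient value along the degree shifts produced by the brackets and thereby force some $f_i$ or $g_i$ to be nonzero at a non-resonant degree $m$ with $q^{2m+d}\neq 1$. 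That contradicts the support restriction, leaving $q^d=1$ as the only possibility; the nonvanishing of $P_d$ is precisely what makes this implication non-vacuous.
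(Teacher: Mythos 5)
The first half of your proposal is correct and coincides with the paper's computation: the coefficient-wise condition for \eqref{Palphacom}, the factorization
\[
(q^{m+d}+q^{-m-d})-(q^{m}+q^{-m})=q^{-m-d}\,(q^{d}-1)\,(q^{2m+d}-1),
\]
and the backward implication (where, as you note, $P_d\neq 0$ is not needed). Your bookkeeping in the forward direction is in fact more careful than the paper's own proof, which asserts that $\alpha\circ P_{d}(L_m)=P_{d}\circ\alpha(L_m)$ for all $m$ is \emph{equivalent} to $q^{m}+q^{-m}=q^{m+d}+q^{-m-d}$ for all $m$ --- the coefficient functions are silently dropped --- and then reaches $q^{2m+d}=1$ for all $m$, contradicted at $m=0$. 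You correctly observe that commutation plus $P_d\neq 0$ only confines the supports of $f_1,g_1,f_2,g_2$ to the resonant set $\{m: q^{2m+d}=1\}$. The genuine gap is that your forward implication stops there: the claimed contradiction via the averaging identities \eqref{first eq}--\eqref{fourteenth eq} is only announced, never derived.

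This gap cannot be closed, because the forward implication is false as stated, so the propagation you hope for does not occur. Take $f_1=f_2=g_2=0$ and $g_1$ arbitrary: every summand in each of \eqref{first eq}--\eqref{fourteenth eq} contains a factor $f_1$, $f_2$ or $g_2$, so all the averaging identities hold identically. (Invariantly: such a $P_d$ maps $\mathcal{W}^{q}$ into the abelian ideal spanned by the $W_n$ and annihilates that ideal, so all three expressions in \eqref{avopaxiom} vanish.) Hence the averaging identities impose no condition whatsoever on $g_1$, and there is nothing to propagate. Choosing $g_1$ supported in the resonant set then produces counterexamples whenever that set is nonempty while $q^{d}\neq 1$; the simplest has even degree: $q=2$, $d=2$, $P_2(L_{-1})=W_{1}$ and $P_2=0$ on all other basis vectors. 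This $P_2$ is a nonzero homogeneous averaging operator of degree $2$ satisfying $\alpha\circ P_2=P_2\circ\alpha$ (the only nonzero coefficient sits at $m=-1$, where $q^{2m+d}=q^{0}=1$), yet $q^{d}=4\neq 1$. (For odd $d$ one can take $q$ a primitive cube root of unity and $d=1$.) So the step of your plan that fails is the final contradiction itself: the statement of Lemma~\ref{lemma3} needs additional hypotheses (for instance, some coefficient nonzero at a non-resonant degree, or restrictions on $(q,d)$), and the paper's proof goes through only by virtue of the unjustified equivalence that you, rightly, declined to use.
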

\begin{proof}
For all $m\in\mathbb{Z},$ we have
\begin{align*}
\alpha\circ P_{d}(L_{m})&=\alpha(  P_{d}(L_{m}))=\alpha(f_1(m+d)L_{m+d}+g_1(m+d)W_{m+d})\\
&=f_1(m+d)(q^{m+d}+q^{-(m+d)})L_{m+d}+g_1(m+d)(q^{m+d}+q^{-(m+d)})W_{m+d},\\
P_{d}\circ\alpha(L_{m})&=P_{d}(\alpha(L_{m}))=P_{d}((q^{m}+q^{-m})L_{m})\\
&=f_1(m+d)(q^{m}+q^{-m})L_{m}+g_1(m+d)(q^{m}+q^{-m})W_{m}.
\end{align*}
Then $\forall m\in\mathbb{Z},~\alpha\circ P_{d}(L_m)=P_{d}\circ\alpha(L_m)$ if and only if $q^{m}+q^{-m}=q^{m+d}+q^{-m-d}.$  Similarly, $\forall m\in\mathbb{Z},~\alpha\circ P_{d}(W_m)=P_{d}\circ\alpha(W_m)$ if and only if $q^{m}+q^{-m}=q^{m+d}+q^{-m-d}.$ Thus,
\begin{enumerate}
\item
if $q^{d}=1$, it is clear that $\alpha\circ P_{d}=P_{d}\circ\alpha$,
\item
if $q^{d}\neq 1$, we have
\begin{align*}
&\alpha\circ P_{d}=P_{d}\circ\alpha\\
&\Longleftrightarrow
\forall m\in\mathbb{Z},~q^{m}+q^{-m}=q^{m+d}+q^{-m-d}
\Longleftrightarrow \forall m\in\mathbb{Z},~q^m(1-q^d)=q^{-m-d}(1-q^d)\\
&\begin{array}[b]{l}
\text{\scriptsize ($q^d\neq 1$)} \\
\Longleftrightarrow \forall m\in\mathbb{Z},~q^m=q^{-m-d}
\end{array}
\Longleftrightarrow  \forall m\in\mathbb{Z},~q^{2m+d}=1,
\end{align*}
this implies for $m=0$, $q^d=1$ which impossible since $q^d\neq 1$.
\qedhere
\end{enumerate}
\end{proof}
\subsection*{Case 1: $q\neq -1,1$ and $q^{d}=1$}
\subsubsection*{Subcase 1: $q^{2m}=1$}
  Take $n=0$ in \eqref{first eq}-\eqref{fourteenth eq}. For $q^{d}=1$ and $q^{2m}=1$, the functions $f_1,~f_2,~g_1$ and $g_2$ satisfy
$$
f_1(m)=\nu_1,\ f_2(m)=\nu_2,\ g_1(m)=\nu_3,\ g_2(m)=\nu_4,\ \nu_i\in\mathbb{K}.
$$
Then we have the following Proposition.
\begin{prop}\label{prop1:q neq 1,-1}
If $P_d$ is an averaging operator on $\mathcal{W}^{q}$ with degree $d$, where $q^{d}=1, q^{2m}=1$, then
$$
\left\{
  \begin{array}{ll}
  f_1(m)=\nu_1, &
    f_2(m)=\nu_2,\\
     g_1(m)=\nu_3,&
     g_2(m)=\nu_4,
  \end{array}\right.
\quad \text{where $\nu_i\in\mathbb{K}.$} $$
  \end{prop}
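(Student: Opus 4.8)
The plan is to feed $n=0$ into all eight averaging relations \eqref{first eq}, \eqref{second eq}, \eqref{fifth eq}, \eqref{sixth eq}, \eqref{nineth eq}, \eqref{tenth eq}, \eqref{thirteenth eq}, \eqref{fourteenth eq}, and to show that under the standing hypotheses $q^{d}=1$ and $q^{2m}=1$ each of them degenerates to a trivial identity. First I would record how the degree-$d$ shift interacts with the $q$-numbers: since $q^{d}=1$ we have $q^{-d}=1$ and $[d]=\frac{q^{d}-q^{-d}}{q-q^{-1}}=0$, so the addition rule $q^{-d}[a]+q^{a}[d]=[a+d]$ from Example~\ref{ex2} collapses to $[a+d]=[a]$ for every $a\in\mathbb{Z}$. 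Together with $[-a]=-[a]$, this means that at $n=0$ the three kinds of brackets occurring in the relations simplify to $[m-n]=[m]$, $[m+d-n]=[m]$ and $[n-m-d]=-[m]$.

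With these reductions in hand, the second step is purely mechanical: each relation at $n=0$ acquires a common factor $[m]$. For example \eqref{fifth eq} becomes $[m]\,f_1(m)\bigl(f_2(0)-f_2(m)\bigr)=0$, \eqref{nineth eq} becomes $[m]\,f_2(m)\bigl(f_2(0)-f_2(m)\bigr)=0$, and \eqref{first eq} becomes $[m]\bigl(f_1(m)f_1(0)-f_1(m)^{2}-g_1(m)f_2(m)\bigr)=0$, with the remaining five relations factoring in exactly the same way. This factorization is the only computation involved, and it is routine once the bracket identities above are established.

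Finally I would invoke the subcase hypothesis. Because $q\neq\pm1$, the $q$-number identities of Example~\ref{ex2} give $[m]=0$ if and only if $q^{2m}=1$; hence in this subcase $[m]=0$ and every one of the eight relations at $n=0$ is satisfied identically. The upshot is that the averaging axioms place \emph{no} constraint on the four scalars $f_1(m),f_2(m),g_1(m),g_2(m)$ at such an index, so each may be assigned an arbitrary value in $\mathbb{K}$; denoting these by $\nu_1,\nu_2,\nu_3,\nu_4$ respectively gives the stated form, while the commutativity condition $\alpha\circ P_d=P_d\circ\alpha$ is already secured by $q^{d}=1$ through Lemma~\ref{lemma3}. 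The only real subtlety---and what I expect to be the main point rather than an obstacle---is recognizing this vacuity: the apparent content of the eight relations evaporates precisely because the prefactor $[m]$ vanishes, so the classification in this subcase reduces to the observation that the four values are unconstrained.
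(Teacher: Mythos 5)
Your proof is correct and takes essentially the same route as the paper: the paper's own (very terse) argument is exactly to set $n=0$ in the relations \eqref{first eq}--\eqref{fourteenth eq} and observe that under $q^{d}=1$ and $q^{2m}=1$ they impose no constraint, so the four values $f_1(m),f_2(m),g_1(m),g_2(m)$ are arbitrary scalars $\nu_1,\dots,\nu_4$. Your write-up simply makes explicit what the paper leaves implicit, namely the reduction $[a+d]=[a]$, the common factor $[m]$ in every relation at $n=0$, and the equivalence $[m]=0\Leftrightarrow q^{2m}=1$ valid for $q\neq\pm1$.
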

\subsubsection*{Subcase 2: $q^{2m}\neq 1$.}
Taking $n=0$ in \eqref{nineth eq} yields
$$
\begin{array}[t]{l} f_2(m)f_2(0)[m]=f_{2}^{2}(m)[m+d] =f_2^{2}(m)(q^{-d}[m]+q^{m}[d])
\begin{array}[t]{l}
=f_2^{2}(m)[m].\\
\text{\small (since $q^{d}=1$)}
\end{array}
\end{array}
$$
This gives $f_2(m)(f_2(0)-f_2(m))=0$. Hence,
\begin{equation*}
f_2(m)=\mu_1 f_2(0),\quad\mu_1\in\{0,1\}.
\end{equation*}
Then, we have the following Proposition.
\begin{prop}\label{prop2:q neq 1,-1}
If $P_d$ is an averaging operators on $\mathcal{W}^{q}$ with degree $d$ such that $q^{d}=1$, $q^{2m}\neq 1$ and $f_2(0)=0,$
then
\begin{enumerate}
    \item if $f_1(0)=0$, then
$f_1(m)=0,\
  f_2(m)=0,\
     g_1(m)=\gamma,\
  g_2(m)= 0,
$
where $\gamma\in\mathbb{K}$;
  \item  if $f_1(0)\neq 0$, then
\begin{enumerate}
  \item
 $ f_1(m)=f_1(0),\
  f_2(m)=0,\
     g_1(m)=\gamma,\
  g_2(m)= 0$, where $\gamma\in\mathbb{K};$
  \item
  $
  f_1(m)=f_1(0),\
  f_2(m)=0,\
     g_1(m)=0,\
  g_2(m)=f_1(0)$, where $\gamma\in\mathbb{K};$
  \item
  $
  f_1(m)=0,\
  f_2(m)=0,\
     g_1(m)=\gamma,\
  g_2(m)= f_1(0)$, where $\gamma\in\mathbb{K}.$
\end{enumerate}
  \end{enumerate}
\end{prop}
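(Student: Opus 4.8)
The plan is to mimic, on the index set where $q^{2m}\neq1$, the $n=0$ reduction that yielded Proposition~\ref{prop1:q neq 1,-1}, and then to substitute the resulting values back into the full system \eqref{first eq}--\eqref{fourteenth eq} to separate the admissible profiles. Since $q^{d}=1$ forces $\alpha\circ P_d=P_d\circ\alpha$ by Lemma~\ref{lemma3}, only the averaging identity, encoded in \eqref{first eq}--\eqref{fourteenth eq}, must be analysed. The key simplification is that $q\neq\pm1$ and $q^{d}=1$ give $[d]=\frac{q^{d}-q^{-d}}{q-q^{-1}}=0$, whence the $q$-number identities yield $[a+d]=[a]$ and $[a-d]=[a]$ for every $a\in\mathbb{Z}$; in particular $[m+d-n]=[m-n]$, $[n-m-d]=-[m-n]$ and $[m+d]=[m]$. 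Putting $n=0$ in \eqref{nineth eq} and using $[m]\neq0$ (which is precisely $q^{2m}\neq1$) together with the hypothesis $f_2(0)=0$ gives $f_2(m)=0$ on this set; equations \eqref{fifth eq}, \eqref{nineth eq}, \eqref{tenth eq}, \eqref{thirteenth eq} then hold identically for $m,n$ in this set, and the remaining four, \eqref{first eq}, \eqref{second eq}, \eqref{sixth eq}, \eqref{fourteenth eq}, reduce after cancelling the common nonzero factor $[m-n]$ to
\begin{align*}
& f_1(m)\bigl(f_1(n)-f_1(m+n)\bigr)=0,\\
& f_1(m)g_1(n)+f_1(n)g_1(m)-f_1(m)g_1(m+n)-g_1(m)g_2(m+n)=0,\\
& f_1(m)\bigl(g_2(n)-g_2(m+n)\bigr)=0,\\
& g_2(m)\bigl(f_1(n)-g_2(m+n)\bigr)=0.
\end{align*}

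From here I would extract the pointwise data by specialising $n=0$ once more: the first and last relations give $f_1(m)\in\{0,f_1(0)\}$ and $g_2(m)\in\{0,f_1(0)\}$, the third forces $g_2(m)=g_2(0)$ wherever $f_1(m)\neq0$, and the second reduces to the coupling $f_1(m)g_1(0)+f_1(0)g_1(m)-f_1(m)g_1(m)-g_1(m)g_2(m)=0$ that links $g_1(m)$ to the boundary values. Splitting on $f_1(0)$ then organises the statement. If $f_1(0)=0$, both selectors collapse to $0$, so $f_1\equiv g_2\equiv0$ and the coupling degenerates, leaving only $g_1$ to be determined; this is the branch $f_1(0)=0$. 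If $f_1(0)\neq0$, the two selectors may independently equal $0$ or $f_1(0)$, and I would run through the surviving sign patterns, using the second relation in each to decide whether $g_1$ is forced to vanish or survives as a free parameter, thereby producing the three branches listed under $f_1(0)\neq0$.

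The main obstacle is not this $n=0$ bookkeeping but proving that the four families are the only globally consistent ones. Because the reduced equations are valid only when $[m-n]\neq0$, and because the shifted index $m+n$ occurring in $f_1(m+n)$, $g_1(m+n)$, $g_2(m+n)$ may land either inside the set $\{m:q^{2m}\neq1\}$ or in its complement --- where the free values of Proposition~\ref{prop1:q neq 1,-1} take over --- one cannot conclude pointwise. The delicate step is to exclude mixed profiles in which $f_1$ or $g_2$ equals $f_1(0)$ at some indices and $0$ at others: substituting such a pattern into the full (all-$n$) forms of \eqref{second eq}, \eqref{sixth eq} and \eqref{fourteenth eq} and tracking how a choice at $m$ propagates under the additive shift $m\mapsto m+n$ should force each selector to be constant and, simultaneously, pin down the global shape of $g_1$. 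I expect this propagation-and-consistency argument across the two index sets, rather than any single bracket computation, to be the crux of the proof.
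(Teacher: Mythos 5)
Your executed steps are, almost line for line, the paper's own proof. The paper's argument for this proposition is exactly the $n=0$ bookkeeping you carry out: $n=0$ in \eqref{nineth eq} gives $f_2(m)\bigl(f_2(0)-f_2(m)\bigr)=0$, hence $f_2\equiv 0$ on $\{m:q^{2m}\neq 1\}$ once $f_2(0)=0$; $n=0$ in \eqref{first eq} gives $f_1(m)\bigl(f_1(0)-f_1(m)\bigr)=0$; $n=0$ in \eqref{thirteenth eq} (you use \eqref{fourteenth eq}, which yields the same identity $g_2(m)\bigl(f_1(0)-g_2(m)\bigr)=0$); and $n=0$ in \eqref{second eq} gives the coupling $f_1(m)g_1(0)+f_1(0)g_1(m)-f_1(m)g_1(m)-g_1(m)g_2(m)=0$, which the paper rewrites as $g_1(m)(\mu_2+\mu_3-1)f_1(0)=\mu_2 f_1(0)g_1(0)$ after parametrizing $f_1(m)=\mu_2 f_1(0)$, $g_2(m)=\mu_3 f_1(0)$. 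The split on $f_1(0)=0$ versus $f_1(0)\neq 0$ and on the pairs $(\mu_2,\mu_3)$ then produces precisely the four listed families.

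The real divergence is in what you call the crux. The propagation-and-consistency argument you defer --- excluding $m$-dependent selectors (mixed profiles) and controlling what happens when the shifted index $m+n$ crosses into $\{m:q^{2m}=1\}$ --- is simply not in the paper: it writes $\mu_2,\mu_3\in\{0,1\}$ and tacitly treats them as constants independent of $m$, never substituting back into the full two-variable system \eqref{first eq}--\eqref{fourteenth eq}. So your worry does not mark a deficiency of your proposal relative to the paper; it marks a gap in the paper's own proof, which would have to be closed by exactly the kind of argument you sketch (or absorbed into the delta-function bookkeeping of Theorem \ref{classification 2}, where it is likewise not checked). Two small repairs to your write-up: after $f_2\equiv 0$ on the set, equations \eqref{fifth eq} and \eqref{thirteenth eq} do not ``hold identically'' --- they retain the terms $f_1(m)f_2(m+n)$ and $g_2(m)f_2(m+n)$, which are nontrivial cross-set constraints unless $q^{2(m+n)}\neq 1$ as well; and your final specialisation $n=0$ lies outside the index set you restricted to (since $q^{0}=1$), so it must be justified directly from \eqref{first eq}, \eqref{second eq}, \eqref{sixth eq}, \eqref{fourteenth eq} using the hypothesis $f_2(0)=0$ --- which is what the paper does, and which goes through.
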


\begin{proof}
Let $q^{d}=1$, $q^{2m}\neq 1$ and $f_2(0)=0$ as assumed.
Taking $n=0$ in \eqref{first eq} yields
\begin{align*}
f_1(m)f_1(0)[m]=&f_1^{2}(m)[m+d]-g_1(m)f_2(m)[-m-d]\\
=&f_1^{2}(m)[m]+g_1(m)f_2(m)[m] . \quad \text{(since $q^{d}=1$)}
\end{align*}
Since $q^{2m}\neq 0$, we get $f_1(m)f_1(0)=f_1^{2}(m)+g_1(m)f_2(m)$, and since $f_2(m)=0$,
we obtain $f_1(m)(f_1(0)-f_1(m))=0$. Thus,
$f_1(m)=\mu_2 f_1(0)$, $\mu_2\in\{0,1\}$. Setting $n=0$ in \eqref{thirteenth eq} yields
\begin{align*}
f_2(m)g_1(0)[m]=&g_2(m)f_1(0)[-m]+f_2(m)g_1(m)[m+d]-g_2^{2}(m)[-m-d]\\
=&-g_2(m)f_1(0)[m]+f_2(m)g_1(m)[m]+g_2^{2}(m)[m]. \quad \text{(since $q^{d}=1$)}
\end{align*}
Since $q^{2m}\neq1$, we have $f_2(m)g_1(0)=-g_2(m)f_1(0)+f_2(m)g_1(m)+g_2^{2}(m)$, from which together with $f_2(m)=0$, we get
$g_2(m)=\mu_3 f_1(0)$, $\kappa_3\in\{0,1\}.$
Taking $n=0$ in \eqref{second eq} gives
\begin{align*}
f_1(m)g_1(0)[m]=&f_1(0)g_1(m)[-m]+f_1(m)g_1(m)[m+d]-g_1(m)g_2(m)[-m-d]\\
=&-f_1(0)g_1(m)[m]+f_1(m)g_1(m)[m]+g_1(m)g_2(m)[m], \quad \text{(since $q^{d}=1$)}
\end{align*}
from which with $q^{2m}\neq1$ we get $f_1(m)g_1(0)=-f_1(0)g_1(m)+f_1(m)g_1(m)+g_1(m)g_2(m)$, and with $f_1(m)=\mu_2 f_1(0)$ and $g_2(m)=\mu_3 f_1(0)$, we obtain $g_1(m)(\mu_2+\mu_3-1)f_1(0)=\mu_2 f_1(0)g_1(0)$. Then, we have the two cases:
\begin{enumerate}
    \item
    if $f_1(0)=0$, then
    $g_1(m)=\gamma$,
    \item  if $f_1(0)\neq0$ we have $\mu_2 g_1(0)=(\mu_2+\mu_3-1)g_1(m)$
    then for $\mu_2=1$ and $\mu_3=0$ gives $g_1(0)=0$. Then
        $\left\{
  \begin{array}{ll}
  g_1(m)=0 & \ \text{if} \ (\mu_2,\mu_3)\in\{(0,0),(1,1)\},\\
 g_1(m)=\gamma & \ \text{if} \ (\mu_2,\mu_3)\in\{(1,0),(0,1)\}.
  \end{array}\right.$
\qedhere
\end{enumerate}
\end{proof}
\begin{prop}\label{prop3:q neq 1,-1}
If $P_d$ is the averaging operator on $\mathcal{W}^{q}$ with degree $d$ satisfying $q^{d}=1$, $q^{2m}\neq 1$ and $f_2(0)\neq0$, then
$
\left\{
  \begin{array}{ll}
  f_1(m)=\gamma, & f_2(m)=f_2(0), \\
     g_1(m)=\frac{\gamma f_1(0)-\gamma^{2}}{f_2(0)}, & g_2(m)= f_1(0)-\gamma,
  \end{array}\right.
$
where $\gamma\in\mathbb{K}$.
\end{prop}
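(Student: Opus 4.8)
The plan is to rerun the $n=0$ reduction that produced Propositions \ref{prop1:q neq 1,-1} and \ref{prop2:q neq 1,-1}, but now under the three standing hypotheses $q^{d}=1$, $q^{2m}\neq 1$ and $f_2(0)\neq 0$. Commutativity of $P_d$ with $\alpha$, hence $q^{d}=1$, is supplied by Lemma \ref{lemma3}. The key computational simplification is that $q^{d}=1$ forces $q^{d}=q^{-d}=1$, so every shifted $q$-number collapses, $[a\pm d]=[a]$ for all $a\in\mathbb{Z}$ (the identity already used just before these propositions). Consequently the eight functional relations \eqref{first eq}, \eqref{second eq}, \eqref{fifth eq}, \eqref{sixth eq}, \eqref{nineth eq}, \eqref{tenth eq}, \eqref{thirteenth eq}, \eqref{fourteenth eq} lose their $d$-shifts after setting $n=0$. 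Moreover $q^{2m}\neq 1$ gives $[m]\neq 0$ by the characterization recorded in Example \ref{ex2}, so after each reduction I may cancel the common factor $[m]$ and be left with purely algebraic relations among $f_1(m),f_2(m),g_1(m),g_2(m)$ and the boundary values $f_1(0),f_2(0)$.

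First I would pin down $f_2$ and $f_1$. Setting $n=0$ in \eqref{nineth eq} yields $[m]f_2(m)(f_2(0)-f_2(m))=0$, hence $f_2(m)\in\{0,f_2(0)\}$, while \eqref{fifth eq} at $n=0$ gives $[m]f_1(m)(f_2(0)-f_2(m))=0$. Because $f_2(0)\neq 0$, the branch $f_2(m)=0$ forces $f_1(m)=0$, and then \eqref{tenth eq} and \eqref{sixth eq} at $n=0$ force $g_2(m)=g_1(m)=0$ as well: this is the degenerate branch on which all four functions vanish at that index, so it is not of the displayed nonzero type and is set aside. Retaining the nontrivial branch $f_2(m)=f_2(0)\neq 0$, I then write $\gamma:=f_1(m)$.

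With $f_2(m)=f_2(0)\neq 0$ in hand, the remaining values fall out in sequence. Equation \eqref{thirteenth eq} at $n=0$ reduces to $[m]f_2(0)\big(f_1(0)-f_1(m)-g_2(m)\big)=0$, giving $g_2(m)=f_1(0)-\gamma$. Equation \eqref{tenth eq} at $n=0$, after substituting $f_2(m)=f_2(0)$, collapses to $[m]f_2(0)g_2(0)=0$, so the boundary value satisfies $g_2(0)=0$. Finally \eqref{sixth eq} at $n=0$ becomes $[m]\big(f_1(m)g_2(0)+g_1(m)f_2(0)-f_1(m)g_2(m)\big)=0$; inserting $g_2(0)=0$ and dividing by $f_2(0)$ gives $g_1(m)=\frac{f_1(m)g_2(m)}{f_2(0)}=\frac{\gamma\big(f_1(0)-\gamma\big)}{f_2(0)}=\frac{\gamma f_1(0)-\gamma^{2}}{f_2(0)}$, which is exactly the asserted form.

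The hard part will not be any individual computation but the branch bookkeeping and the final consistency check. I must be careful that the displayed answer is the nondegenerate branch $f_2(m)=f_2(0)$ (the alternative $f_2(m)=0$ being the trivial component), that the order of deductions is respected so that $g_2(0)=0$ is available before solving for $g_1(m)$, and that the $q$-number identity $[a\pm d]=[a]$ is applied uniformly. It then remains to verify that the four values $f_1(m)=\gamma$, $f_2(m)=f_2(0)$, $g_2(m)=f_1(0)-\gamma$, $g_1(m)=\big(\gamma f_1(0)-\gamma^{2}\big)/f_2(0)$ are compatible with the still-unused relations \eqref{first eq}, \eqref{second eq} and \eqref{fourteenth eq} at $n=0$; each should, after the same collapse of shifts and cancellation of $[m]$, reduce to an identity once the four values are substituted, confirming that no further constraint is imposed.
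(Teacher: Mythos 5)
Your proposal is correct and follows essentially the same route as the paper: set $n=0$ in the defining relations, use $q^{d}=1$ to collapse $[a\pm d]$ to $[a]$ and $q^{2m}\neq 1$ to cancel the factor $[m]$, split on the two branches $f_2(m)\in\{0,f_2(0)\}$ coming from \eqref{nineth eq}, and solve the nondegenerate branch (setting aside the all-zero branch, exactly as the paper's $\mu_1=0$ case). The only immaterial difference is bookkeeping: the paper reads $g_1(m)$ directly off \eqref{first eq} at $n=0$, whereas you obtain it from \eqref{sixth eq} after first deriving $g_2(m)=f_1(0)-\gamma$ from \eqref{thirteenth eq} and $g_2(0)=0$ from \eqref{tenth eq}; both computations produce the same formulas.
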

  \begin{proof}
Let $q^{d}=1,~~q^{2m}\neq 1$ and $f_2(0)\neq 0,$ as assumed.
Taking $n=0$ in \eqref{fifth eq} yields
\begin{align*}f_1(m)f_2(0)[m]=&f_1(m)f_2(m)[m+d]\\
=&f_1(m)f_2(m)[m]. \quad \text{(since $q^{d}=1$)}
\end{align*} Since $q^{2m}\neq1$ we have
$f_1(m)f_2(0)=f_1(m)f_2(m)$.
This together with $f_2(m)=\mu_1 f_2(0)$ gives $f_1(m)(\mu_1-1)=0$. Then
$
f_1(m)=\left\{
\begin{array}{lllll}
 0&if&\mu=0, \\
 \gamma &if& \mu_1=1.
\end{array}\right.
$
Taking $n=0$ in \eqref{tenth eq} yields
\begin{align*}
    f_2(m)g_2(0)[m]=&f_2(0)g_2(m)[-m]+f_2(m)g_2(m)[m+d]\\
    =&-f_2(0)g_2(m)[m]+f_2(m)g_2(m)[m]. \quad \text{(since $q^{d}=1$})
\end{align*}
Since $q^{2m}\neq 1$, we have
$f_2(m)g_2(0)=-f_2(0)g_2(m)+f_2(m)g_2(m)$.
This, with $f_2(m)=\mu_1 f_2(0)$, gives
$\mu_1 f_2(0)g_2(0)=-f_2(0)g_2(m)+\mu_1 f_2(0) g_2(m)$. Then
\begin{enumerate}
    \item if $\mu_1=0$ we have $g_2(m)=0$,
    \item if $\mu_1=1$ we have $g_2(0)=0$.
\end{enumerate}
Taking $n=0$ in \eqref{first eq} yields
\begin{align*}
    f_1(m)f_1(0)[m]=&f_1^{2}(m)[m+d]-g_1(m)f_2(m)[-m-d]\\
    =&f_1^{2}(m)[m]+g_1(m)f_2(m)[m].\quad \text{(since $q^{d}=1$)}
\end{align*}
Since $q^{2m}\neq 1$, we have
$f_1(m)f_1(0)=f_1^{2}(m)+g_1(m)f_2(m)$.
This, with $f_2(m)=\mu_1 f_2(0)$, $f_1(m)=\gamma$ and $\mu_1=1$, yields $g_1(m)=\frac{\gamma f_1(0)-\gamma^{2}}{f_2(0)}$.

Taking $n=0$ in \eqref{sixth eq} yields
\begin{align*}
    f_1(m)g_2(0)[m]=&f_2(0)g_1(m)[-m]+f_1(m)g_2(m)[m+d]\\
   =&-f_2(0)g_1(m)[m]+f_1(m)g_2(m)[m]. \quad \text{(since $q^{d}=1$)}
\end{align*}
Since $q^{2m}\neq0$ we have
$ f_1(m)g_2(0)=-f_2(0)g_1(m)+f_1(m)g_2(m)$.
This together with $g_2(0)=0$ and $f_1(m)=g_2(m)=0$ for $\mu_1=0$ gives $g_1(m)=0$.

Taking $n=0$ in the equation \eqref{fourteenth eq}, we have
\begin{align*}
    f_2(m)f_1(0)[m]=&f_2(m)f_1(m)[m+d]-g_2(m)f_2(m)[-m-d]\\
  =&f_2(m)f_1(m)[m]+g_2(m)f_2(m)[m]. \quad \text{(since $q^{d}\neq 1$)}
\end{align*}
Then
$ f_2(m)f_1(0)=f_2(m)f_1(m)+g_2(m)f_2(m)$.
This, together with $f_1(m)=\gamma$ for $\mu_1=1$, gives $g_2(m)=f_1(0)-\gamma$.
\end{proof}
\begin{thm}\label{classification 2}
Homogeneous averaging operators on the $q$-deformed $W(2,2)$ Hom-algebra $\mathcal{W}^{q}$ with degree $d$ such that $q^{d}=1$ and $q\neq -1,1$
must be one of the following operators, given for all $m\in \mathbb{Z}$, by
\begin{align*}
& \left\{
  \begin{array}{lllll}
 P_d^{1}(L_m)=\nu_1\delta_{q^{2m},1}L_{m+d}+(\nu_3\delta_{q^{2m},1}+\gamma)W_{m+d},\\
P_d^{1}(W_m)=\nu_2\delta_{q^{2m},1}L_{m+d}+\nu_4\delta_{q^{2m},1}W_{m+d},
  \end{array}\right.\\[0.3cm]
&\left\{
  \begin{array}{lllll}
 P_d^{2}(L_m)=(\nu_1\delta_{q^{2m},1}+\beta)L_{m+d}+(\nu_3\delta_{q^{2m},1}+\gamma)W_{m+d},\\
P_d^{2}(W_m)=\nu_2\delta_{q^{2m},1}L_{m+d}+\nu_4\delta_{q^{2m},1}W_{m+d},
  \end{array}\right.\\[0.3cm]
& \left\{
  \begin{array}{lllll}
 P_d^{3}(L_m)=(\nu_1\delta_{q^{2m},1}+\beta)L_{m+d}+\nu_3\delta_{q^{2m},1}W_{m+d},\\
P_d^{3}(W_m)=\nu_2\delta_{q^{2m},1}L_{m+d}+(\nu_4\delta_{q^{2m},1}+\beta)W_{m+d},
  \end{array}\right.\\[0.3cm]
&\left\{
  \begin{array}{l}
 P_d^{4}(L_m)=(\nu_1\delta_{q^{2m},1}+\gamma)L_{m+d}+\nu_3\delta_{q^{2m},1}W_{m+d},\\
P_d^{4}(W_m)=\nu_2\delta_{q^{2m},1}L_{m+d}+(\nu_4\delta_{q^{2m},1}+\beta)W_{m+d},
  \end{array}\right.\\[0.3cm]
& \left\{
  \begin{array}{lllll}
 P_d^{5}(L_m)=(\nu_1\delta_{q^{2m},1}+\gamma)L_{m+d}+(\nu_3\delta_{q^{2m},1}+\frac{\gamma \theta-\gamma^{2}}{\beta})W_{m+d},\\
P_d^{5}(W_m)=(\nu_2\delta_{q^{2m},1}+ \beta)L_{m+d}+(\nu_4\delta_{q^{2m},1}+\theta-\gamma)W_{m+d},
  \end{array}\right.
\end{align*}
where $\gamma,\theta,\nu_1,\nu_2,\nu_3,\nu_4\in\mathbb{K}$ and $\beta\in\mathbb{K}^{\ast}$.
\end{thm}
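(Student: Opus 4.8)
The plan is to assemble the classification by combining Lemma~\ref{lemma3} with the three structural Propositions~\ref{prop1:q neq 1,-1}, \ref{prop2:q neq 1,-1} and \ref{prop3:q neq 1,-1}, each of which has already extracted the constraints on the component functions $f_1,f_2,g_1,g_2$ coming from the system \eqref{first eq}--\eqref{fourteenth eq} after specializing $n=0$. First I would invoke Lemma~\ref{lemma3}: since a nonzero homogeneous averaging operator must satisfy $\alpha\circ P_d=P_d\circ\alpha$, the commutativity axiom forces $q^{d}=1$, which is exactly the standing hypothesis, so every operator under consideration automatically commutes with $\alpha$ and the remaining content is purely the solution of the averaging system. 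Throughout, the assumption $q\neq -1,1$ is what makes the vanishing characterization $[n]=0\Leftrightarrow q^{2n}=1$ available from Example~\ref{ex2}.

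Next I would split the index set $\mathbb{Z}$ according to whether $q^{2m}=1$ or $q^{2m}\neq 1$. The crucial bookkeeping observation is that $q^{d}=1$ gives $q^{2(m+d)}=q^{2m}$, hence $\delta_{q^{2(m+d)},1}=\delta_{q^{2m},1}$; this is what lets the two regimes be recorded uniformly in a single formula, with the indicator $\delta_{q^{2m},1}$ selecting the fiber. On the locus $q^{2m}=1$, Proposition~\ref{prop1:q neq 1,-1} shows the four functions are unconstrained constants $\nu_1,\nu_2,\nu_3,\nu_4$, which accounts for the $\nu_i\,\delta_{q^{2m},1}$ summands appearing in each of $P_d^{1},\dots,P_d^{5}$. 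On the complementary locus $q^{2m}\neq 1$, the specialization $n=0$ turns the quadratic system into the scalar relations solved in Propositions~\ref{prop2:q neq 1,-1} and \ref{prop3:q neq 1,-1}.

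The enumeration of the five forms then follows by exhausting the branches of these two propositions. The vanishing case $f_2(0)=0$ splits into the subcase $f_1(0)=0$, yielding $P_d^{1}$, and the subcase $f_1(0)\neq 0$, whose three nontrivial sign patterns $(\mu_2,\mu_3)$ for $f_1(m)=\mu_2 f_1(0)$ and $g_2(m)=\mu_3 f_1(0)$ produce $P_d^{2}$, $P_d^{3}$ and $P_d^{4}$ (the fourth pattern $(0,0)$ being degenerate and absorbed into $P_d^{1}$); while the nonvanishing case $f_2(0)\neq 0$ of Proposition~\ref{prop3:q neq 1,-1} produces $P_d^{5}$, under the identifications $f_2(0)=\beta$, $f_1(0)=\theta$, $g_1=\frac{\gamma\theta-\gamma^{2}}{\beta}$ and $g_2=\theta-\gamma$. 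Superimposing each of these on the constant part from the $q^{2m}=1$ locus, and renaming the free parameters, gives exactly the stated operators.

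The step I expect to be the genuine obstacle is the consistency/sufficiency check, as opposed to the mere collation of necessary conditions. The propositions were derived only from $n=0$, so one must confirm that the candidate functions satisfy the full system \eqref{first eq}--\eqref{fourteenth eq} for all $m,n\in\mathbb{Z}$, and in particular that no cross-relation arises when $m$ lies in the regime $q^{2m}=1$ while $n$ lies in the regime $q^{2n}\neq 1$. This requires substituting the candidate functions back and repeatedly using the $q$-number identities $q^{-n}[m]+q^{m}[n]=[m+n]$ and $q^{n}[m]-q^{m}[n]=[m-n]$ together with $q^{d}=1$. The reason the superposition is legitimate is that $[m]=0$ precisely on the $q^{2m}=1$ fibers, so the factors $[m-n]$, $[m+d-n]$, $[n-m-d]$ governing the brackets collapse exactly where the constant part lives; verifying that this decouples the two regimes in every one of \eqref{first eq}--\eqref{fourteenth eq} is the laborious but decisive part of the argument.
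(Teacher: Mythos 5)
Your proposal follows essentially the paper's own proof: the paper's entire argument for Theorem~\ref{classification 2} is to combine Lemma~\ref{lemma3} with Propositions~\ref{prop1:q neq 1,-1}--\ref{prop3:q neq 1,-1}, splitting $\mathbb{Z}$ into the loci $q^{2m}=1$ and $q^{2m}\neq 1$ and enumerating the branches $(f_2(0)=0$ with $f_1(0)=0$ or $f_1(0)\neq 0$, and $f_2(0)\neq 0)$, which is exactly your assembly (with the added, correct, bookkeeping observation that $q^{d}=1$ gives $\delta_{q^{2(m+d)},1}=\delta_{q^{2m},1}$). The sufficiency check you single out as the ``decisive obstacle'' is not actually needed for the statement as written and is likewise absent from the paper: the theorem asserts only that an averaging operator \emph{must} take one of the five listed forms, i.e.\ pure necessity, and this already follows from the $n=0$ specializations collected in the propositions, since using only a sub-system of \eqref{first eq}--\eqref{fourteenth eq} can at worst make the list of candidate forms too large, never too small.
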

\begin{proof}
Directly by ombining Lemma \ref{lemma3} and Propositions\ref{prop1:q neq 1,-1}-\ref{prop3:q neq 1,-1}.
\end{proof}
\begin{thm}\label{thm:Leibniz1}
The homogeneous averaging operators on the $q$-deformed $W(2,2)$ Hom-algebra $\mathcal{W}^{q}$ with of degree $d$ such that $q^{d}= 1$ and $q\neq -1,1$ obtained in Theorem \ref{classification 2} provide the following Hom-Leibniz algebras
on the underlying linear space $\mathcal{W}^{q}:$
\begin{enumerate}
\item\label{inducedHomLeibnizWq:i1}
  $\{L_{m},L_{n}\}^{1} =\nu_1\delta_{q^{2m},1}[m-n]L_{m+n}+(\nu_3\delta_{q^{2m},1}+\gamma)[m-n]W_{m+n} \\
 \{L_{m},W_{n}\}^{1} =\nu_1\delta_{q^{2m},1}[m-n]W_{m+n}\\
 \{W_{m},L_{n}\}^{1} =\nu_2[m-n]\delta_{q^{2m},1}L_{m+n}+\nu_4\delta_{q^{2m},1}[m-n]W_{m+n}\\
  \{W_{m},W_{n}\}^{1} =\nu_2\delta_{q^{2m},1}[m-n]L_{m+n},$
\item
  $\{L_{m},L_{n}\}^{2} =(\nu_1\delta_{q^{2m},1}+\beta)[m-n]L_{m+n}+(\nu_3\delta_{q^{2m},1}+\gamma)[m-n]W_{m+n} \\
 \{L_{m},W_{n}\}^{2} =(\nu_1\delta_{q^{2m},1}+\beta)[m-n]W_{m+n}\\
 \{W_{m},L_{n}\}^{2} =\nu_2[m-n]\delta_{q^{2m},1}L_{m+n}+\nu_4\delta_{q^{2m},1}[m-n]W_{m+n}\\
  \{W_{m},W_{n}\}^{2} =\nu_2\delta_{q^{2m},1}[m-n]W_{m+n},$
\item
$\{L_{m},L_{n}\}^{3} =(\nu_1\delta_{q^{2m},1}+\beta)[m-n]L_{m+n}+\nu_3\delta_{q^{2m},1}[m-n]W_{m+n} \\
 \{L_{m},W_{n}\}^{3} =(\nu_1\delta_{q^{2m},1}+\beta)[m-n]W_{m+n}\\
 \{W_{m},L_{n}\}^{3} =\nu_2[m-n]\delta_{q^{2m},1}L_{m+n}+(\nu_4\delta_{q^{2m},1}+\beta)[m-n]W_{m+n}\\
  \{W_{m},W_{n}\}^{3} =\nu_2\delta_{q^{2m},1}[m-n]W_{m+n},$
 \item
 $\{L_{m},L_{n}\}^{4} =(\nu_1\delta_{q^{2m},1}+\gamma)[m-n]L_{m+n}+\nu_3\delta_{q^{2m},1}[m-n]W_{m+n} \\
 \{L_{m},W_{n}\}^{4} =(\nu_1\delta_{q^{2m},1}+\gamma)[m-n]W_{m+n}\\
 \{W_{m},L_{n}\}^{4} =\nu_2[m-n]\delta_{q^{2m},1}L_{m+n}+(\nu_4\delta_{q^{2m},1}+\beta)[m-n]W_{m+n}\\
  \{W_{m},W_{n}\}^{4} =\nu_2\delta_{q^{2m},1}[m-n]W_{m+n},$
   \item
   \label{inducedHomLeibnizWq:v1}
 $\{L_{m},L_{n}\}^{5} =(\nu_1\delta_{q^{2m},1}+\gamma)[m-n]L_{m+n}+(\nu_3\delta_{q^{2m},1}+\frac{\gamma\theta-\gamma^{2}}{\beta}[m-n]W_{m+n} \\
 \{L_{m},W_{n}\}^{5} =(\nu_1\delta_{q^{2m},1}+\gamma)[m-n]W_{m+n}\\
 \{W_{m},L_{n}\}^{5} =(\nu_2\delta_{q^{2m},1}+\beta)[m-n]L_{m+n}+(\nu_4\delta_{q^{2m},1}+\theta-\gamma)[m-n]W_{m+n}\\
  \{W_{m},W_{n}\}^{5} =(\nu_2\delta_{q^{2m},1}+\beta)[m-n]W_{m+n}$,
\end{enumerate}
  where $\nu_i,\gamma,\theta\in\mathbb{K}~and~\beta\in\mathbb{K}^{\ast}$.
\end{thm}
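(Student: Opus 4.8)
The plan is to apply the Hom-Leibniz construction of Proposition~\ref{prop:Hom-Lieto Hom-Leibniz} to each of the five averaging operators $P_d^{i}$, $1\le i\le 5$, classified in Theorem~\ref{classification 2}, and then to read off the structure constants of the induced bracket $\{x,y\}^{i}=[P_d^{i}(x),y]$ by a direct evaluation on basis elements. Since $\mathcal{W}^{q}$ is a Hom-Lie algebra (Example~\ref{ex2}) and therefore a skewsymmetric Hom-Leibniz algebra, and since each $P_d^{i}$ is by construction an averaging operator (so in particular $\alpha\circ P_d^{i}=P_d^{i}\circ\alpha$, which here forces $q^{d}=1$ by Lemma~\ref{lemma3}), Proposition~\ref{prop:Hom-Lieto Hom-Leibniz}(i) already guarantees that every triple $(\mathcal{W}^{q},\{\cdot,\cdot\}^{i},\alpha)$ is a Hom-Leibniz algebra. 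Hence the Hom-Leibniz identity needs no separate verification, and the entire content of the statement is the explicit computation of the brackets.

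Concretely, I would first record the multiplication data to be used: from \eqref{H-L1} and skew-symmetry, $[L_a,L_b]=[a-b]L_{a+b}$, $[L_a,W_b]=[a-b]W_{a+b}$, $[W_a,L_b]=[a-b]W_{a+b}$ and $[W_a,W_b]=0$. For each $i$ I would substitute the two defining lines of $P_d^{i}$ from Theorem~\ref{classification 2} into $\{x,y\}^{i}=[P_d^{i}(x),y]$, taking $x\in\{L_m,W_m\}$ and $y\in\{L_n,W_n\}$, expand bilinearly, and collect the four brackets $\{L_m,L_n\}^{i}$, $\{L_m,W_n\}^{i}$, $\{W_m,L_n\}^{i}$ and $\{W_m,W_n\}^{i}$. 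This amounts to $5\times 4=20$ elementary evaluations.

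The single reduction doing the real work is $[m+d]=[m]$: because $q^{d}=1$ forces $[d]=0$, the identity $[a+d]=q^{-d}[a]+q^{a}[d]$ from Example~\ref{ex2} collapses to $[a+d]=[a]$, so that with $a=m-n$ each coefficient $[m+d-n]$ produced by $P_d^{i}$ becomes $[m-n]$; one likewise uses $\delta_{q^{2(m+d)},1}=\delta_{q^{2m},1}$. After these simplifications the output vectors, which genuinely lie in degree $m+n+d$, are relabelled to degree $m+n$ exactly as in the Witt computation carried out immediately before this theorem, producing the displayed formulas.

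The main difficulty is thus organizational rather than conceptual: one must keep the bookkeeping straight across the twenty brackets, correctly carrying the Kronecker factors $\delta_{q^{2m},1}$ together with the parameters $\nu_1,\dots,\nu_4,\gamma,\theta,\beta$ attached to the entries $f_i,g_i$ of each operator, and apply the reduction $[m+d-n]=[m-n]$ uniformly. One entry deserving a careful recheck is $\{W_m,W_n\}^{1}$: since $[W_{m+d},W_n]=0$ annihilates the $W$-component of $P_d^{1}(W_m)$, only the term from $[L_{m+d},W_n]=[m-n]W_{m+n}$ survives, so this bracket should land in $W_{m+n}$.
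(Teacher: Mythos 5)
Your proposal is correct and takes essentially the same route as the paper: the paper also relies on Proposition~\ref{prop:Hom-Lieto Hom-Leibniz} for the Hom-Leibniz property and then evaluates $\{x,y\}^{i}=[P_d^{i}(x),y]$ on basis elements (writing out item \ref{inducedHomLeibnizWq:i1} and declaring the rest ``analogous''), using exactly your reduction $[m+d-n]=[m-n]$ from $q^{d}=1$ and the same relabelling of the output index $m+n+d\mapsto m+n$. Your flagged entry is also vindicated by the paper itself: its computation gives $\{W_m,W_n\}^{1}=\nu_2\delta_{q^{2m},1}[m-n]W_{m+n}$, so the $L_{m+n}$ appearing in that line of the theorem statement (and the stray $L$'s in the paper's displayed computation of $\{L_m,W_n\}^{1}$) are typos rather than defects in your argument.
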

\begin{proof}

We demonstrate a proof of \ref{inducedHomLeibnizWq:i}. The others are proved analogously. For any
$m,n\in\mathbb{Z}$,
\begin{align*}
    &\{L_m,L_n\}^{1}=[P_d^{1}(L_m),L_n]= [\nu_1\delta_{q^{2m},1}L_{m+d}+(\nu_3\delta_{q^{2m},1}+\gamma)W_{m+d},L_n]\\
   &\quad=\nu_1[m+d-n]\delta_{q^{2m},1}L_{m+n+d}+(\nu_3\delta_{q^{2m},1}+\gamma)[m+d-n]W_{m+n+d}\\
    &\quad=\nu_1(m-n)\delta_{q^{2m},1}L_{m+n}+(\nu_3\delta_{q^{2m},1}+\gamma)[m-n]W_{m+n},\\
      &\{L_m,W_n\}^{1}=[P_d^{1}(L_m),W_n]= [\nu_1\delta_{q^{2m},1}L_{m+d}+(\nu_3\delta_{q^{2m},1}+\gamma)W_{m+d},W_n] \\
&\quad =\nu_1(m+d-n)\delta_{q^{2m},1}L_{m+n+d}, =\nu_1[m-n]\delta_{q^{2m},1}L_{m+n},\\
    &\{W_m,L_n\}^{1}=[P_d^{1}(W_m),L_n]= [\nu_1\delta_{q^{2m},1}L_{m+d}+\nu_4\delta_{q^{2m},1}W_{m+d},L_n]\\
   &\quad=\nu_2[m+d-n]\delta_{q^{2m},1}L_{m+n+d}+\nu_4\delta_{q^{2m},1}[m+d-n]W_{m+n+d}\\
      &\quad=\nu_2(m-n)\delta_{q^{2m},1}L_{m+n}+\nu_4\delta_{q^{2m},1}[m-n]W_{m+n},\\
           &\{W_m,W_n\}^{1}=[P_d^{1}(W_m),W_n]\\
   &\quad= [\nu_2\delta_{q^{2m},1}L_{m+d}+\nu_4\delta_{q^{2m},1}W_{m+d},W_n]=\nu_2(m+d-n)\delta_{q^{2m},1}W_{m+n+d}\\
 & \hspace{7.2cm} =\nu_2(m-n)\delta_{q^{2m},1}W_{m+n}.
 \qedhere
\end{align*}
\end{proof}
\begin{prop}
The Hom-Leibniz algebras $(\mathcal {W}^{q}, ~\{\cdot,\cdot\}^{i}, \alpha)$ for $i\in\{1,\cdots,5\}$ given in Theorem \ref{thm:Leibniz1} items \ref{inducedHomLeibnizWq:i1}-\ref{inducedHomLeibnizWq:v1} are respectively multiplicatives if and only if  \begin{enumerate}
\item\label{multiplic:i1}
 $q^2=-1$  \text{ or }$ ~\nu_1=\nu_2=\nu_3=\nu_4=\gamma=0;$
\item
 $q^2=-1$;
\item
   $q^2=-1$;
 \item
 $q^2=-1$;
   \item
   \label{multiplic:v}
 $q^2=-1$.
\end{enumerate}
\end{prop}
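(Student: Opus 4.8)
The plan is to verify the multiplicativity condition $\alpha(\{x,y\}^i)=\{\alpha(x),\alpha(y)\}^i$ directly on the homogeneous basis, invoking Theorem \ref{thm:multnonmultcnds} item \ref{thm:i:multnonmultcnds}. Throughout we are in Case 1, so $q\neq\pm1$ and $q^d=1$, and the twisting map $\alpha$ is even and diagonal: writing $c_m=q^m+q^{-m}$, we have $\alpha(L_m)=c_mL_m$ and $\alpha(W_m)=c_mW_m$. Since every product $\{X_m,Y_n\}^i$ with $X,Y\in\{L,W\}$ lands in $\mathcal{W}^q_{m+n}$, on which $\alpha$ acts by the scalar $c_{m+n}$, bilinearity collapses the multiplicativity of each basis pair to
\begin{equation*}
(c_mc_n-c_{m+n})\,\{X_m,Y_n\}^i=0,\qquad\forall\,m,n\in\mathbb{Z}.
\end{equation*}
A one-line expansion gives $c_mc_n-c_{m+n}=q^{m-n}+q^{n-m}$, so, since each product is proportional to $[m-n]$, the whole proposition is governed by the single scalar $(q^{m-n}+q^{n-m})[m-n]$, exactly as in the earlier multiplicativity computation for $(\mathcal{W}^q,[\cdot,\cdot],\alpha)$.

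First I would record the key identity, valid for $q\neq\pm1$,
\begin{equation*}
(q^{p}+q^{-p})[p]=\frac{q^{2p}-q^{-2p}}{q-q^{-1}},\qquad p=m-n,
\end{equation*}
which vanishes for every $p$ if and only if $q^4=1$; together with $q\neq\pm1$ this is precisely $q^2=-1$. Hence, if $q^2=-1$ the governing scalar vanishes identically and all five triples are multiplicative, which settles the ``if'' direction of every item at once (and also the first alternative of item \ref{multiplic:i1}).

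For the converse I assume $q^2\neq-1$, so $q+q^{-1}\neq0$ and in particular $(q^{1}+q^{-1})[1]=q+q^{-1}\neq0$. The strategy is to isolate each structure constant by a well-chosen pair $(m,n)$: taking $m=0$ turns the factors $\delta_{q^{2m},1}$ on (as $\delta_{q^{0},1}=1$) and forces the coefficients of the $\delta$-terms to vanish, while taking $m=1$ (where $q^2\neq1$) turns them off and isolates the $\delta$-free parameters. In items $i=2,3,4,5$, at least one of the four products carries the coefficient $\beta\in\mathbb{K}^{\ast}$ on a $\delta$-free term — for example $\{W_m,W_n\}^5=(\nu_2\delta_{q^{2m},1}+\beta)[m-n]W_{m+n}$, whose $\delta$-free part is $\beta\neq0$ — so that term cannot be cancelled and multiplicativity fails unless the governing scalar vanishes, i.e. unless $q^2=-1$; this yields the stated equivalence for those four items. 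In item $i=1$ every parameter lies in $\mathbb{K}$ and may be zero, so peeling off $\nu_1,\nu_2$ (from $\{L_m,W_n\}^1$ and $\{W_m,W_n\}^1$ at $m=0$), then $\gamma$ (from the $W_{m+n}$-component of $\{L_m,L_n\}^1$ at $m=1$), and then $\nu_3,\nu_4$ (at $m=0$) forces $\nu_1=\nu_2=\nu_3=\nu_4=\gamma=0$, recovering the second alternative of item \ref{multiplic:i1}.

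The main obstacle I anticipate is purely organizational bookkeeping: for each of the five items one must run through all four products $\{L_m,L_n\}^i$, $\{L_m,W_n\}^i$, $\{W_m,L_n\}^i$, $\{W_m,W_n\}^i$ and, in the converse direction, select the pair $(m,n)$ that isolates each coefficient while tracking which $\delta_{q^{2m},1}$ factors are active. No new idea is needed beyond the single scalar identity above, which reduces each verification to elementary root-of-unity arithmetic.
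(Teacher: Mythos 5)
Your proposal is correct and follows essentially the same route as the paper: both reduce multiplicativity on the graded basis to the vanishing of the scalar $(q^{m-n}+q^{n-m})[m-n]=\frac{q^{2(m-n)}-q^{2(n-m)}}{q-q^{-1}}$ against the bracket coefficients, conclude $q^4=1$ (hence $q^2=-1$ since $q\neq\pm1$) when some coefficient survives, and isolate the parameters $\nu_i,\gamma$ in item (i) by switching the factors $\delta_{q^{2m},1}$ on at $m=0$ and off at $m=1$. Your organization is in fact a bit tidier than the paper's (a single governing scalar handled uniformly, and an explicit $\beta$-term argument for items 2--5 where the paper only says ``proved analogously''), but the key identity, the appeal to Theorem \ref{thm:multnonmultcnds} \ref{thm:i:multnonmultcnds}, and the case analysis are the same.
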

\begin{proof}
We prove \ref{multiplic:i1}, the others are proved analogously. For any
$m,n\in\mathbb{Z}$, we have
\begin{align*}
       &\alpha(\{L_m,L_n\}^{1})-\{\alpha(L_m),\alpha(L_n)\}^{1}\\
    &\quad=\alpha\Big(\nu_1\delta_{q^{2m},1}[m-n]L_{m+n}+(\nu_3\delta_{q^{2m},1}+\gamma)[m-n]W_{m+n}\Big)\\&\quad\quad-\{(q^m+q^{-m})L_m,(q^n+q^{-n})L_n\}^{1}\\
    &\quad=\nu_1\delta_{q^{2m},1}[m-n](q^{m+n}+q^{-m-n})L_{m+n}\\&\quad\quad+(\nu_3\delta_{q^{2m},1}+\gamma)[m-n](q^{m+n}+q^{-m-n})W_{m+n}\\
    &\quad\quad-(q^m+q^{-m})(q^n+q^{-n})\Big((\nu_1\delta_{q^{2m},1}+\beta)[m-n]L_{m+n}\\&\quad\quad+(\nu_3\delta_{q^{2m},1}+\gamma)[m-n]W_{m+n}\Big)\\
    &\quad=\nu_1\delta_{q^{2m},1}[m-n](q^{m-n}+q^{n-m})L_{m+n}+(\nu_3\delta_{q^{2m},1}+\gamma)[m-n](q^{m-n}+q^{n-m})W_{m+n}\\
    &\quad =   -(q^{m-n}-q^{n-m})(q^{m-n}+q^{n-m})\big(\frac{\nu_1\delta_{q^{2m},1}}{q-q^{-1}}L_{m+n}-\frac{\nu_3\delta_{q^{2m},1}+\gamma}{q-q^{-1}}W_{m+n}\big) \\
    &\quad=  -(q^{2(m-n)}-q^{2(n-m)})\big(\frac{\nu_1\delta_{q^{2m},1}}{q-q^{-1}}L_{m+n}+\frac{\nu_3\delta_{q^{2m},1}+\gamma}{q-q^{-1}}W_{m+n}\big), \\
     & \alpha(\{L_m,W_n\}^{1})-\{\alpha(L_m),\alpha(W_n)\}^{1}\\
     &\quad=\alpha\Big((\nu_1\delta_{q^{2m},1})[m-n]W_{m+n}-\{(q^m+q^{-m})L_m,(q^n+q^{-n})W_n\}^{2}\\
    &\quad=\nu_1\delta_{q^{2m},1}[m-n](q^{m+n}+q^{-m-n})W_{m+n}-\nu_1\delta_{q^{2m},1}[m-n](q^{m}+q^{-m})(q^{n}+q^{-n})W_{m+n}\\
    &\quad =  -(q^{m-n}-q^{n-m})(q^{m-n}+q^{n-m})\big(\frac{\nu_1\delta_{q^{2m},1}}{q-q^{-1}}W_{m+n}\big)\\
       &\quad =     -(q^{2(m-n)}-q^{2(n-m)})\big(\frac{\nu_1\delta_{q^{2m},1}}{q-q^{-1}}W_{m+n}\big), \\
       &\alpha(\{W_m,L_n\}^{1})-\{\alpha(W_m),\alpha(L_n)\}^{1}\\
    &\quad=\alpha\Big(\nu_2\delta_{q^{2m},1}[m-n]L_{m+n}+\nu_4\delta_{q^{2m},1}[m-n]W_{m+n}\Big)\\&\quad\quad-\{(q^m+q^{-m})W_m,(q^n+q^{-n})L_n\}^{3}\\
    &\quad=\nu_2\delta_{q^{2m},1}[m-n](q^{m+n}+q^{-m-n})L_{m+n}\\&\quad\quad+\nu_4\delta_{q^{2m},1}[m-n](q^{m+n}+q^{-m-n})W_{m+n}\\
    &\quad\quad-(q^m+q^{-m})(q^n+q^{-n})\Big((\nu_2\delta_{q^{2m},1}+\beta)[m-n]L_{m+n}\\&\quad\quad+\nu_4\delta_{q^{2m},1}[m-n]W_{m+n}\Big)\\
    &\quad=\nu_2\delta_{q^{2m},1}[m-n](q^{m-n}+q^{n-m})L_{m+n}+\nu_4\delta_{q^{2m},1}[m-n](q^{m-n}+q^{n-m})W_{m+n}\\
    &\quad =   -(q^{m-n}-q^{n-m})(q^{m-n}+q^{n-m})\big(\frac{\nu_2\delta_{q^{2m},1}}{q-q^{-1}}L_{m+n}-\frac{\nu_4\delta_{q^{2m},1}}{q-q^{-1}}W_{m+n}\big)\\
    &\quad=   -(q^{2(m-n)}-q^{2(n-m)})\big(\frac{\nu_2\delta_{q^{2m},1}}{q-q^{-1}}L_{m+n}+\frac{\nu_4\delta_{q^{2m},1}}{q-q^{-1}}W_{m+n}\big),\\
       & \alpha(\{W_m,W_n\}^{1})-\{\alpha(W_m),\alpha(W_n)\}^{1}\\
     &\quad=\alpha\Big((\nu_2\delta_{q^{2m},1})[m-n]W_{m+n}-\{(q^m+q^{-m})L_m,(q^n+q^{-n})W_n\}^{4}\\
    &\quad=\nu_2\delta_{q^{2m},1}[m-n](q^{m+n}+q^{-m-n})L_{m+n}-\nu_2\delta_{q^{2m},1}[m-n](q^{m}+q^{-m})(q^{n}+q^{-n})L_{m+n}\\
    &\quad = -(q^{m-n}-q^{n-m})(q^{m-n}+q^{n-m})\big(\frac{\nu_2\delta_{q^{2m},1}}{q-q^{-1}}L_{m+n}\big)\\
          &\quad =     -(q^{2(m-n)}-q^{2(n-m)})\big(\frac{\nu_2\delta_{q^{2m},1}}{q-q^{-1}}L_{m+n}\big).
\end{align*}
So, by Theorem \ref{thm:multnonmultcnds} \ref{thm:i:multnonmultcnds},
\begin{align*}
& \text{$(\mathcal {W}^{q}, \{\cdot,\cdot\}^1, \alpha)$ is multiplicative}\  \Leftrightarrow \\
& \forall\ m,n\in\mathbb{Z}:\
\left\{\begin{array}{llll} q^{2(m-n)}-q^{2(n-m)}=0, or\\ \nu_1 \delta_{q^{2m},1}=\nu_3 \delta_{q^{2m},1}+\gamma=\\\nu_1 \delta_{q^{2m},1}=\nu_2 \delta_{q^{2m},1}=\nu_4 \delta_{q^{2m},1}=0,
\end{array}\right.,\  \Leftrightarrow \\[0.2cm]
& \forall\ m,n\in\mathbb{Z}:\
\left\{\begin{array}{llll} q^{4(m-n)}=0, or\\ \nu_1=\nu_2=\nu_3=\nu_4=\gamma=0,
\end{array}\right.,
\Leftrightarrow\\[0.2cm]
& \quad\quad\quad\quad\quad\quad
\left\{\begin{array}{llll} \forall p\in\mathbb{Z},~q^{4p}=0, or\\ \nu_1=\nu_2=\nu_3=\nu_4=\gamma=0,
\end{array}\right.,\Leftrightarrow\\[0.2cm]
&\quad\quad\quad\quad \quad\quad\left\{\begin{array}{llll}q^{4}=1, or\\ \nu_1=\nu_2=\nu_3=\nu_4=\gamma=0,
\end{array}\right.,
\Leftrightarrow\\[0.2cm]
&\quad\quad\quad\quad \quad\quad\left\{\begin{array}{llll} q^{2}=-1, or\\\nu_1=\nu_2=\nu_3=\nu_4=\gamma=0,
\end{array}\right.
\Leftrightarrow\\[0.2cm]
& q=\pm i \ \text{if}\ \exists \ i\in \mathbb{K}: \ i^2=-1 \
\text{(for example if $\mathbb{K}$ is algebraically closed)}, \\& or~~\nu_1=\nu_2=\nu_3=\nu_4=\gamma=0.
\qedhere
\end{align*}
\end{proof}
\subsection*{Case 2: $(q,d)\in \{1\}\times\mathbb{Z}\cup \{-1\}\times 2\mathbb{Z}$}
\begin{rmk}
The equations \eqref{first eq}-\eqref{fourteenth eq} are equivalents for $(q,d)\in \{1\}\times\mathbb{Z}$ and for $(q,d)\in \{-1\}\times 2\mathbb{Z}$.
\end{rmk}
\subsubsection*{Subcase 1: $m=0$ and $d=0$}
  Take $n=0$ in \eqref{first eq}-\eqref{fourteenth eq}. For $d=0$ and $m=0$, the functions $f_1,~f_2,~g_1$ and $g_2$ satisfy
$$
f_1(0)=\nu_1,\ f_2(0)=\nu_2,\ g_1(0)=\nu_3,\ g_2(0)=\nu_4,\ \nu_i\in\mathbb{K}.
$$
Then we have the following Proposition.
\begin{prop}\label{prop1:q=1,-1}
If $P_0$ is an averaging operator on $\mathcal{W}^{q}$ with degree $d=0$, then
$$
\left\{
  \begin{array}{ll}
  f_1(0)=\nu_1, &
    f_2(0)=\nu_2,\\
     g_1(0)=\nu_3,&
     g_2(0)=\nu_4,
  \end{array}\right.
\quad \text{where $\nu_i\in\mathbb{K}.$} $$
  \end{prop}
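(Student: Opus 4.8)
The plan is to read off the values at the origin directly from the averaging identities, exploiting the fact that the structure constants of $\mathcal{W}^q$ degenerate there. Following the hypotheses of this subcase, I would specialize each of the identities \eqref{first eq}--\eqref{fourteenth eq} by first putting $n=0$, and then imposing $m=0$ together with $d=0$; this is precisely the configuration that governs the four quantities $f_1(0),f_2(0),g_1(0),g_2(0)$, which are exactly the data determining $P_0$ on the degree-zero subspace $\mathcal{W}^q_0=\mathrm{span}_{\mathbb{K}}\{L_0,W_0\}$.

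The key observation is that every monomial occurring in \eqref{first eq}--\eqref{fourteenth eq} carries exactly one $q$-number factor, each of which has one of the forms $[m-n]$, $[m+d-n]$, $[n-m]$, or $[n-m-d]$. When $m=n=0$ and $d=0$ all four arguments equal $0$, and $[0]=0$ (immediate from the defining formula, or from the relation $[-n]=-[n]$ listed in Example \ref{ex2}, which forces $2[0]=0$ in characteristic zero). Hence every term vanishes and each identity collapses to the tautology $0=0$, so the averaging axiom \eqref{avopaxiom} imposes no relation among $f_1(0),f_2(0),g_1(0),g_2(0)$. The only other defining condition, the commutativity $\alpha\circ P_0=P_0\circ\alpha$, is automatic here: by Lemma \ref{lemma3} it amounts to $q^d=1$, and $q^0=1$ for every admissible $q$. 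Consequently the four values at $0$ are entirely unconstrained and may be taken to be arbitrary scalars $\nu_1,\nu_2,\nu_3,\nu_4\in\mathbb{K}$, which is precisely the assertion.

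There is no substantive obstacle, since no genuine computation survives the specialization; this proposition merely records the free initial data at the origin on which the later subcases will build. The one point demanding care is bookkeeping: one must confirm that \emph{each} term of \emph{every} one of the identities \eqref{first eq}--\eqref{fourteenth eq} really does carry one of the above vanishing factors, so that no hidden constraint remains. A direct inspection settles this, because each bracket of $\mathcal{W}^q$ producing these identities contributes exactly one factor $[\,\cdot\,]$ whose argument is a difference of the indices $m,n,d$, and the subsequent application of $P_d$ introduces only the scalars $f_i,g_i$ and no further $q$-numbers.
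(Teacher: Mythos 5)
Your proposal is correct and follows essentially the same route as the paper: the paper's (implicit) justification is precisely the specialization $n=0$, then $m=0$, $d=0$ in \eqref{first eq}--\eqref{fourteenth eq}, under which every term vanishes because each carries a factor $[0]=0$, so the averaging axiom leaves $f_1(0),f_2(0),g_1(0),g_2(0)$ unconstrained. Your additional remarks --- the explicit bookkeeping that every monomial carries exactly one vanishing $q$-number factor, and that commutativity with $\alpha$ is automatic since $q^0=1$ --- only make explicit what the paper leaves tacit.
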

\subsubsection*{Subcase 1: $m\neq0$ and $d=0$}
Taking $n=0$ in \eqref{nineth eq} we give
$f_2(m)(f_2(0)-f_2(m))=0$. Hence,
\begin{equation*}
f_2(m)=\mu_1 f_2(0),\quad\mu_1\in\{0,1\}.
\end{equation*}
Then, we have the following Proposition.
\begin{prop}\label{prop2:q=1,-1}
If $P_0$ is an averaging operators on $\mathcal{W}^{q}$ with degree $d=0$ such that $m\neq 0$ and $f_2(0)=0,$
then
\begin{enumerate}
    \item if $f_1(0)=0$, then
$f_1(m)=0,\
  f_2(m)=0,\
     g_1(m)=\gamma,\
  g_2(m)= 0,
$
where $\gamma\in\mathbb{K}$;
  \item  if $f_1(0)\neq 0$, then
\begin{enumerate}
  \item
 $ f_1(m)=f_1(0),\
  f_2(m)=0,\
     g_1(m)=\gamma,\
  g_2(m)= 0$, where $\gamma\in\mathbb{K};$
  \item
  $
  f_1(m)=f_1(0),\
  f_2(m)=0,\
     g_1(m)=0,\
  g_2(m)=f_1(0)$, where $\gamma\in\mathbb{K};$
  \item
  $
  f_1(m)=0,\
  f_2(m)=0,\
     g_1(m)=\gamma,\
  g_2(m)= f_1(0)$, where $\gamma\in\mathbb{K}.$
\end{enumerate}
  \end{enumerate}
\end{prop}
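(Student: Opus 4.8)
The plan is to mirror, essentially line for line, the proof of Proposition~\ref{prop2:q neq 1,-1}, because the hypotheses of this subcase reproduce exactly the same two simplifications of the structure equations \eqref{first eq}--\eqref{fourteenth eq} that were available there. In Proposition~\ref{prop2:q neq 1,-1} the assumption $q^{d}=1$ was used to collapse every $q$-number shift by $d$, and $q^{2m}\neq 1$ was used to secure $[m]\neq 0$ before dividing. Here the identical roles are filled by $d=0$, which makes the shifts vanish outright so that $[m+d]=[m]$, $[-m-d]=[-m]=-[m]$ and $[n-m-d]=[n-m]$, and by the hypothesis $q=\pm 1$ together with $m\neq 0$, which gives $[m]\neq 0$ by the clause of the $q$-number list in Example~\ref{ex2} stating that for $q=\pm 1$ one has $[n]=0$ only for $n=0$. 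Once these two facts are in place the entire computation transcribes without change.

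Concretely, I would first recall the specialization of \eqref{nineth eq} at $n=0$ carried out just above, which together with $f_{2}(0)=0$ forces $f_{2}(m)=\mu_{1}f_{2}(0)=0$. Setting $n=0$ in \eqref{first eq}, cancelling the (now trivial) $d$-shift, dividing by $[m]\neq 0$ and inserting $f_{2}(m)=0$, reduces that equation to $f_{1}(m)\big(f_{1}(0)-f_{1}(m)\big)=0$, so $f_{1}(m)=\mu_{2}f_{1}(0)$ with $\mu_{2}\in\{0,1\}$. Repeating the same reduction on \eqref{fourteenth eq} at $n=0$ and using $f_{2}(m)=0$ yields $g_{2}(m)\big(g_{2}(m)-f_{1}(0)\big)=0$, hence $g_{2}(m)=\mu_{3}f_{1}(0)$ with $\mu_{3}\in\{0,1\}$, and in particular $g_{2}(m)=0$ when $f_{1}(0)=0$.

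The closing step is the specialization of \eqref{second eq} at $n=0$: after the same cancellations and the substitutions $f_{1}(m)=\mu_{2}f_{1}(0)$, $g_{2}(m)=\mu_{3}f_{1}(0)$, it takes the form $(\mu_{2}+\mu_{3}-1)f_{1}(0)\,g_{1}(m)=\mu_{2}f_{1}(0)\,g_{1}(0)$. If $f_{1}(0)=0$ this is vacuous, $g_{1}$ is unconstrained, and one reads off the first alternative with $g_{1}(m)=\gamma$. If $f_{1}(0)\neq 0$ one divides by $f_{1}(0)$; the pattern $\mu_{2}=1,\mu_{3}=0$ forces $g_{1}(0)=0$, and sorting the admissible pairs gives $g_{1}(m)=0$ for $(\mu_{2},\mu_{3})\in\{(0,0),(1,1)\}$ and $g_{1}(m)=\gamma$ free for $(\mu_{2},\mu_{3})\in\{(1,0),(0,1)\}$, which reproduces alternatives (a), (b) and (c).

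The calculations are routine transcriptions, so I expect the only real obstacle to be making sure the two reductions are legitimately justified in this new regime: the cancellation of the shift is now an identity (because $d=0$) rather than a consequence of $q^{d}=1$, and the division by $[m]$ now rests on $m\neq 0$ with $q=\pm 1$ rather than on $q^{2m}\neq 1$, so one must cite the right line of Example~\ref{ex2}. As in Proposition~\ref{prop2:q neq 1,-1}, only the $n=0$ specializations of \eqref{nineth eq}, \eqref{first eq}, \eqref{fourteenth eq} and \eqref{second eq} are needed to pin down $f_{1},f_{2},g_{1},g_{2}$, which is why the resulting families coincide exactly with those of that proposition.
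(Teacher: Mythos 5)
Your proof follows the paper's own argument essentially line for line: the same $n=0$ specializations of \eqref{nineth eq}, \eqref{first eq}, \eqref{fourteenth eq} and \eqref{second eq}, the same parametrization $f_2(m)=\mu_1 f_2(0)$, $f_1(m)=\mu_2 f_1(0)$, $g_2(m)=\mu_3 f_1(0)$, and the same closing case analysis on $f_1(0)$ and $(\mu_2,\mu_3)$. The one discrepancy is in your favor: for the step that pins down $g_2$, the paper cites \eqref{thirteenth eq}, but the equation it actually displays and uses there (the one containing the $g_1$ and $g_2^2$ terms) is \eqref{fourteenth eq}, which is the label you correctly cite.
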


\begin{proof}
Let $m\neq 0$, $d=0$ and $f_2(0)=0$ as assumed.
Taking $n=0$ in \eqref{first eq} we obtain
 $f_1(m)(f_1(0)-f_1(m))=0$. Thus,
$f_1(m)=\mu_2 f_1(0)$, $\mu_2\in\{0,1\}$. Setting $n=0$ in \eqref{thirteenth eq} yields
\begin{align*}
mf_2(m)g_1(0)=&-mg_2(m)f_1(0)+mf_2(m)g_1(m)+mg_2^{2}(m).
\end{align*}
Since $m\neq 0$, we have $f_2(m)g_1(0)=g_2(m)f_1(0)-f_2(m)g_1(m)-g_2^{2}(m)$, from which together with $f_2(m)=0$, we get
$$g_2(m)=\mu_3 f_1(0), \quad \mu_3\in\{0,1\}.$$
Taking $n=0$ in \eqref{second eq} and
from $m\neq 0$ we get $f_1(m)g_1(0)=-f_1(0)g_1(m)+f_1(m)g_1(m)+g_1(m)g_2(m)$, and with $f_1(m)=\mu_2 f_1(0)$ and $g_2(m)=\mu_3 f_1(0)$, we obtain $g_1(m)(\mu_2+\mu_3-1)f_1(0)=\mu_2 f_1(0)g_1(0)$. Then, we have the two cases:
\begin{enumerate}
    \item
    if $f_1(0)=0$, then
    $g_1(m)=\gamma$,
    \item  if $f_1(0)\neq0$ we have $\mu_2 g_1(0)=(\mu_2+\mu_3-1)g_1(m)$. Then, \\
        $\left\{
  \begin{array}{ll}
  g_1(m)=0 & \ \text{if} \ (\mu_2,\mu_3)\in\{(0,0),(1,1)\},\\
 g_1(m)=\gamma & \ \text{if} \ (\mu_2,\mu_3)\in\{(1,0),(0,1)\}.
  \end{array}\right.$
\qedhere
\end{enumerate}
\end{proof}
\begin{prop}\label{prop3:q=1,-1}
If $P_d$ is the averaging operator on $\mathcal{W}^{q}$ with degree $d=0$ such that $m\neq 0$ and $f_2(0)\neq0$, then
$
\left\{
  \begin{array}{ll}
  f_1(m)=\gamma, & f_2(m)=f_2(0), \\
     g_1(m)=\frac{\gamma f_1(0)-\gamma^{2}}{f_2(0)}, & g_2(m)= f_1(0)-\gamma,
  \end{array}\right.
$
where $\gamma\in\mathbb{K}$.
\end{prop}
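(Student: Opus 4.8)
The plan is to carry out the proof in complete parallel with that of Proposition~\ref{prop3:q neq 1,-1}, exploiting the Remark that the defining equations \eqref{first eq}--\eqref{fourteenth eq} coincide for $(q,d)\in\{1\}\times\mathbb{Z}$ and for $(q,d)\in\{-1\}\times 2\mathbb{Z}$. The only structural change is that the hypothesis $q^{2m}\neq 1$ used there is replaced by $m\neq 0$: when $q=\pm 1$ we have $[m]=0$ if and only if $m=0$, so $m\neq 0$ is exactly what is needed to cancel a common factor $[m]$ from each specialized equation. Throughout I set $d=0$, which collapses $[m+d-n]$ to $[m-n]$ and $[n-m-d]$ to $[n-m]=-[m-n]$, so that after factoring out $[m-n]$ each equation reduces to its $q=1$ form.

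I would begin from the relation $f_2(m)=\mu_1 f_2(0)$ with $\mu_1\in\{0,1\}$, already extracted from \eqref{nineth eq} at $n=0$, and since $f_2(0)\neq 0$ single out the branch $\mu_1=1$, i.e. $f_2(m)=f_2(0)$. First I would put $n=0$ in \eqref{fifth eq}; dividing by $[m]\neq 0$ gives $f_1(m)\big(f_2(0)-f_2(m)\big)=0$, which on the branch $\mu_1=1$ leaves $f_1(m)=\gamma$ as a free scalar. Next, $n=0$ in \eqref{tenth eq} yields $f_2(m)g_2(0)=-f_2(0)g_2(m)+f_2(m)g_2(m)$; substituting $f_2(m)=f_2(0)\neq 0$ forces $g_2(0)=0$. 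Then $n=0$ in \eqref{first eq} gives $f_1(m)f_1(0)=f_1^2(m)+g_1(m)f_2(m)$, and solving for $g_1(m)$ with $f_1(m)=\gamma$, $f_2(m)=f_2(0)$ produces $g_1(m)=\dfrac{\gamma f_1(0)-\gamma^2}{f_2(0)}$. Finally $n=0$ in \eqref{fourteenth eq} gives $f_2(m)f_1(0)=f_2(m)f_1(m)+g_2(m)f_2(m)$; cancelling $f_2(m)=f_2(0)\neq 0$ and using $f_1(m)=\gamma$ yields $g_2(m)=f_1(0)-\gamma$. Collecting these four identities is exactly the asserted description.

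The main point requiring care is the justification of the Remark's equivalence in the $q=-1$ case, on which the whole transfer rests. Here I would record that for $q=-1$ one has $[k]=(-1)^{k-1}k$, so $[m-n]=-(-1)^{m-n}(m-n)$ and $[n-m]=(-1)^{m-n}(m-n)=-[m-n]$; with $d=0$ (hence $d\in 2\mathbb{Z}$) the sign prefactor $(-1)^{m-n}$ is common to every $q$-bracket occurring in \eqref{first eq}--\eqref{fourteenth eq} and therefore factors out, leaving precisely the $q=1$ equations. This is what makes the single computation above valid simultaneously for $(q,d)=(1,0)$ and $(q,d)=(-1,0)$. Everything else is the same routine linear bookkeeping in the scalars $f_i(0),g_i(0)$ and $\gamma$ as in Proposition~\ref{prop3:q neq 1,-1}, with the hypothesis $f_2(0)\neq 0$ being what legitimizes each cancellation.
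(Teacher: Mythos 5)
Your proof is correct and follows essentially the same route as the paper's own proof: the identical $n=0$ specializations of \eqref{nineth eq}, \eqref{fifth eq}, \eqref{tenth eq}, \eqref{first eq} and \eqref{thirteenth eq} (which, like the paper, you cite as \eqref{fourteenth eq}), with cancellation of $[m]\neq 0$ for $m\neq 0$ playing exactly the role that $q^{2m}\neq 1$ plays in Proposition~\ref{prop3:q neq 1,-1}, plus a justification of the paper's unproved Remark for $q=-1$ that the paper itself omits. The only caveat --- shared with the paper, whose statement makes the same implicit restriction --- is that ``since $f_2(0)\neq 0$ single out the branch $\mu_1=1$'' is a choice of branch rather than an inference, since $f_2(0)\neq 0$ does not by itself exclude $f_2(m)=0$ for $m\neq 0$ (that degenerate branch survives as the family $P_0^{1}$ of Theorem~\ref{classification 2:q-1,-1}).
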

  \begin{proof}
Let $d=0,~~m\neq 0$ and $f_2(0)\neq 0,$ as assumed.
Taking $n=0$ in \eqref{fifth eq} yields
$mf_1(m)f_2(0)=mf_1(m)f_2(m)$. Since $m\neq 0$ we have
$f_1(m)f_2(0)=f_1(m)f_2(m)$.
This together with $f_2(m)=\mu_1 f_2(0)$ gives $f_1(m)(\mu_1-1)=0$. Then
$
f_1(m)=\left\{
\begin{array}{lllll}
 0&if&\mu=0, \\
 \gamma &if& \mu_1=1.
\end{array}\right.
$
Taking $n=0$ in \eqref{tenth eq} yields
   $ mf_2(m)g_2(0)=-mf_2(0)g_2(m)+mf_2(m)g_2(m).$
Since $m\neq 0$, we have
$f_2(m)g_2(0)=-f_2(0)g_2(m)+f_2(m)g_2(m)$.
This, with $f_2(m)=\mu_1 f_2(0)$, gives
$\mu_1 f_2(0)g_2(0)=-f_2(0)g_2(m)+\mu_1 f_2(0) g_2(m)$. Then
\begin{enumerate}
    \item if $\mu_1=0$ we have $g_2(m)=0$,
    \item if $\mu_1=1$ we have $g_2(0)=0$.
\end{enumerate}
Taking $n=0$ in \eqref{first eq} yields
   $ f_1(m)f_1(0)=mf_1^{2}(m)+mg_1(m)f_2(m).$
Since $m\neq 0$, we have
$f_1(m)f_1(0)=f_1^{2}(m)+g_1(m)f_2(m)$.
This, with $f_2(m)=\mu_1 f_2(0)$, $f_1(m)=\gamma$ and $\mu_1=1$, yields $g_1(m)=\frac{\gamma f_1(0)-\gamma^{2}}{f_2(0)}$.

Taking $n=0$ in \eqref{sixth eq} yields
$
    mf_1(m)g_2(0)=-mf_2(0)g_1(m)+mf_1(m)g_2(m).$
Since $m\neq0$ we have
$ f_1(m)g_2(0)=-f_2(0)g_1(m)+f_1(m)g_2(m)$.
This together with $g_2(0)=0$ and $f_1(m)=g_2(m)=0$ for $\mu_1=0$ gives $g_1(m)=0$.

Taking $n=0$ in the equation \eqref{fourteenth eq} yields
$
    mf_2(m)f_1(0)=mf_2(m)f_1(m)+mg_2(m)f_2(m).
$
Then
$ f_2(m)f_1(0)=f_2(m)f_1(m)+g_2(m)f_2(m)$.
This, together with $f_1(m)=\gamma$ for $\mu_1=1$, gives $g_2(m)=f_1(0)-\gamma$.
\end{proof}

\begin{thm}\label{classification 2:q-1,-1}
The Homogeneous averaging operators on the $q$-deformed $W(2,2)$ Hom-algebra $\mathcal{W}^{q}$ with degree $d=0$.
must be one of the following operators, given for all $m\in \mathbb{Z}$, by
\begin{align*}
& \left\{
  \begin{array}{lllll}
 P_0^{1}(L_m)=\nu_1\delta_{m,0}L_{m}+(\nu_3\delta_{m,0}+\gamma)W_{m},\\
P_0^{1}(W_m)=\nu_2\delta_{m,0}L_{m}+\nu_4\delta_{m,0}W_{m},
  \end{array}\right.\\[0.3cm]
&\left\{
  \begin{array}{lllll}
 P_0^{2}(L_m)=(\nu_1\delta_{m,0}+\beta)L_{m}+(\nu_3\delta_{m,0}+\gamma)W_{m},\\
P_0^{2}(W_m)=\nu_2\delta_{m,0}L_{m}+\nu_4\delta_{m,0}W_{m},
  \end{array}\right.\\[0.3cm]
& \left\{
  \begin{array}{lllll}
 P_0^{3}(L_m)=(\nu_1\delta_{m,0}+\beta)L_{m}+\nu_3\delta_{m,0}W_{m},\\
P_0^{3}(W_m)=\nu_2\delta_{m,0}L_{m}+(\nu_4\delta_{m,0}+\beta)W_{m},
  \end{array}\right.\\[0.3cm]
&\left\{
  \begin{array}{l}
 P_0^{4}(L_m)=(\nu_1\delta_{m,0}+\gamma)L_{m}+\nu_3\delta_{m,0}W_{m}\\
P_0^{4}(W_m)=\nu_2\delta_{m,0}L_{m}+(\nu_4\delta_{m,0}+\beta)W_{m},
  \end{array}\right.\\[0.3cm]
& \left\{
  \begin{array}{lllll}
 P_0^{5}(L_m)=(\nu_1\delta_{m,0}+\gamma)L_{m}+(\nu_3\delta_{m,0}+\frac{\gamma \theta-\gamma^{2}}{\beta})W_{m},\\
P_0^{5}(W_m)=(\nu_2\delta_{m,0}+ \beta)L_{m}+(\nu_4\delta_{m,0}+\theta-\gamma)W_{m}.
  \end{array}\right.
\end{align*}
where $\gamma,\theta,\nu_1,\nu_2,\nu_3,\nu_4\in\mathbb{K}$ and $\beta\in\mathbb{K}^{\ast}$.
\end{thm}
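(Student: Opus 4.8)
The plan is to assemble the classification from the three structural propositions already established, exactly as was done for Theorem~\ref{classification 2}, but now exploiting the simplification afforded by $d=0$. First I would invoke the Remark preceding this statement, which tells us that the defining system \eqref{first eq}--\eqref{fourteenth eq} takes the same shape for $(q,d)\in\{1\}\times\mathbb{Z}$ and for $(q,d)\in\{-1\}\times 2\mathbb{Z}$; this lets me treat both signs of $q$ uniformly and reduces the problem to solving that single system for the four coordinate functions $f_1,f_2,g_1,g_2$. Since here $d=0$, the commutativity axiom is automatic: $q^d=q^0=1$, so Lemma~\ref{lemma3} guarantees $\alpha\circ P_0=P_0\circ\alpha$ with no extra constraint, and the entire content of being an averaging operator is encoded in \eqref{first eq}--\eqref{fourteenth eq}.

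Next I would partition the analysis by the value of $m$ and of $f_2(0)$, which is precisely the trichotomy handled by Propositions~\ref{prop1:q=1,-1}, \ref{prop2:q=1,-1} and \ref{prop3:q=1,-1}. Proposition~\ref{prop1:q=1,-1} records that at $m=0$ the four values $f_1(0),f_2(0),g_1(0),g_2(0)$ are completely free, since every equation in \eqref{first eq}--\eqref{fourteenth eq} acquires a vanishing $q$-number factor there; these free constants are what the symbols $\nu_1,\nu_2,\nu_3,\nu_4$ denote, and they enter the final formulas only through the $\delta_{m,0}$ terms. For $m\neq 0$, Proposition~\ref{prop2:q=1,-1} (the regime $f_2(0)=0$) and Proposition~\ref{prop3:q=1,-1} (the regime $f_2(0)\neq 0$) pin down $f_i(m),g_i(m)$ up to the binary parameters $\mu_1,\mu_2,\mu_3\in\{0,1\}$ and the scalars $\gamma$, $\theta$ and $\beta\in\mathbb{K}^{\ast}$, where in the last regime $\theta$ and $\beta$ play the roles of $f_1(0)$ and $f_2(0)$ respectively.

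Finally I would merge the $m=0$ data with the $m\neq 0$ data into a single pair of formulas of the form \eqref{W operators 1}--\eqref{W operators 2}, writing each coordinate function as its generic ($m\neq 0$) value plus a $\delta_{m,0}$-supported correction carrying the corresponding free constant $\nu_i$ from Proposition~\ref{prop1:q=1,-1}. Running through the admissible combinations of $\mu_1,\mu_2,\mu_3$ in Propositions~\ref{prop2:q=1,-1}--\ref{prop3:q=1,-1} then produces the displayed families $P_0^1,\dots,P_0^5$: the operator $P_0^5$ is the one coming from the regime $f_2(0)\neq 0$, with its characteristic entries $g_1(m)=(\gamma\theta-\gamma^2)/\beta$ and $g_2(m)=\theta-\gamma$, while $P_0^1,\dots,P_0^4$ exhaust the regime $f_2(0)=0$.

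The routine part is the substitution $n=0$ that drives each proposition. I expect the only genuine obstacle to be the bookkeeping in this merging step: one must verify that the choices of $(\mu_1,\mu_2,\mu_3)\in\{0,1\}^3$, once intersected with the non-degeneracy requirement $\beta\in\mathbb{K}^{\ast}$, map onto exactly the five displayed families with neither omission nor redundancy, and that appending the free $m=0$ corrections $\nu_i\delta_{m,0}$ is consistent with every mixed instance of \eqref{first eq}--\eqref{fourteenth eq} in which one index vanishes and the other does not. Once this compatibility is confirmed, the classification follows by combining Lemma~\ref{lemma3} with Propositions~\ref{prop1:q=1,-1}--\ref{prop3:q=1,-1}, in direct parallel with the proof of Theorem~\ref{classification 2}.
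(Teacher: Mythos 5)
Your proposal takes essentially the same route as the paper: the paper's entire proof is ``directly by combining Lemma~\ref{lemma3} and Propositions~\ref{prop1:q=1,-1}--\ref{prop3:q=1,-1}'', which is exactly the assembly you describe --- commutativity is automatic since $q^{d}=q^{0}=1$, the $m=0$ values become the free constants $\nu_{1},\dots,\nu_{4}$ entering through $\delta_{m,0}$, and the $m\neq 0$ regimes $f_{2}(0)=0$ and $f_{2}(0)\neq 0$ yield the families $P_{0}^{1},\dots,P_{0}^{4}$ and $P_{0}^{5}$ respectively, with $\theta,\beta$ standing for $f_{1}(0),f_{2}(0)$. Your closing caveat about checking the mixed instances of \eqref{first eq}--\eqref{fourteenth eq} when exactly one index vanishes is a verification the paper leaves entirely implicit, so if anything your outline is slightly more careful than the published proof while following the identical strategy.
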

\begin{proof}
Directly by ombining Lemma \ref{lemma3} and Propositions \ref{prop1:q=1,-1}-\ref{prop3:q=1,-1}.
\end{proof}
\begin{thm}\label{thm:Leibniz}
The homogeneous averaging operators on the $q$-deformed $W(2,2)$ Hom-algebra $\mathcal{W}^{q}$ with of degree $d=0$ obtained in Theorem \ref{classification 2} provide the following Hom-Leibniz algebras
on the underlying linear space $\mathcal{W}^{q}:$
\begin{enumerate}
\item\label{inducedHomLeibnizWq:i}
  $\{L_{m},L_{n}\}^{1} =\nu_1\delta_{m,0}[m-n]L_{m+n}+(\nu_3\delta_{m,0}+\gamma)[m-n]W_{m+n} \\
 \{L_{m},W_{n}\}^{1} =\nu_1\delta_{m,0}[m-n]W_{m+n}\\
 \{W_{m},L_{n}\}^{1} =\nu_2[m-n]\delta_{m,0}L_{m+n}+\nu_4\delta_{m,0}[m-n]W_{m+n}\\
  \{W_{m},W_{n}\}^{1} =\nu_2\delta_{m,0}[m-n]L_{m+n},$
\item
  $\{L_{m},L_{n}\}^{2} =(\nu_1\delta_{m,0}+\beta)[m-n]L_{m+n}+(\nu_3\delta_{m,0}+\gamma)[m-n]W_{m+n} \\
 \{L_{m},W_{n}\}^{2} =(\nu_1\delta_{m,0}+\beta)[m-n]W_{m+n}\\
 \{W_{m},L_{n}\}^{2} =\nu_2[m-n]\delta_{m,0}L_{m+n}+\nu_4\delta_{m,0}[m-n]W_{m+n}\\
  \{W_{m},W_{n}\}^{2} =\nu_2\delta_{m,0}[m-n]W_{m+n},$
\item
$\{L_{m},L_{n}\}^{3} =(\nu_1\delta_{m,0}+\beta)[m-n]L_{m+n}+\nu_3\delta_{m,0}[m-n]W_{m+n} \\
 \{L_{m},W_{n}\}^{3} =(\nu_1\delta_{m,0}+\beta)[m-n]W_{m+n}\\
 \{W_{m},L_{n}\}^{3} =\nu_2[m-n]\delta_{m,0}L_{m+n}+(\nu_4\delta_{m,0}+\beta)[m-n]W_{m+n}\\
  \{W_{m},W_{n}\}^{3} =\nu_2\delta_{m,0}[m-n]W_{m+n},$
 \item
 $\{L_{m},L_{n}\}^{4} =(\nu_1\delta_{m,0}+\gamma)[m-n]L_{m+n}+\nu_3\delta_{m,0}[m-n]W_{m+n} \\
 \{L_{m},W_{n}\}^{4} =(\nu_1\delta_{m,0}+\gamma)[m-n]W_{m+n}\\
 \{W_{m},L_{n}\}^{4} =\nu_2[m-n]\delta_{m,0}L_{m+n}+(\nu_4\delta_{m,0}+\beta)[m-n]W_{m+n}\\
  \{W_{m},W_{n}\}^{4} =\nu_2\delta_{m,0}[m-n]W_{m+n},$
   \item
   \label{inducedHomLeibnizWq:v}
 $\{L_{m},L_{n}\}^{5} =(\nu_1\delta_{m,0}+\gamma)[m-n]L_{m+n}+(\nu_3\delta_{m,0}+\frac{\gamma\theta-\gamma^{2}}{\beta}[m-n]W_{m+n} \\
 \{L_{m},W_{n}\}^{5} =(\nu_1\delta_{m,0}+\gamma)[m-n]W_{m+n}\\
 \{W_{m},L_{n}\}^{5} =(\nu_2\delta_{m,0}+\beta)[m-n]L_{m+n}+(\nu_4\delta_{m,0}+\theta-\gamma)[m-n]W_{m+n}\\
  \{W_{m},W_{n}\}^{5} =(\nu_2\delta_{m,0}+\beta)[m-n]W_{m+n}$,
\end{enumerate}
  where $\nu_i,\gamma,\theta\in\mathbb{K}~and~\beta\in\mathbb{K}^{\ast}$.
\end{thm}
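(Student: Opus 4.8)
The plan is to invoke Proposition~\ref{prop:Hom-Lieto Hom-Leibniz}, according to which the Hom-Leibniz bracket induced by an averaging operator $P$ on the Hom-Lie algebra $\mathcal{W}^q$ is $\{x,y\}=[P(x),y]$, and then to evaluate this bracket on the homogeneous basis $\{L_m,W_m\mid m\in\mathbb{Z}\}$ for each of the five operators $P_0^1,\dots,P_0^5$ classified in Theorem~\ref{classification 2:q-1,-1}. Because the degree is $d=0$, every shift $m+d$ collapses to $m$, so the grading introduces no complication and the whole argument reduces to substituting the explicit form of each $P_0^i$ and expanding by bilinearity of $[\cdot,\cdot]$.

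First I would record the four structure relations needed from \eqref{H-L1}, namely $[L_m,L_n]=[m-n]L_{m+n}$, $[L_m,W_n]=[m-n]W_{m+n}$, $[W_m,L_n]=[m-n]W_{m+n}$ (the last obtained from skew-symmetry together with the $q$-number identity $[n-m]=-[m-n]$), and $[W_m,W_n]=0$. With these at hand, each of the four brackets $\{L_m,L_n\}^i$, $\{L_m,W_n\}^i$, $\{W_m,L_n\}^i$, $\{W_m,W_n\}^i$ becomes a one-line expansion. I would then carry out item~\ref{inducedHomLeibnizWq:i} in full as the representative case: writing $P_0^1(L_m)=\nu_1\delta_{m,0}L_m+(\nu_3\delta_{m,0}+\gamma)W_m$, bilinearity gives
\[
\{L_m,L_n\}^1=\nu_1\delta_{m,0}[L_m,L_n]+(\nu_3\delta_{m,0}+\gamma)[W_m,L_n],
\]
and inserting the structure relations produces exactly $\nu_1\delta_{m,0}[m-n]L_{m+n}+(\nu_3\delta_{m,0}+\gamma)[m-n]W_{m+n}$; the remaining three brackets of item~1, and then those attached to $P_0^2,\dots,P_0^5$, follow by the identical substitution-and-expansion, so I would merely note that they are verified analogously.

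The computation carries no genuine obstacle; it is purely mechanical, exactly as in the proof of the degree-$d$ analogue Theorem~\ref{thm:Leibniz1}. The only points demanding care are the consistent application of skew-symmetry to rewrite $[W_m,L_n]$ as $[m-n]W_{m+n}$, and the faithful bookkeeping of which scalar among $\nu_1,\dots,\nu_4,\gamma,\theta,\beta$ multiplies each basis vector, so that all the coefficients across the five items are transcribed without error.
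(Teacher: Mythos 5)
Your proposal is correct and takes essentially the same route as the paper's own proof: both invoke Proposition~\ref{prop:Hom-Lieto Hom-Leibniz} to form $\{x,y\}=[P_0^i(x),y]$, substitute the explicit operators $P_0^1,\dots,P_0^5$ from Theorem~\ref{classification 2:q-1,-1}, expand by bilinearity using the structure relations \eqref{H-L1} together with $[W_m,L_n]=[m-n]W_{m+n}$ from skew-symmetry and $[W_m,W_n]=0$, carry out item~\ref{inducedHomLeibnizWq:i} in full, and dismiss the remaining four operators as analogous. If anything, your expansion is the more reliable one, since it makes clear that $\{L_m,W_n\}^1$ and $\{W_m,W_n\}^1$ must land on $W_{m+n}$, whereas the printed statement of $\{W_m,W_n\}^1$ and the printed proof's line for $\{L_m,W_n\}^1$ each contain an $L_{m+n}$ that should read $W_{m+n}$.
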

\begin{proof}
We demonstrate a proof of \ref{inducedHomLeibnizWq:i}. The others are proved analogously. For any
$m,n\in\mathbb{Z}$,
\begin{align*}
    &\{L_m,L_n\}^{1}=[P_0^{1}(L_m),L_n]= [\nu_1\delta_{m,0}L_{m}+(\nu_3\delta_{m,0}+\gamma)W_{m},L_n]\\
   &\quad=\nu_1[m-n]\delta_{m,0}L_{m+n}+(\nu_3\delta_{m,0}+\gamma)[m-n]W_{m+n},\\
      &\{L_m,W_n\}^{1}=[P_d^{1}(L_m),W_n]= [\nu_1\delta_{m,0}L_{m}+(\nu_3\delta_{m,0}+\gamma)W_{m},W_n] \\
&\quad =\nu_1[m-n]\delta_{m,0}L_{m+n},\\
    &\{W_m,L_n\}^{1}=[P_0^{1}(W_m),L_n]= [\nu_1\delta_{m,0}L_{m}+\nu_4\delta_{m,0}W_{m},L_n]\\
      &\quad=\nu_2(m-n)\delta_{m,0}L_{m+n}+\nu_4\delta_{m,0}[m-n]W_{m+n},\\
           &\{W_m,W_n\}^{1}=[P_0^{1}(W_m),W_n]\\
   &\quad= [\nu_2\delta_{m,0}L_{m}+\nu_4\delta_{m,0}W_{m},W_n]=\nu_2(m-n)\delta_{m,0}W_{m+n}.
\qedhere
\end{align*}
\end{proof}

\begin{prop}
The Hom-Leibniz algebras $(\mathcal {W}^{q}, ~\{\cdot,\cdot\}^{i}, \alpha)$ induced by $P_{0}^{i}$ for all $i\in\{1,\dots, 5\}$ is multiplicative if and only if $i=1$ and $\nu_1=\nu_2=\nu_3=\nu_4=\gamma=0.$
\end{prop}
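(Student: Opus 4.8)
The plan is to apply Theorem \ref{thm:multnonmultcnds} \ref{thm:i:multnonmultcnds}, which reduces multiplicativity of $(\mathcal{W}^q,\{\cdot,\cdot\}^i,\alpha)$ to checking, on every pair of homogeneous basis elements $X_m\in\{L_m,W_m\}$ and $Y_n\in\{L_n,W_n\}$, the identity $\alpha(\{X_m,Y_n\}^i)=\{\alpha(X_m),\alpha(Y_n)\}^i$. The decisive simplification is that we are in Case 2, where $q=\pm1$, so $q^{-1}=q$ and the twisting map acts on each homogeneous component $\mathcal{W}^q_k=\mathrm{span}_{\mathbb{K}}\{L_k,W_k\}$ as the scalar $q^k+q^{-k}=2q^k$; that is, $\alpha|_{\mathcal{W}^q_k}=2q^k\,\mathrm{id}$.

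With this, I would compute the defect directly. Using $\alpha(X_m)=2q^mX_m$ and $\alpha(Y_n)=2q^nY_n$, bilinearity of $\{\cdot,\cdot\}^i$ gives $\{\alpha(X_m),\alpha(Y_n)\}^i=4q^{m+n}\{X_m,Y_n\}^i$, while $\{X_m,Y_n\}^i\in\mathcal{W}^q_{m+n}$ yields $\alpha(\{X_m,Y_n\}^i)=2q^{m+n}\{X_m,Y_n\}^i$; hence
\[
\alpha(\{X_m,Y_n\}^i)-\{\alpha(X_m),\alpha(Y_n)\}^i=-2q^{m+n}\{X_m,Y_n\}^i .
\]
Because $q\neq0$, the coefficient $-2q^{m+n}$ never vanishes, so the defect is zero for all $m,n$ if and only if $\{X_m,Y_n\}^i=0$ for all $m,n$. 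Thus $(\mathcal{W}^q,\{\cdot,\cdot\}^i,\alpha)$ is multiplicative precisely when the induced bracket $\{\cdot,\cdot\}^i$ is identically zero. This is exactly where the present case parts ways with the earlier $q\neq\pm1$ computation: there the analogous coefficient is $-(q^{m-n}+q^{n-m})$, which can vanish through $q^2=-1$, whereas here $q^{m-n}+q^{n-m}=2q^{m-n}\neq0$, so no such rescue is available.

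It then remains to read off, from the explicit brackets in Theorem \ref{thm:Leibniz}, when $\{\cdot,\cdot\}^i\equiv0$. For $i\in\{2,3\}$ the $L$-component of $\{L_m,L_n\}^i$ is $(\nu_1\delta_{m,0}+\beta)[m-n]$, and for $i\in\{4,5\}$ the $L$-component of $\{W_m,L_n\}^i$ is $(\nu_2\delta_{m,0}+\beta)[m-n]$; evaluating at $m=1,\ n=0$ gives $\beta[1]=\beta\neq0$ (recall $\beta\in\mathbb{K}^{\ast}$ and $[1]=1$ for $q=\pm1$), so these brackets are nonzero and the algebra is not multiplicative for $i=2,3,4,5$. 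For $i=1$ every structure coefficient carries a factor $\delta_{m,0}$ except the $\gamma$-term in the $W$-component of $\{L_m,L_n\}^1$: evaluating that component at $m=1,\ n=0$ forces $\gamma=0$, and then evaluating the remaining components at $m=0,\ n=1$ (where $\delta_{m,0}=1$ and $[-1]=-1$) forces $\nu_1=\nu_2=\nu_3=\nu_4=0$; conversely, with all these scalars zero every $\{\cdot,\cdot\}^1$ bracket vanishes. This yields multiplicativity exactly for $i=1$ with $\nu_1=\nu_2=\nu_3=\nu_4=\gamma=0$.

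The argument is essentially free of hard obstacles once the scalar action of $\alpha$ on homogeneous components is noted; the only point requiring care is the bookkeeping of which term in each $\{\cdot,\cdot\}^i$ retains a nonzero $\beta$ (respectively $\gamma$ or $\nu_j$) coefficient after the $\delta_{m,0}$ factors are stripped, together with the choice of evaluation points $(m,n)$ that isolate those coefficients.
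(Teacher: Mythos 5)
Your proof is correct and follows essentially the same route as the paper: both invoke Theorem \ref{thm:multnonmultcnds} \ref{thm:i:multnonmultcnds}, observe that for $q=\pm1$ the twisting map acts as the nonzero scalar $2q^{m+n}$ on $\mathcal{W}^q_{m+n}$ so that the defect $\alpha(\{X_m,Y_n\}^i)-\{\alpha(X_m),\alpha(Y_n)\}^i$ equals $-2q^{m+n}\{X_m,Y_n\}^i$, and hence reduce multiplicativity to the identical vanishing of the induced bracket, which yields $\nu_1=\nu_2=\nu_3=\nu_4=\gamma=0$ for $i=1$ and is impossible for $i\in\{2,\dots,5\}$ because $\beta\in\mathbb{K}^{\ast}$; your packaging is just slightly more unified, treating all $i$ and all basis pairs by one scalar computation where the paper writes out the four defects for $i=1$ and dismisses $i\geq 2$ as ``similar''. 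One bookkeeping slip: for $i=4$ the coefficient $(\nu_2\delta_{m,0}+\beta)[m-n]$ you cite does not occur in the $L$-component of $\{W_m,L_n\}^4$, which is $\nu_2\delta_{m,0}[m-n]$; the $\beta$ sits in the $W$-component $(\nu_4\delta_{m,0}+\beta)[m-n]$, but the same evaluation $(m,n)=(1,0)$ still gives $\{W_1,L_0\}^4=\beta W_1\neq 0$, so your conclusion is unaffected.
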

\begin{proof}
For any
$m,n\in\mathbb{Z}$, we have
\begin{align*}
       &\alpha(\{L_m,L_n\}^{1})-\{\alpha(L_m),\alpha(L_n)\}^{1}\\
    &\quad=\alpha\Big(\nu_1\delta_{m,0}[m-n]L_{m+n}+(\nu_3\delta_{m,0}+\gamma)[m-n]W_{m+n}\Big)\\&\quad\quad-\{2 q^{m}L_m,2q^{N}L_n\}^{1}\\
    &\quad=-2q^{m+n}[m-n](\nu_1\delta_{m,0}L_{m+n}+(\nu_3\delta_{m,0}+\gamma)W_{m+n})\\
     & \alpha(\{L_m,W_n\}^{1})-\{\alpha(L_m),\alpha(W_n)\}^{1}\\
     &\quad=\alpha\Big((\nu_1\delta_{m,0})[m-n]W_{m+n}-\{2q^m L_m,2q^n W_n\}^{1}\\
    &\quad=2 q^{m+n}\nu_1\delta_{m,0}[m-n]W_{m+n}-4 q^{m+n}\nu_1\delta_{m,0}[m-n]W_{m+n}\\
    &\quad=-2 q^{m+n}\nu_1\delta_{m,0}[m-n]W_{m+n}\\
       &\alpha(\{W_m,L_n\}^{1})-\{\alpha(W_m),\alpha(L_n)\}^{1}\\
    &\quad=2 q^{m+n}[m-n](\nu_2\delta_{m,0}L_{m+n}+\nu_4 \delta_{m,0} W_{m+n}) -4 q^{m+n}[m-n](\nu_2\delta_{m,0}L_{m+n}+\nu_4 \delta_{m,0} W_{m+n})\\
       & \alpha(\{W_m,W_n\}^{1})-\{\alpha(W_m),\alpha(W_n)\}^{1}\\
     &\quad=-2 q^{m+n}\nu_2\delta_{m,0}[m-n] L_{m+n}.
\end{align*}
So, by Theorem \ref{thm:multnonmultcnds} \ref{thm:i:multnonmultcnds},

\text{$(\mathcal {W}^{q}, \{\cdot,\cdot\}^1, \alpha)$ is multiplicative}\  $\Leftrightarrow \forall m\in\mathbb{Z},~
\nu_1\delta_{m,0}=\nu_3 \delta_{m,0}+\gamma=\nu_2 \delta_{m,0}=\nu_4 \delta_{m,0}=0\Leftrightarrow \nu_1=\nu_2=\nu_3=\nu_4=\gamma=0.$

Similarly, for all $i\in\{2,\dots,5\}$ we prove that the Hom-Leibniz algebras $(\mathcal {W}^{q}, \{\cdot,\cdot\}^i, \alpha)$ are not multiplicative.
\qedhere
\end{proof}

\end{document}